\documentclass[11pt,a4paper,reqno]{amsart}
\usepackage[english]{babel}
\usepackage{amsmath}
\usepackage{amsfonts}
\usepackage{amssymb}
\usepackage{mathrsfs}
\usepackage{latexsym}
\usepackage{natbib}
\usepackage[margin=3cm]{geometry}

\usepackage{color}

\usepackage{graphicx,color}

\usepackage[plainpages=false,colorlinks,hyperindex,bookmarksopen,linkcolor=black,citecolor=blue,urlcolor=blue]{hyperref}
\newcommand*\bigcdot{\mathpalette\bigcdot@{.3}}

\bibpunct{[}{]}{;}{n}{,}{,}

\theoremstyle{theorem}

\newtheorem{theorem}{\sc Theorem}[section]

\newtheorem{definition}{\sc Definition}
\newtheorem{corollary}{\sc Corollary}[section]
\newtheorem{remark}{\sc Remark}
\newtheorem{assumption}{\sc Assumption}
\newtheorem*{notation}{\sc Notation}

\title{Earthquake modelling via Brownian motions on networks}

\author{F. Colantoni}
\address{Department of Basic and Applied Sciences for Engineering,\newline  \indent Sapienza University of Rome, Italy}
\email{fausto.colantoni@uniroma1.it}
\author{M. D'Ovidio}
\address{Department of Statistical Sciences,\newline \indent Sapienza University of Rome, Italy}
\email{mirko.dovidio@uniroma1.it}
\author{F. Tavani}
\address{National Institute of Geophysics and Volcanology, Rome, Italy} 
\email{flavia.tavani@ingv.it}

\begin{document}

\maketitle

\begin{abstract}
We provide a general model for Brownian motions on metric graphs with interactions. In a general setting, for (sticky) Brownian propagations on edges, our model provides a characterization of lifetimes and holding times on vertices in terms of (jumping) Brownian accumulation of energy associated with that vertices.  Propagation and accumulation are given by drifted Brownian motions subjected to non-local (also dynamic) boundary conditions. As the continuous (sticky) process approaches a vertex, then the right-continuous process has a restart (resetting), it jumps randomly away from the zero-level of energy. According with this new energy, the continuous process can start (or not) as a new process in a randomly chosen edge. We provide a self-contained presentation with a detailed construction of the model. The model well extends to a higher order of interactions, here we provide a simple case and focus on the analysis of earthquakes. Earthquakes are notoriously difficult to study. They build up over long periods and release energy in seconds. Our goal is to introduce a new model, useful in many contexts and in particular in the difficult attempt to manage seismic risks.
\end{abstract}

\vspace{1cm}

\tableofcontents

{\bf Keywords (MSC):}
60J60, 
60J55,
60J70,
35R11.

{\bf Keywords:}
{drifted Brownian motions},
{dynamic boundary value problems},
{Feller-Wentzell conditions},
{non-local operators},
{non-local boundary conditions},
{metric graphs},
{earthquakes}.

\section{Introduction}

To introduce the subject of our paper, first suppose we can conveniently change a given problem into a different, hopefully simpler, form involving star graphs, that is a network. Then, consider interacting Brownian motions on networks. We are interested in the characterization of holding times (including trap vertices), hitting times (of vertices), sojourn time (on subsets of the network) and lifetimes (on the network). Brownian motions together with interaction rules between edges and vertices describe the dynamic we are interested in. A trap vertex is a vertex in which a Brownian motion can spend an infinite average of time, this obviously includes the case of absorption. An example of application can be given by Data/Traffic flows, vertices represent servers/cities and edges represent connections, the holding time in a given vertex describes a (processing, queuing, transmission, propagation) delay in packet switching networks or waiting time due to traffic lights (and congestion) in the context of vehicles.     

We provide a self-contained presentation of a simple model for the applied sciences. The model can be considered in many contexts in which the observations are affected by some Gaussian noise, for example
\begin{align}
    \mathcal{T}(t)  + \textrm{noise}(t), \quad t \in \{t_i, \, i =1, \ldots, n\}
    \label{ASSobservations}
\end{align}
where $\mathcal{T}$ describes some underlying trend (signal).  As the sample size increases, that is, as $n \to \infty$ we have superposition of Gaussian signals. Then, we consider Brownian motions. Since our model is based on Markov processes we are able to reconstruct data around a sample of observations. This is the case, for example, of generative models as diffusion models in which data are progressively noised (by a diffusion process) and subsequently noise is transformed into new data (by the reverse process). Earthquakes cannot currently be systematically predicted, however generative models can help in terms of prior events/information announcing earthquakes. A further example of application is given by the analysis of the seismic signals in terms of Earthquake Monitoring Networks which refers to a system of interconnected seismic sensors and data processing centres designed to analyse earthquakes. 

There are many possible applications of our model, we decided to bring our attention to the analysis of the energy accumulation and wave propagation in the earthquakes analysis. We fix the following general rules:
\begin{itemize}
\item Brownian motions associated with vertices provide energy;
\item Brownian motions associated with edges provide propagation;    
\item Brownian motions can not run simultaneously for a vertex and an incident edge.
\end{itemize}
We also allow for interaction and fix the following further rule: 
\begin{itemize}
\item Holding times on vertices for Brownian motion associated with edges correspond to running times for Brownian motions associated with vertices.
\end{itemize}
Further specification of the previous rules are possible. We can relax or stiffen that rules with great flexibility. Moreover, with the help of the It\^{o}'s formula for example, transformations of Brownian motions can be considered for general noises in \eqref{ASSobservations}. As we focus on the earthquakes modelling, many difficulties occur facing with collection, topography of the involved area, ground data limitation and time constraint. Thus, noise becomes the big box in which we store all the unknown  quantities and the latent phenomena. Since we aim to describe the wave propagation and the energy accumulation, for the observation above in a very simple model, we assume that $\mathcal{T}(t)=\mu t$ where $\mu$ can be the magnitude $m$ or the velocity $v$ respectively for the energy accumulation or the seismic wave propagation. The noise given by a Brownian motion or its transformation can be also associated with measurement errors as time passes. We stress the fact that the Brownian motion does not describe an earthquake, it helps to understand \eqref{ASSobservations} from the observations.  

\section{An overview of the model}

\subsection{Recurrence}
We consider the Brownian model for recurrent earthquakes. This idea has been introduced in \cite{ellsworth} to describe the probability model for rupture times on a recurrent earthquake source. An earthquake happens according with a load state $\mathfrak{E}_t$, that is a physical quantity like elastic strain, or cumulative stress. Let $\mathfrak{e}_0$ be the ground state and define $\mathfrak{e}_\delta$ as the final state, $\mu>0$ as the mean loading rate. That is $\mathfrak{e}_\delta = \mathfrak{e}_0 + \mu \delta$ and $\delta=\delta(t)$ depends on the time in which an event occurs. Under the (deterministic) relaxation time $r(t)$ before $t>0$, we can introduce the deterministic description of the relaxation oscillator
\begin{align}
    \mathfrak{e}_0 + \mu (t-r(t)), \quad t\geq 0.
\end{align}
Random perturbations lead to the stochastic relaxation oscillator
\begin{align}
    \mathfrak{e}_0 + \mu (\mathfrak{E}_t - \mathfrak{E}_{R_t} ), \quad t\geq 0
\end{align}
where $R_t := \max \{T_k\,:\, k \geq 0,\, T_k \leq t\}$ and $T_k := \min\{t\,:\, t\geq 0,\, \mathfrak{E}_t \geq k \delta\}$ is the first time the load state $\mathfrak{E}_t$ exceeds the level $k\delta$. Rupture is assumed to occur when the process reaches a critical-failure threshold. An earthquake relaxes the load state to a characteristic ground level and begins a new failure cycle. The load-state process is a Brownian relaxation oscillator and the Gaussian distribution of the increments motivates via central-limit arguments the fact that perturbations can be regarded as the sum of many small, independent  contributions.

Further on we refer to the energy  $E$ (see Section \ref{sec:EnergyAccumulation}).  As in \cite{ellsworth} the load state process is subjected to a stochastic restart (or reset).

\subsection{Occurrence}
\label{sec:occurrence}
We describe here the occurrence of a seismic sequence in a specific geographical region. In particular, we illustrate the construction of an earthquake network via earthquake occurrence.

\begin{definition}
We call {\it mainshock} the largest and most significant event of the sequence, while the {\it aftershocks} are smaller tremors that follow, often near the same fault line. The magnitude and frequency of the aftershocks gradually decrease over time.
\end{definition}

Let us consider a rectangular spatial region $D = [a, b] \times[c,d]$ , where $[a,b]$ is an interval of longitudes and $[c,d]$ is an interval of latitudes. Assume that 
the mainshock of a sequence having local magnitude equal to $m_0$ happens at time $t_0$ having epicenter with coordinates $(x_0,y_0)$. Let us $D_0 = [a_0, b_0] \times [c_0,d_0] \subset D$ be a small region around the epicenter. We assume that a mainshock can be followed by other events, the $i$-th event on the region $D_i \subset D$ has magnitudes $m_i$, $i=1.2.\ldots, k$.  

\begin{definition}
For the $i$-th event on $D_i$, we define the epicenter $\{x_i,y_i\}\in D_i$ near $D_0$ for $i=1,...,k$. Thus, $\{D_i\}_{i=1}^k$ are the areas influenced by what happens inside $D_0$. For the sake of simplicity, we consider $D_i\cap D_j = \emptyset$ for any $i,j=0,...k$, $i\neq j$. 
\end{definition}
 
To build up the model, we need to briefly introduce the mathematical definition of graph from a theoretical point of view. A graph, or network, $\mathcal{G}=(\mathcal{V},\mathcal{E})$ is an object defined by a set of vertices (also called nodes) $\mathcal{V}$, and a set of edges $\mathcal{E}$ representing the existing connection among pair of vertices. Given a vertex $v \in \mathcal{V}$, its neighbors are the nodes connected with it by an edge. The set of the edges $\mathcal{E}$ can be written as the pairs $(v_i,v_j)$; a graph is defined as undirected if these couples are unordered, otherwise it is directed. The sequence of edges connecting any pair of vertices is called path. Here we consider the trees, a type of undirected graphs in which any two vertices are connected by exactly one path. In a tree, usually, one of the vertices is designed as the root. A parent of a vertex $v$ is a vertex $u$ directly connected to it by a path to the root, and $v$ is said to be a child of $u$. In a tree, a leaf is a node without children. We define the star graphs as a specific type of trees.
\begin{definition}
   A star graph $S_k$ is a tree with a root $r$, $k$ leaves, $k+1$ nodes.
\end{definition}

In our case we present the interaction between the areas $D_i$ modeling them using a star graph $S_k$. The vertices of the network represent the regions $D_i$, with $i=0,\dots, k$, while the edges between them represent the influence among different sub-regions $D_i$ with $i\in[0,....,k]$. We will consider the area $D_0$ where the mainshock occurs as the root of the $S_k$. The other regions $D_i$ with $i=1,\dots, k$ are the nearby areas where aftershocks can take place. Geographically, we can imagine that $D_i$ are  areas (here simply considered as rectangular regions) located on the faults adjacent to the one which triggered the mainshock. A simplified representation of this structure is provided in Fig. $\ref{fig:step_one}$, where we draw $D_0$ and all $D_i$ with $i=1,...,5$. This representation does not show the real position on the map of the possible influenced areas, not considering the geology of the fault plane in central Italy where the earthquakes can occur. \\

\begin{figure}
\centering
\begin{minipage}{.5\textwidth}
  \centering
  \includegraphics[width=1.1\linewidth]{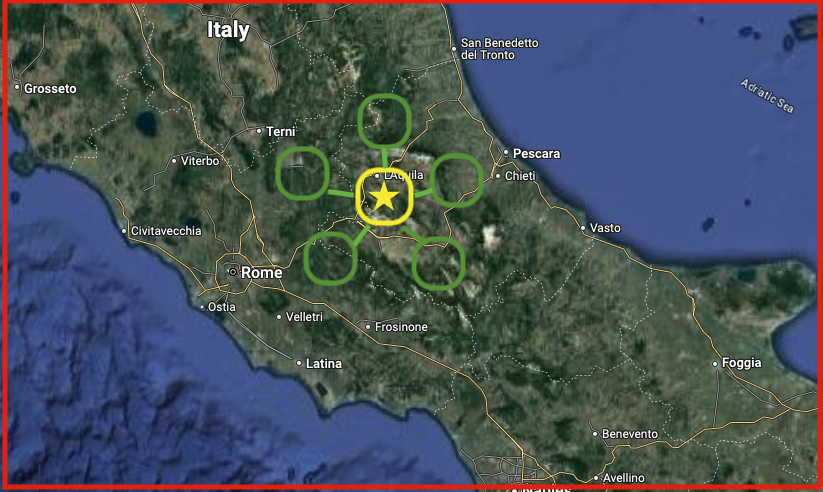}
\end{minipage}%
\begin{minipage}{.5\textwidth}
  \centering
  \includegraphics[width=.8\linewidth]{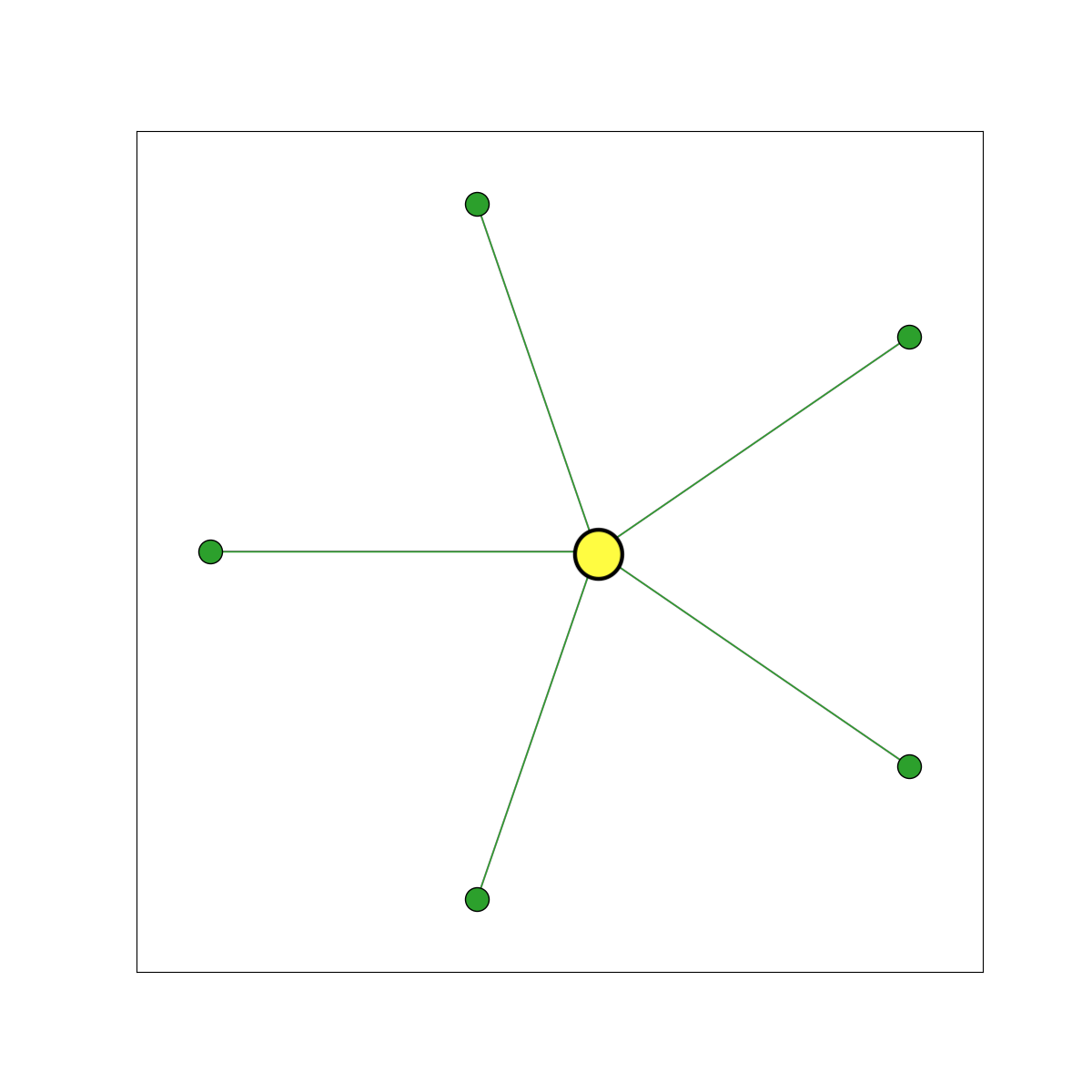}
\end{minipage}
\caption{Structure of the graph $S_k$ representing the areas $\{D_i\}_{i=0}^k$. In this case $k=5$. Left panel: example of a distribution of the regions on a map. The area $D_0$, in yellow, is the one where the mainshock -showed with a star - occurs. This phenomenon can have influence on other $5$ areas around $D_0$, colored in green.  In red it is outlined the  area $D$ taken into account for the  study of seismicity. Right panel:  representation as a star network $S_5$. The root of the graph is the region $D_0$ - in yellow - and the other nodes - in green - are the areas $D_i$, $i=1,...,5$.}
\label{fig:step_one}
\end{figure}

We can proceed one step further, assuming that a possible occurrence of an earthquake inside any area $D_i$ with $i=1,..., k$ may generate new events inside $D_0$, showing the mutual interaction between the regions. Moreover, it can also cause events in other $k$ nearby areas $D_j$ with $j=1,..., k$. In fact, following the ETAS model ($\cite{etas-ogata-88}$, $\cite{etas-ogata-89}$, $\cite{etas-ogata-98}$, $\cite{etas-ogata-99}$), any aftershock, in turns, can cause other subsequent events. These new earthquakes can take place in other regions there around. We assume that the occurrences in each area in $S_k$ can affect what happens inside new $k$ regions. Thus, we add to the graph $S_k$ new nodes and edges, connecting each leaf in $S_k$ to other $k$ vertices, considering them as mutual influences between areas as shown in Fig. $\ref{fig:step_two}$.

\begin{figure}
\centering
\begin{minipage}{.5\textwidth}
\centering
\includegraphics[width=1.1\linewidth]{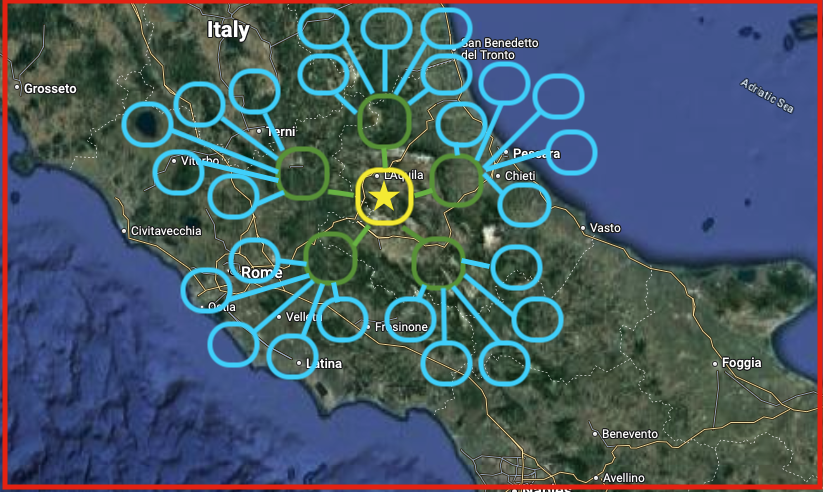}
\end{minipage}%
\begin{minipage}{.5\textwidth}
\centering
\includegraphics[width=0.8\linewidth]{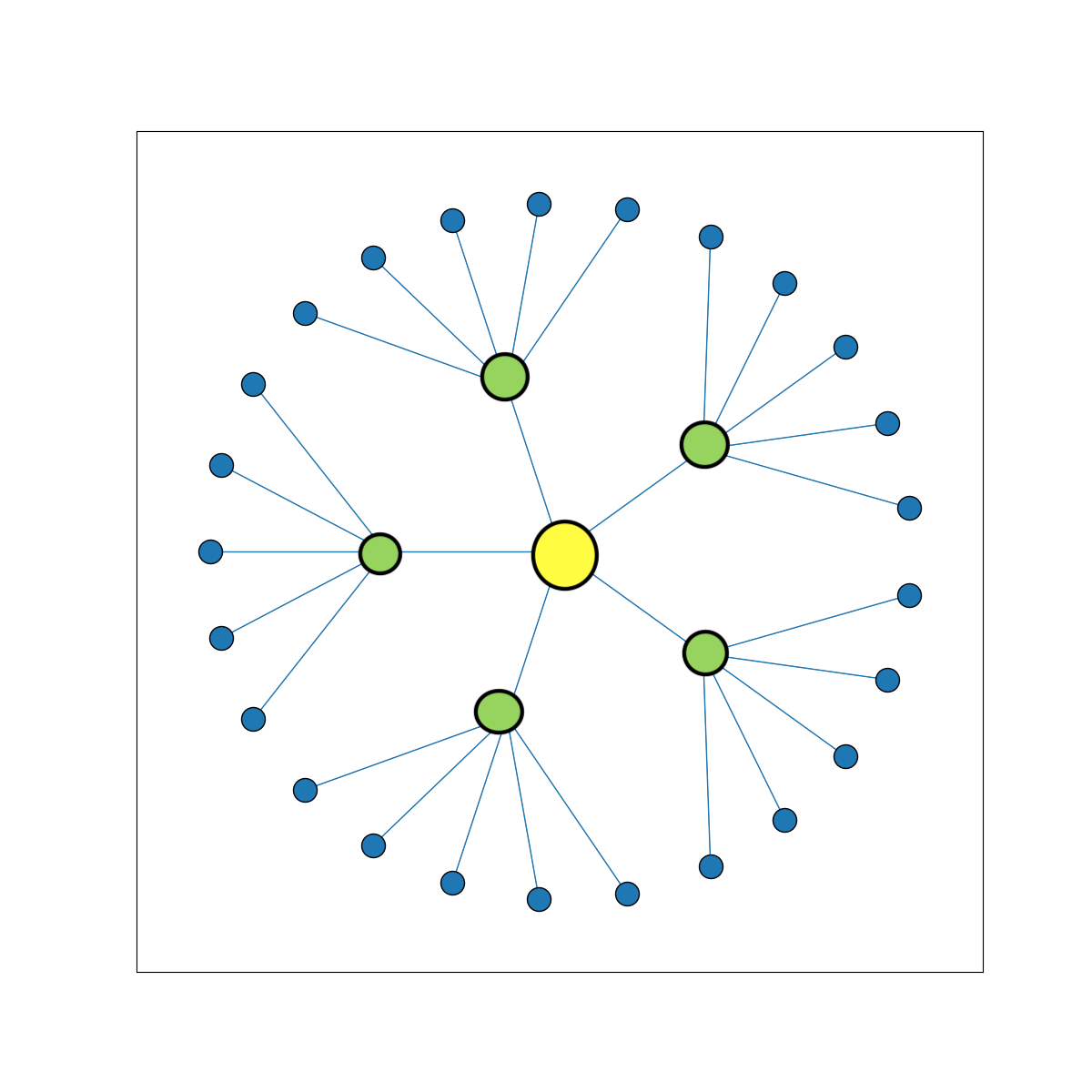}
\end{minipage}
\caption {Structure of the graph $S_k$ representing the areas $\{D_i\}_{i=0}^{1+k+k^2}$. In this case $k=5$. Left panel: example of a distribution of the regions on a map. The area $D_0$, in yellow, is the one where the mainshock -showed with a star - occurs. This phenomenon can have influence on other $5$ areas around $D_0$, colored in green. In this plot they are drawn all around $D_0$, but this representation doesn't show the real geology of the fault plane in central Italy where the earthquakes can occur. In red it is outlined the  area $D$ taken into account for the  study of seismicity. Right panel:  general representation of as a network. The root of the graph is the region $D_0$ - in yellow - and the other nodes are the areas $D_i$, $i=1,...,30$.}
\label{fig:step_two}
\end{figure}

\subsection{Brownian motions}
The description of an earthquake is carried out via suitable combination of Brownian motion functionals. As mentioned above, an earthquake can not be directly described by a Brownian motion. However, Brownian motions can be considered to describe some features of an earthquake.     

A new earthquake occurs as the stress of the tectonic plates overcomes their friction, so there is a release of energy in waves that travel through the earth's crust.
\begin{definition}
We define a load state $E$ as the level of cumulative elastic strain in the lithosphere that creates the seismic wave $W$.
\end{definition}
The	load state (some ground level immediately after an event) increases steadily over time, reaches a failure threshold,  and relaxes instantaneously back to ground level at the next earthquake time. We mainly deal under the assumption that a drifted Brownian motion
\begin{align*}
    B^E_t - mt 
\end{align*}
can be considered to describe the release of the accumulated energy $E=\{E_t\}_{t\geq0}$ and a drifted Brownian motion 
\begin{align*}
 B^W_t + vt   
\end{align*}
can be considered to describe the wave propagation $W=\{W_t\}_{t\geq 0}$ along a selected direction represented by an edge. In particular, given the sequences $E^n$ and $W^n$ together with the times $\tau^{E^n}$ and $\tau^{W^n}$, we write
\begin{align}
\mathbf{E} [f(E^n_t), t < \tau^{E^n}] \to  \mathbf{E}[ f(B^E_t - mt), t < \tau^E], \quad f \in C_b(\mathbb{R})
\label{EtoRDBM}
\end{align}
and
\begin{align}
\mathbf{E} [f(W^n_t), t < \tau^{W^n}] \to \mathbf{E} [f(B^W_t + vt), t < \tau^W], \quad f \in C_b(\mathbb{R})
\label{WtoRDBM}
\end{align}
as the sample size $n \to \infty$. Here, $\tau^{E^n}$ is the time to reach the failure threshold and $\tau^{W^n}$ is the time for the next earthquake. We therefore justify via central-limit arguments that small and independent contributions, as Gaussian increments, provide a mathematical model for $E$ and $W$. The drift $m$ represents a magnitude in a given region. The drift $v$ represents the velocity for the observed region: this physical quantity rules how fast the seismic waves propagate, depending on the Earth's internal structure, material composition and physical states. The velocity $v$ changes as the waves traverse different areas around the world and different Earth's layers. The speed differs according to each layer's properties and also to the respective temperature, composition and pressure.

We provide a description of an earthquake on a region associated with $S_k$. In particular, we consider the couple $(E,W)$ characterizing the earthquake and provide a description via reflected drifted Brownian motions associated with vertices and edges.


%

\subsection{The Mathematical setting}

 In order to streamline the notation as much as possible we write
\begin{align*}
\dot{u} = \frac{\partial u}{\partial t}, \quad u^\prime = \frac{\partial u}{\partial x}, \quad u^{\prime \prime} = \frac{\partial^2 u}{\partial x^2}.
\end{align*}
The symbols $D^\Phi_t$, $D^\Psi_t$, $\mathbf{D}^\Upsilon_x$ denote the non-local (integral) operators we deal with, we provide a detailed discussion in Section \ref{app:NLOs}. The operators $\mathsf{G}_\mu$ and $G_\mu$ will be properly defined below respectively together with the generator of $\mathsf{Q}$ on the graph $\mathsf{S}$ (see Section \ref{sec:BMonG}) and the generator of $X^\mu$ on the half line $[0, \infty)$ (see Section \ref{sec:RDBM}).\\ 

Our model relies on the fact that \lq\lq\textit{Brownian motions on metric graphs are useful in describing some earthquake features}\rq\rq. Based on this, we provide a construction in terms of the process $\mathsf{Q}$ on the star graph $\mathsf{S}_\nu$ driven by the problem
\begin{equation}
\left\lbrace
\begin{array}{ll}
\displaystyle \dot{u}(t, \mathsf{x}) = \mathsf{G}_\mu u(t, \mathsf{x}), & \mathsf{x} \in \mathsf{S} \setminus \{\mathsf{v}\}, \quad \mu>0,\\
\\
\displaystyle \eta \,D^\Phi_t u(t, \mathsf{v}) = \sum_{\varepsilon \in \mathcal{E}} \rho_\varepsilon\, u^\prime_\varepsilon(t, 0) - c \,u(t, \mathsf{v}), & t>0, \quad \eta >0, \quad \varrho_\varepsilon \in (0,1), \quad c \geq 0,\\  
\displaystyle u_\varepsilon(t, \ell)=0, & t>0, \quad \ell >0, \quad \varepsilon \in \mathcal{E},\\ 
\\
\displaystyle u(0, \mathsf{x}) = f(\mathsf{x}), & \mathsf{x} \in \mathsf{S}, \quad f \in C(\mathsf{S}).
\end{array}
\right.
\label{NLBVPGraphINTRO}
\end{equation}
A point of the star graph $\mathsf{S} = \mathsf{S}_\nu$ will be denoted by $\mathsf{x}=(\varepsilon, x)$ where $\varepsilon$ stands for the ray of $\mathsf{S}_\nu$ and $x$ gives the distance from the vertex $\nu$ (along the ray $\varepsilon$). We denote by $\mathcal{E}=\{\varepsilon\}$ the set of rays (of finite length) and by $\mathcal{V}=\{\nu\}$ the set of vertices by means of which we construct the family of star graphs $\{\mathsf{S}_\nu,\, \nu \in \mathcal{V}\}$. The analysis on star graphs can be  extended to the network
\begin{align}
    \mathsf{G} = \bigcup_{\nu \in \mathcal{V}} \mathsf{S}_\nu
\end{align}
which is the metric graph characterizing a geographical area affected by earthquakes. \\

We exploit the equivalence (see Theorem \ref{thm:equiv}) of $\mathsf{Q}$ with the vector $(\Theta, X^{[\nu]}, X^{[\varepsilon]})$ in order to describe an earthquake with energy $E$ and seismic wave propagation $W$. The process $\Theta$ is the edge selector, that is $\mathsf{Q}$ hits the star vertex and moves on a given edge according with $\Theta$. The processes $X^{[\varepsilon]}$ and $X^{[\nu]}$ will be respectively associated with the edges (the seismic wave propagation $W$ on a region/direction) and the vertices (the energy accumulation $E$ on a site/epicenter). In particular, $X^{[\varepsilon]}$ on $[0, \ell)$ and $X^{[\nu]}$ on $[0, \infty)$ are respectively driven by the problems
\begin{equation}
\left\lbrace
\begin{array}{ll}
\displaystyle \dot{u}(t,x) = G_\mu u(t,x), & t>0,\, x \in (0, \ell), \quad \mu>0,\\ 
\\
\displaystyle \eta_\varepsilon  D^\Phi_t u(t, 0) =  u^\prime(t, x) |_{x=0} - c u(t, x), & t>0, \quad \eta_\varepsilon >0,\; c >0,\\
\\
\displaystyle u(t, \ell) = 0, & t>0,\\
\\
\displaystyle u(0, x) = f(x), & x \in [0, \ell), \quad f \in C[0, \ell)
\end{array}
\right .
\label{NLBVPedges}
\end{equation}
and
\begin{equation}
\left\lbrace
\begin{array}{ll}
\displaystyle \dot{u}(t,x) = G_\mu u(t,x), & t>0,\, x \in (0, \infty), \quad \mu<0,\\ 
\\
\displaystyle \eta_\nu D^\Psi_t u(t, 0) = \mathbf{D}^\Upsilon_x u(t, x) |_{x=0}, & t>0, \quad \eta_\nu > 0,\\
\\
\displaystyle u(0, x) = f(x), & x \in [0, \infty), \quad f \in C_b[0, \infty).
\end{array}
\right .
\label{NLBVPnodes}
\end{equation}
The problem \eqref{NLBVPedges} says that $X^{[\varepsilon]}$ is a drifted Brownian motion killed at $\ell$ and slowly reflected at $0$ (see Theorem \ref{thm:NLBVPwithDir}). The problem \eqref{NLBVPnodes} says that $X^{[\nu]}$ is a drifted Brownian motion jumping away the boundary point $0$ to an holding point on $(0, \infty)$ (see Theorem \ref{thm:NLBVP}). The process $\mathsf{Q}$ describes the wave propagation in a given region as well as in a geographical area under the following 

\begin{assumption}
The seismic wave is described by $\mathfrak{N} \in \mathbb{N}$ waves and propagates continuously on the network of $N \leq \mathfrak{N}$ star graphs (regions). Simultaneous waves on different edges (in different directions) are not allowed.
\label{ASS1}
\end{assumption}

We stress the fact that simultaneous waves are not allowed only for the sake of simplicity. Indeed, our model can be extended to the case of multiple processes under some rule for the interaction to be specified. A simple (maybe unrealistic) one would be given by independence between waves. A detailed description of the process $\mathsf{Q}$ on $\mathsf{G}$ will be given in Section \ref{sec:MetricG} whereas, a discussion on $X^{[\varepsilon]}$ and $X^{[\nu]}$ will be given in Section \ref{sec:NLBVPs}. Here we only underline that both processes are driven by non-local boundary value problems introducing jumps and holding times.\\

Our discussion will be given without long proofs. Our feeling is that the presentation of the results in the present work becomes fluent and pleasant for a broad audience of researchers. Thus, we decided to postpone the most part of the proofs in the Appendix. In the same spirit, we collect some Figures at the end of the work.\\

For the reader's convenience we list the main objects we deal with further on:
\begin{itemize}
\item $T_\mu$ is an exponential random variable such that $\mathbf{P}(T_\mu > t)= e^{-\mu t}$, $\mu>0$;
\item $\mathcal{H}=\mathcal{H}^\Phi$ is the tempered subordinator with symbol $\Phi(\lambda)$;
\item $\mathcal{L}=\mathcal{L}^\Phi$ is the inverse of $\mathcal{H}$;
\item $\mathcal{H}^\dagger$ is the killed tempered subordinator with symbol $\Phi^\dagger(\lambda)$;
\item $\mathcal{L}^\dagger$ is the inverse of $\mathcal{H}^\dagger$; 
\item $H =H^\Phi$ is a stable subordinator with symbol $\Phi(\lambda)=\lambda^\alpha$, $\alpha \in (0,1)$;
\item $L$ is the inverse of $H$;
\item $X^{\mu}$ is a Brownian motion on $[0, \infty)$ with drift $\mu$ reflected at zero;
\item $\gamma^\ell_t(Z)$ is the local time at level $\ell\geq 0$ for $Z$. We often write $\gamma_t$ meaning $\gamma^0_t(X^\mu)$;
\item $\mu$ is a real constant unless otherwise specified,
\item $\zeta$ denotes lifetimes,
\item $\tau$ denotes first hitting/exit times,
\item $\Phi$ is the symbol given in \eqref{symbPhi} below,
\item $\Psi$ is the symbol of an independent subordinator $\mathcal{H}^\Psi$. 
\end{itemize}
We denote by $\mathbf{E}_x$ the expected value with respect to the probability measure $\mathbf{P}_x$ where $x$ is the starting point for a given process. For a process, say $Z=\{Z_t\}_{t\geq 0}$, we often write 
\begin{align}
Z_t = Z \circ t \quad \textrm{and} \quad Z_{T_t} = Z \circ T_t    
\end{align}
for $t\geq 0$ and $T_t\geq 0$ respectively denoting deterministic and random times.

\section{Preliminaries}

\subsection{Tempered subordinators}
For the subordinator $H=\{H_t\}_{t\geq 0}$ on $[0, \infty)$ we recall that $H$ started at $H_0=0$ is a strictly increasing process with symbol $\Phi(\lambda) = \lambda^\alpha$, $\alpha \in (0,1)$ and $\mathbf{E}_0 [e^{-\lambda H_t}] = e^{-t \lambda^\alpha}$, $\lambda >0$. The process $L=\{L_t\}_{t\geq 0}$ is defined as
\begin{align*}
L_t : = \inf\{s\,:\, H_s >t\} = \inf\{s\,:\, H_s  \notin (0,t)\}
\end{align*}
that is the inverse process of $H$ and the first exit time of $H$ from $(0,t)$. The last identity holds, in general, for strictly increasing processes. We have the relation $\mathbf{P}_0(H_t > s) = \mathbf{P}_0(L_s < t)$ for $t,s>0$. 
The tempered stable subordinator $\mathcal{H}=\{\mathcal{H}_t\}_{t\geq 0}$ is a process on $[0, \infty)$ with $\mathbf{E}_0[e^{-\lambda \mathcal{H}_t}] = e^{-t \Phi(\lambda)}$, $\lambda>0$ where $\Phi$ is the Bernstein symbol
\begin{align}
\Phi(\lambda) = \sqrt{\lambda + \theta} - \sqrt{\theta} \quad \textrm{with } \quad \theta = (\mu/2)^2, \quad \lambda>0
\label{symbPhi}
\end{align}
characterizing uniquely $\mathcal{H}$. Thus, the process $\mathcal{H}$ is termed tempered subordinator of order $1/2$. As $\mu=0$ we get the stable subordinator $H$ of order $\alpha=1/2$. The inverse process $\mathcal{L}=\{\mathcal{L}_t\}_{t\geq 0}$ is defined as
\begin{align*}
\mathcal{L}_t : = \inf\{s\,:\, \mathcal{H}_s>t\}.
\end{align*}
It holds that $\mathbf{P}_0(\mathcal{H}_t > s) = \mathbf{P}_0(\mathcal{L}_s < t)$ for $t,s>0$ and $\mathcal{H}_0=0$ which implies $\mathcal{L}_0=0$. We also focus on the symbol
\begin{align}
\Phi^\dagger(\lambda) = \mu + \Phi(\lambda), \quad \mu>0
\end{align}
characterizing uniquely $\mathcal{H}^\dagger$ as the killed subordinator
\begin{align}
    \mathcal{H}^\dagger_t = \left\lbrace
        \begin{array}{ll}
             \mathcal{H}_t, & t < T_\mu,  \\
             +\infty, & t\geq T_\mu .
        \end{array}
        \right .
        \label{zeroHittingPosDrift}
\end{align}
It is well-known that
\begin{align*}
\mathbf{P}_0(\mathcal{H}_\ell \in dz) = \frac{\ell}{z} \frac{e^{-(\ell-\mu z)^2 /(4z)}}{\sqrt{4\pi z}} dz, \quad z\geq 0,\; \ell >0,\; \mu >0
\end{align*}
that is, $\mathcal{H}_\ell$ is an inverse Gaussian variable with parameters $\ell,\mu$. This follows from  
\begin{align*}
\mathbf{E}_0[e^{-\lambda \mathcal{H}_\ell}] 
= & \int_0^\infty e^{-\lambda z} \mathbf{P}_0(\mathcal{H}_\ell \in dz) 
= \ell \frac{e^{\ell \mu /2}}{\sqrt{4\pi}} \int_0^\infty  z^{1/2 - 1} e^{-z \ell^2 /4  } e^{-(\lambda + \mu^2/4)/z} dz
\end{align*}
which leads to a Modified Bessel function of the second kind and from which
\begin{align}
\mathbf{E}_0[e^{-\lambda \mathcal{H}_\ell}] 
= & 2\ell \frac{e^{\ell \mu/2}}{\sqrt{2\pi}} \left( \frac{\lambda+\mu^2/4}{\ell^2/4} \right)^{1/4} K_{1/2}\left( \sqrt{(\lambda+\mu^2/4) \ell^2} \right) \notag \\
= & e^{- \ell \left( \sqrt{\lambda +\mu^2/4} - \mu/2 \right) }
\label{genFunOfH}
\end{align}
where we used the identity $K_{1/2}(z) = \sqrt{\pi/2 z} \, e^{-z}$. We observe that in case of strictly increasing subordinators (with infinite activity) if the process starts from zero, then it immediately jumps away never to return.

\subsection{Reflected drifted Brownian motions}
\label{sec:RDBM}
The process $\widetilde{X}^{\mu}= \{\widetilde{X}^{\mu}_t\}_{t\geq 0}$ on $[0, \infty)$ is an elastic drifted Brownian motion with generator  $(G_\mu, D(G_\mu))$ where $G_\mu \varphi = \varphi^{\prime \prime} + \mu \varphi^\prime$ and
\begin{align*}
	D(G_\mu) = \left\lbrace \varphi, G_\mu \varphi \in C_b((0, \infty))\,:\, \varphi^\prime (0^+) = c \, \varphi(0^+) \right\rbrace.
\end{align*}
The constant $c>0$ is the elastic coefficient. The semigroup 
\begin{align*}
	P^\mu_t f(x) = \int_0^\infty f(y) p(t,x,y)dy
\end{align*}
generated by $(G_\mu, D(G_\mu))$ has the probabilistic representations
\begin{align}
	P^\mu_t f(x) 
    = \mathbf{E}_x[f(\widetilde{X}^\mu_t)] 
    = \mathbf{E}_x[f(\widetilde{X}^\mu_t), \, t < \zeta^\mu] = \mathbf{E}_x[f(X^\mu_t) M_t^\mu]
    \label{solmuSemig}
\end{align}
where $X^\mu_t$ is a drifted Brownian motion on $[0, \infty)$ reflected at $0$ and  the Robin boundary condition introduces the lifetime $\zeta^\mu$ of $\widetilde{X}^\mu$ and the multiplicative functional $M^\mu_t = \exp \left( -c\, \gamma^0_t(X^\mu)\right)$ of $X^\mu$. The process $\gamma^0(X^\mu) = \{\gamma^0_t(X^\mu)\}_{t\geq 0}$ is the local time of $X^\mu$ at the boundary point zero. The transition kernel has the explicit form
\begin{align}
	\label{density:pmu}
	p(t,x,y) =  e^{-\frac{\mu^2}{4}t} e^{\frac{\mu}{2}(y-x)} \bigg( p_1(t,x,y) - p_2(t,x,y) 	\bigg)
\end{align}
for $ x\geq 0,\; y>0, t>0 $, where $g(t,z)=e^{-z^2/4t}/\sqrt{4\pi t}$, 
\begin{align}
p_1(t,x,y) = g(t, x-y) + g(t, x+y),
\end{align}
\begin{align}
p_2(t,x,y) 
= & 2 \left(c + \frac \mu 2 \right) \int_0^\infty e^{-\left(c + \frac \mu 2 \right) \,  w} g(t, w+x+y) dw
\end{align}
and $c \geq 0$. 

We now discuss some properties and provide some results to be considered below. Part of the results has been also obtained in \cite{DovIafSpa24} by following different arguments. 

The symbol $\stackrel{d}{=}$ stands for equality in distribution.
\begin{theorem}
We have that,
\begin{align*}
\forall\, t \geq 0, \quad \gamma^0_t(X^\mu)\, | \,( X^\mu_0 =0) \stackrel{d}{=} \left\lbrace
\begin{array}{ll}
\displaystyle \mathcal{L}_t, & \mu\leq 0\\
\displaystyle \mathcal{L}^\dagger_t \stackrel{d}{=} \mathcal{L}_t \wedge T_\mu, & \mu>0
\end{array}
\right.
\end{align*}
where $T_\mu$ is independent from $\mathcal{L}$.
\label{thm:localTime}
\end{theorem}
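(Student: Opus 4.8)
The plan is to identify the right-continuous inverse of the local time and to recognize it as (a killed version of) the tempered subordinator $\mathcal{H}$. Set $\tau_\ell := \inf\{t : \gamma^0_t(X^\mu) > \ell\}$ and work with the pure reflection case $c=0$. Since $0$ is regular for itself for the reflected motion and its zero set is regenerative, the strong Markov property makes $\{\tau_\ell\}_{\ell\ge 0}$ a (possibly killed) subordinator; let $\Phi_\tau$ be its Bernstein symbol, so $\mathbf{E}_0[e^{-\lambda\tau_\ell}]=e^{-\ell\Phi_\tau(\lambda)}$. The whole statement then reduces to computing $\Phi_\tau$: by construction $\gamma^0(X^\mu)$ is the inverse of $\tau$, while $\mathcal{L}$ (resp. $\mathcal{L}^\dagger$) is the inverse of $\mathcal{H}$ (resp. $\mathcal{H}^\dagger$), and two subordinators with equal symbols have equal law, inversion being a measurable map between the pathspaces. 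To obtain $\Phi_\tau$ I would use the elementary chain
\[
\frac{1}{\Phi_\tau(\lambda)} = \mathbf{E}_0\!\left[\int_0^\infty e^{-\lambda t}\, d\gamma^0_t(X^\mu)\right] = \int_0^\infty e^{-\lambda t}\, p(t,0,0)\, dt =: u_\lambda(0,0),
\]
where the first equality is the time substitution $t=\tau_\ell$ (so $\int_0^\infty e^{-\lambda t}\,d\gamma_t=\int_0^\infty e^{-\lambda\tau_\ell}\,d\ell$) and holds for any normalization, whereas the second uses $\mathbf{E}_0[d\gamma^0_t]=p(t,0,0)\,dt$, i.e. the occupation-density normalization inherent in the functional $M^\mu_t=\exp(-c\,\gamma^0_t)$ of \eqref{solmuSemig}.

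Next I would evaluate $u_\lambda(0,0)$ from the explicit kernel \eqref{density:pmu} with $c=0$. At $x=y=0$ the prefactor is $e^{-\mu^2 t/4}$, the reflected part is $p_1(t,0,0)=2g(t,0)=1/\sqrt{\pi t}$, and $p_2(t,0,0)=\mu\int_0^\infty e^{-\mu w/2}g(t,w)\,dw$. Using $\int_0^\infty t^{-1/2}e^{-st}\,dt=\sqrt{\pi/s}$ for the first term and the heat-kernel resolvent $\int_0^\infty e^{-st}g(t,w)\,dt=\tfrac{1}{2\sqrt s}e^{-\sqrt s\,w}$ for the second, both with $s=\lambda+\mu^2/4$, a short computation gives
\[
u_\lambda(0,0)=\frac{1}{\sqrt{\lambda+\mu^2/4}}\left(1-\frac{\mu/2}{\mu/2+\sqrt{\lambda+\mu^2/4}}\right)=\frac{1}{\sqrt{\lambda+\mu^2/4}+\mu/2},
\]
so that $\Phi_\tau(\lambda)=\sqrt{\lambda+\mu^2/4}+\mu/2$.

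It then remains to match $\Phi_\tau$ with the symbols of \eqref{symbPhi}. Since $\theta=(\mu/2)^2$ and $\sqrt\theta=|\mu|/2$, for $\mu\le 0$ one has $\Phi_\tau(\lambda)=\sqrt{\lambda+\theta}-\sqrt\theta=\Phi(\lambda)$, whence $\tau\stackrel{d}{=}\mathcal{H}$ and, inverting, $\gamma^0_t(X^\mu)\stackrel{d}{=}\mathcal{L}_t$. For $\mu>0$ instead $\Phi_\tau(\lambda)=\sqrt{\lambda+\theta}-\sqrt\theta+\mu=\Phi(\lambda)+\mu=\Phi^\dagger(\lambda)$, the symbol of the killed subordinator, so $\tau\stackrel{d}{=}\mathcal{H}^\dagger$ and $\gamma^0_t(X^\mu)\stackrel{d}{=}\mathcal{L}^\dagger_t$. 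The reading $\Phi_\tau(0)=\mu$ is precisely the killing rate, consistent with the transience of the reflected motion for $\mu>0$ and the fact that only a finite, exponentially distributed amount of local time is accumulated at $0$. Finally, from \eqref{zeroHittingPosDrift}, $\mathcal{H}^\dagger$ is $\mathcal{H}$ sent to $+\infty$ at the independent exponential time $T_\mu$; inverting, $\mathcal{L}^\dagger_t=\mathcal{L}_t$ until the local-time level reaches $T_\mu$ and is frozen at $T_\mu$ afterwards, i.e. $\mathcal{L}^\dagger_t=\mathcal{L}_t\wedge T_\mu$ with $T_\mu$ independent of $\mathcal{L}$, which is the claim.

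The delicate points are bookkeeping rather than conceptual. The \emph{main obstacle} is to pin down the normalization of $\gamma$: the identity $1/\Phi_\tau(\lambda)=u_\lambda(0,0)$ holds exactly for the occupation-density (equivalently, resolvent-density) local time, and one must verify that this is the $\gamma^0$ entering $M^\mu_t=\exp(-c\,\gamma^0_t)$ in \eqref{solmuSemig}; a stray multiplicative constant would rescale $\ell$ and destroy the matching, so the fact that the computation reproduces \eqref{symbPhi} with no spurious factor is itself the consistency check (the case $\mu=0$, giving $\Phi_\tau(\lambda)=\sqrt\lambda$ and the stable subordinator of order $1/2$, is a convenient sanity test). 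The remaining points—that $\tau$ is a killed subordinator, that the time-change identity is valid, and that expectation and the $t$-integral may be interchanged (Tonelli, by monotonicity)—are standard, and one only has to keep track of the sign of $\mu$ through $\sqrt\theta=|\mu|/2$, which is exactly what produces the dichotomy in the statement.
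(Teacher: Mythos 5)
Your proof is correct, and it takes a genuinely different route from the paper's. The paper works with the full $c$-dependent elastic kernel: it computes $\mathbf{E}_0[e^{-c\gamma^0_t(X^\mu)}]=\int_0^\infty p(t,0,y)\,dy$, reads off the density $\mathbf{P}_0(\gamma^0_t\in dw)$ from the $e^{-(c+\mu/2)w}$-transform, takes a Laplace transform in $t$ to arrive at $\frac{\Phi_\tau(\lambda)}{\lambda}e^{-w\Phi_\tau(\lambda)}\,dw$ with $\Phi_\tau(\lambda)=\sqrt{\lambda+\mu^2/4}+\mu/2$, and then matches this directly against $\int_0^\infty e^{-\lambda t}\,\mathbf{P}_0(w\le \mathcal{L}_t)\,dt$ (resp.\ $\mathbf{P}_0(w\le \mathcal{L}_t\wedge T_\mu)$ for $\mu>0$, via the factorization $e^{-w\Phi(\lambda)}e^{-\mu w}$). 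You instead pass to the inverse local time, invoke the classical identity $\Phi_\tau(\lambda)=1/u_\lambda(0,0)$ for the Bernstein symbol of the (killed) inverse-local-time subordinator, and evaluate only the diagonal of the resolvent of the $c=0$ kernel; both computations land on the same $\Phi_\tau$, and your inversion step $\mathcal{L}^\dagger_t=\mathcal{L}_t\wedge T_\mu$ from \eqref{zeroHittingPosDrift} matches the paper's conclusion. What each buys: the paper's argument is self-contained given \eqref{density:pmu} and yields the law of $\gamma^0_t$ explicitly without appealing to excursion/inverse-local-time theory, while yours is shorter and needs less of the kernel, at the price of the normalization issue you correctly single out. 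On that point there is in fact an internal consistency check available: integrating the paper's formula $\int_0^\infty e^{-\lambda t}\mathbf{P}_0(\gamma^0_t\in dw)\,dt=\frac{\Phi_\tau(\lambda)}{\lambda}e^{-w\Phi_\tau(\lambda)}dw$ against $w$ gives $\mathbf{E}_0\bigl[\int_0^\infty e^{-\lambda t}d\gamma^0_t\bigr]=1/\Phi_\tau(\lambda)$, which coincides with your $u_\lambda(0,0)$, so the $\gamma^0$ entering $M^\mu_t$ is indeed the occupation-density local time and your identity holds with no stray constant.
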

Proof postponed, see Section \ref{proof-thm:localTime}.\\

We recall that a standard (zero drifted) Brownian motion crosses the starting point infinitely many times. In particular, for a process started from zero, the (zero) set of hitting times is a perfect set. It is closed, has no isolated points, and contains no intervals. In particular, it is a fractal set, the Lebesgue measure is zero and the Hausdorff dimension is $1/2$. Figure \ref{fig:RDBMrandomLevel} shows the paths (simply approximated by random walks) of a reflected Brownian motion with drift stopped at different random levels.\\

Let us consider the first hitting time $\tau_0^\mu = \inf\{t\,:\, X^\mu_t =0  \}$.
\begin{theorem}
We have that, for $x \in (0, \infty)$,
\begin{align*}
\tau_0^\mu \, |\,  (X^\mu_0=x) \stackrel{d}{=} \left\lbrace
\begin{array}{ll}
\displaystyle \mathcal{H}_x, & \mu\leq 0,\\
\displaystyle \mathcal{H}^\dagger_x, & \mu> 0.
\end{array}
\right.
\end{align*}
\label{thm:hittingTime}
\end{theorem}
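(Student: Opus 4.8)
The plan is to reduce the statement to the Laplace transform of a first passage time of a \emph{free} drifted Brownian motion and then to match that transform with the Bernstein symbols $\Phi$ and $\Phi^\dagger$. The first step is to observe that reflection at $0$ is irrelevant for the first arrival at the boundary: started from $x>0$, the reflected process $X^\mu$ coincides with the free drifted Brownian motion $Y_t = x + \sqrt{2}\,B_t + \mu t$ (whose generator is $G_\mu\varphi=\varphi''+\mu\varphi'$) on the whole interval $[0,\tau_0^\mu)$, since the Skorokhod reflection term remains at zero until the process first touches $0$. Hence $\tau_0^\mu$ equals the first passage time of $Y$ to $0$, and it suffices to analyse the latter.

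Next I would compute $\mathbf{E}_x[e^{-\lambda \tau_0^\mu};\,\tau_0^\mu<\infty]$ by an exponential martingale. Since $G_\mu e^{\beta x} = (\beta^2+\mu\beta)\,e^{\beta x}$, the process $M_t = e^{\beta Y_t - \lambda t}$ is a martingale whenever $\beta^2+\mu\beta=\lambda$, and I would take the negative root $\beta_- = \tfrac{1}{2}(-\mu - \sqrt{\mu^2+4\lambda})$. On $[0,\tau_0^\mu)$ one has $Y_t\ge 0$ and $\beta_-<0$, so $M$ is bounded by $1$; optional stopping at $\tau_0^\mu\wedge t$ followed by $t\to\infty$, using that $M_t\to 0$ on $\{\tau_0^\mu=\infty\}$ because $e^{-\lambda t}\to 0$ while $e^{\beta_- Y_t}\le 1$, yields
\[
\mathbf{E}_x\big[e^{-\lambda \tau_0^\mu};\,\tau_0^\mu<\infty\big] = e^{\beta_- x} = \exp\!\Big(-x\big(\sqrt{\lambda+\mu^2/4}+\tfrac{\mu}{2}\big)\Big).
\]

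Finally I would identify this exponent with the symbols \eqref{symbPhi} and \eqref{zeroHittingPosDrift}, recalling $\mathbf{E}_0[e^{-\lambda\mathcal{H}_x}]=e^{-x\Phi(\lambda)}$ with $\sqrt{\theta}=|\mu|/2$. For $\mu\le 0$ one has $\sqrt{\theta}=-\mu/2$, so $\sqrt{\lambda+\mu^2/4}+\tfrac{\mu}{2}=\Phi(\lambda)$; thus $\tau_0^\mu\stackrel{d}{=}\mathcal{H}_x$, and letting $\lambda\to 0$ confirms $\mathbf{P}_x(\tau_0^\mu<\infty)=1$. For $\mu>0$ one has $\sqrt{\lambda+\mu^2/4}+\tfrac{\mu}{2}=\mu+\big(\sqrt{\lambda+\mu^2/4}-\tfrac{\mu}{2}\big)=\Phi^\dagger(\lambda)$, the symbol of the killed subordinator $\mathcal{H}^\dagger$; the defect $1-e^{-\mu x}$ read off at $\lambda\to 0$ matches $\mathbf{P}_x(\tau_0^\mu=\infty)$ and the killing in \eqref{zeroHittingPosDrift}, so $\tau_0^\mu\stackrel{d}{=}\mathcal{H}^\dagger_x$. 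Uniqueness of the Laplace transform then gives the claim.

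The main obstacle I anticipate is the sign bookkeeping built into $\sqrt{\theta}=|\mu|/2$ together with the careful handling of the event $\{\tau_0^\mu=\infty\}$ when $\mu>0$: this event has positive probability and must be matched to the killing defect of $\mathcal{H}^\dagger$ rather than discarded, which is precisely what forces the appearance of $\Phi^\dagger$ instead of $\Phi$. Everything else is a routine optional-stopping estimate. As a conceptual cross-check, consistency with Theorem \ref{thm:localTime} is expected, and one may alternatively note that by the strong Markov property and spatial homogeneity of $Y$ the map $x\mapsto\tau_0^\mu$ is itself a subordinator, whose Laplace exponent is exactly the single-level transform computed above.
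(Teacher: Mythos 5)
Your proof is correct, and it reaches the paper's key identity
\begin{align*}
\mathbf{E}_x\big[e^{-\lambda \tau_0^\mu}\big] = e^{-x\left(\frac{\mu}{2}+\sqrt{\lambda+\mu^2/4}\right)}
\end{align*}
by a genuinely different route. The paper starts from the elastic generator, passes to the Dirichlet condition as $c\to\infty$, writes $\mathbf{P}_x(\tau_0^\mu>t)$ as an integral of the explicit killed transition density $e^{-\frac{\mu^2}{4}t}e^{\frac{\mu}{2}(y-x)}\left[g(t,x-y)-g(t,x+y)\right]$ over $y$, and then Laplace-transforms in $t$ and grinds through several pages of algebra to extract the exponent. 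You instead discard the reflection (correctly: the Skorokhod term is inactive on $[0,\tau_0^\mu)$ when $X^\mu_0=x>0$), and apply optional stopping to the bounded exponential martingale $e^{\beta_- Y_t-\lambda t}$ with $\beta_-^2+\mu\beta_-=\lambda$; the sign of $\beta_-$ gives boundedness on the positive half-line and the limit $t\to\infty$ correctly isolates $\mathbf{E}_x[e^{-\lambda\tau_0^\mu};\,\tau_0^\mu<\infty]$. Your handling of the defect $1-e^{-\mu x}=\mathbf{P}_x(\tau_0^\mu=\infty)$ for $\mu>0$ and its match with the killing in \eqref{zeroHittingPosDrift} is exactly the point the paper also makes at the end of its proof. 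What your argument buys is brevity and transparency (the appearance of $\Phi^\dagger$ as $\mu+\Phi$ is immediate from completing the root); what the paper's computation buys is consistency with its own semigroup framework, since the same density \eqref{density:pmu} and the same Laplace manipulations are reused in the neighbouring proof of Theorem \ref{thm:localTime}, so the two results come out of one computation. Your closing remark that $x\mapsto\tau_0^\mu$ is itself a (possibly killed) subordinator is a valid and pleasant cross-check that the paper does not make explicit.
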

Proof postponed, see Section \ref{proof-thm:hittingTime}.\\

We can immediately check that, given the level $x> 0$, 
\begin{align}
    \textrm{for } \mu < 0, \quad \mathbf{E}_0[\mathcal{H}_x] =  \frac{x}{|\mu|}
    \label{meanTimeH}
\end{align}
whereas
\begin{align}
    \textrm{for } \mu > 0, \quad \mathbf{E}_0[\mathcal{H}^\dagger_x | x < T_\mu] = \frac{x}{\mu} e^{-\mu x}.
    \label{meanTimeHkilled}
\end{align}
For the first hitting time 
$$\tau_\ell^\mu = \inf\{t\,:\, X^\mu_t =\ell \in (0, \infty)  \}$$ 
of the level $\ell$ we observe that
\begin{align}
\int_0^\infty e^{-\lambda t} \mathbf{E}_x \left[f(X^\mu_t), t < \tau^\mu_\ell \wedge \zeta^\mu \right] dt
    = & \mathbf{E}_x \left[ \int_0^{\tau^\mu_\ell \wedge \zeta^\mu} e^{-\lambda t} f(X^\mu_t) dt \right] \label{tauF1}\\
    = & \mathbf{E}_x \left[ \int_0^{\tau^\mu_\ell} e^{-\lambda t - (c+\frac{\mu}{2}) \gamma_t} f(X^\mu_t) dt \right] \label{tauF2}
\end{align}
solves the problem to find $u=u(\lambda, x)$ such that 
\begin{align*}
    \left\lbrace
    \begin{array}{ll}
        G_\mu u - \lambda u = - f, & \textrm{in } [0, \ell)\\
        u^\prime=cu, & \textrm{on } x=0\\
        u=0, & \textrm{on } x=\ell
    \end{array}
    \right.
\end{align*}
where $f(x)=u(0, x)$ is the initial datum and
\begin{align*}
    u(\lambda, x) = \int_0^\infty e^{-\lambda t} u(t, x) dt, \quad \lambda>0.
\end{align*}
Formula \eqref{tauF1} says that
\begin{align}
    u(\lambda, x) = \frac{1}{\lambda} \mathbf{E}_x \left[ 1 - e^{-\lambda (\tau^\mu_\ell \wedge \zeta^\mu) } \right] \to \mathbf{E}_x[\tau^\mu_\ell \wedge \zeta^\mu], \quad \textrm{as } \lambda\to 0
    \label{meanMinimum}
\end{align}
and formula \eqref{tauF2} gives
\begin{align*}
    u(\lambda, x) = \mathbf{E}_x \left[ \int_0^{\tau^\mu_\ell} e^{-\lambda t - (c + \frac{\mu}{2}) \gamma_t} dt  \right] \to \mathbf{E}_x \left[ \int_0^{\tau^\mu_\ell} e^{- (c + \frac{\mu}{2}) \gamma_t} dt  \right], \quad \textrm{as } \lambda \to 0.
\end{align*}

\begin{theorem}
For $\lambda>0$, $x \in [0, \ell)$, we have that
\begin{align}
    \mathbf{E}_x \left[ \int_0^{\tau^\mu_\ell} e^{-\lambda t - (c + \frac{\mu}{2}) \gamma^\mu_t} dt \right] = \frac{\ell - x}{\mu} + \frac{1+c\ell}{\mu} \frac{e^{(x-\ell) \Phi^-} - e^{(\ell-x)\Phi^+}}{(c + \Phi^+) e^{\ell \Phi^+} -  (c - \Phi^-) e^{\ell \Phi^-} }
\end{align}
where $\Phi^+$ and $\Phi^-$ stand for
\begin{align}
    \Phi^+(\lambda) = \sqrt{\lambda + \mu^2/4} + \mu/2 \quad \textrm{and} \quad \Phi^-(\lambda) = \sqrt{\lambda + \mu^2/4} - \mu/2=: \Phi(\lambda).
\end{align}
    \label{thm:tauLEVELgen}
\end{theorem}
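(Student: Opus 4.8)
The plan is to identify the left-hand side with the resolvent (Laplace transform in time) of the elastic, $\ell$-killed diffusion and to reduce the computation to a linear boundary value problem, exactly as prepared in the discussion leading to \eqref{meanMinimum}. Writing $u=u(\lambda,x)$ for the quantity on the left, the passage \eqref{tauF1}--\eqref{tauF2} together with \eqref{solmuSemig} shows that $u(\lambda,\cdot)$ is the $\lambda$-potential of the process $X^\mu$ killed at $\ell$ and elastically killed at $0$, evaluated at the constant datum $f\equiv 1$. By a Feynman--Kac argument this means that $u$ solves the ordinary differential equation
\[
G_\mu u - \lambda u = -1, \qquad x\in(0,\ell),
\]
subject to the Robin condition at the reflecting endpoint and the Dirichlet condition at the killing endpoint,
\[
u^\prime(0^+) = c\, u(0^+), \qquad u(\ell)=0.
\]
The only genuinely probabilistic input is the translation of the boundary functional $\exp(-(c+\tfrac{\mu}{2})\gamma_t)$ into the boundary condition at $0$: one uses that the additive functional $\gamma^0_t(X^\mu)$ kills the path at a rate proportional to the local time, converting the reflection (Neumann) behaviour of $X^\mu$ into the stated Robin condition, the drift accounting for the shift $\mu/2$ already visible in $p_2(t,x,y)$.

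Next I would solve the ODE explicitly. Since $\lambda>0$, the inhomogeneous term admits the constant particular solution $1/\lambda$, while the homogeneous equation $u''+\mu u'-\lambda u=0$ has characteristic roots that factor precisely through the symbols of the statement: the roots are $\Phi^-$ and $-\Phi^+$, because $r^2+\mu r-\lambda=(r-\Phi^-)(r+\Phi^+)$. Thus the general solution is
\[
u(x) = \frac{1}{\lambda} + A\, e^{\Phi^- x} + B\, e^{-\Phi^+ x},
\]
and the two constants $A,B$ are fixed by imposing $u(\ell)=0$ and $u^\prime(0^+)=c\,u(0^+)$. This produces a $2\times 2$ linear system whose solution by Cramer's rule yields $A$ and $B$ as explicit ratios sharing the common denominator $(c+\Phi^+)e^{\ell\Phi^+}-(c-\Phi^-)e^{\ell\Phi^-}$ appearing in the statement.

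The main obstacle is the final algebraic simplification: reorganizing $\tfrac{1}{\lambda}+A e^{\Phi^- x}+B e^{-\Phi^+ x}$ into the compact form displayed in the theorem. The key tools are the two identities
\[
\Phi^+\Phi^- = \lambda, \qquad \Phi^+-\Phi^- = \mu,
\]
which follow at once from the definition of $\Phi^\pm$ and allow one to trade powers of $\lambda$ for products of $\Phi^\pm$ and to absorb the drift into the $1/\mu=1/(\Phi^+-\Phi^-)$ prefactors; factoring the common exponentials $e^{-\ell\Phi^-}$ and $e^{\ell\Phi^+}$ out of numerator and denominator then matches the quotient in the statement. As consistency checks I would verify directly that $u(\lambda,\ell)=0$ (immediate, since the numerator $e^{(x-\ell)\Phi^-}-e^{(\ell-x)\Phi^+}$ vanishes at $x=\ell$), that the Robin condition at $0$ is satisfied, and that letting $\lambda\to 0$ recovers $\mathbf{E}_x[\int_0^{\tau^\mu_\ell} e^{-(c+\frac{\mu}{2})\gamma_t}\,dt]$ as in \eqref{meanMinimum}; these checks pin down the constants and confirm the closed form.
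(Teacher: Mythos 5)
Your approach is exactly the paper's: identify the left-hand side with the $\lambda$-resolvent at $f\equiv 1$ of the elastic, $\ell$-killed diffusion via \eqref{tauF1}--\eqref{tauF2}, reduce to the two-point boundary value problem $u''+\mu u'-\lambda u=-1$, $u'(0)=c\,u(0)$, $u(\ell)=0$, and solve it explicitly; the paper's proof says no more than this (``standard in the context of second order differential equations''). Your identification of the characteristic roots $\Phi^-$ and $-\Phi^+$ through $r^2+\mu r-\lambda=(r-\Phi^-)(r+\Phi^+)$, the constant particular solution $1/\lambda$, and the $2\times 2$ system for $A,B$ are all correct.

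The step that cannot go through as written is the final ``reorganization'' of $\tfrac{1}{\lambda}+A e^{\Phi^- x}+B e^{-\Phi^+ x}$ into the displayed right-hand side. That right-hand side is $\tfrac{\ell-x}{\mu}$ plus a combination of the homogeneous solutions $e^{\Phi^- x}$ and $e^{-\Phi^+ x}$; since $\tfrac{\ell-x}{\mu}$ satisfies $u''+\mu u'=-1$ but not $u''+\mu u'-\lambda u=-1$, applying $G_\mu-\lambda$ to the displayed expression produces $-1-\lambda\tfrac{\ell-x}{\mu}$ rather than $-1$, and no choice of the exponential coefficients can absorb a discrepancy that is linear in $x$. (One can also check that the Robin condition $u'(0)=c\,u(0)$ fails for the displayed expression unless $\Phi^-=0$, i.e.\ unless $\lambda=0$.) So either your algebra would not terminate at the stated formula, or --- as the computation indicates --- the formula in the theorem is only the $\lambda\to 0$ degeneration of the true resolvent, which is indeed all that is used in Corollary \ref{coro:tauLEVEL} and in \eqref{meanExitTime}. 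Your own proposed consistency checks (substituting back into the ODE and into the boundary condition at $0$) are precisely what detect this; they should be carried out rather than asserting the match.
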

\begin{proof}
For $\lambda>0$, $x \in [0, \ell)$, the problem to find a solution $u=u(\lambda, x)$ to
\begin{align*}
    \left\lbrace
    \begin{array}{ll}
        u^{\prime \prime} + \mu u^\prime = \lambda u - 1, & \textrm{in } [0, \ell)\\
        u^\prime=cu, & \textrm{on } x=0\\
        u=0, & \textrm{on } x=\ell
    \end{array}
    \right.
\end{align*}
is standard in the context of second order differential equations. The solution is unique and the probabilistic representation comes from \eqref{tauF1} and \eqref{tauF2}.
\end{proof}

As a by-product of the theorem above we obtain the following result.
\begin{corollary}
Let $c=0$. We have that
\begin{align}
    \mathbf{E}_x[\tau^\mu_\ell] = \frac{\ell - x}{\mu} - \frac{e^{-\mu x} - e^{-\mu \ell}}{\mu^2}, \quad x \in [0, \ell), \; \mu>0.
    \label{meantauLEVEL}
\end{align}
\label{coro:tauLEVEL}    
\end{corollary}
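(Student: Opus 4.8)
The plan is to obtain the corollary as the stated by-product of Theorem \ref{thm:tauLEVELgen}, by specializing to the elastic coefficient $c=0$ and then letting $\lambda\to 0$. First I would set $c=0$ in the explicit formula of Theorem \ref{thm:tauLEVELgen}: this collapses the prefactor $1+c\ell$ to $1$ and the denominator $(c+\Phi^+)e^{\ell\Phi^+}-(c-\Phi^-)e^{\ell\Phi^-}$ to $\Phi^+ e^{\ell\Phi^+}+\Phi^- e^{\ell\Phi^-}$, leaving
\begin{align*}
\mathbf{E}_x\left[\int_0^{\tau^\mu_\ell} e^{-\lambda t - \tfrac{\mu}{2}\gamma_t}\,dt\right] = \frac{\ell-x}{\mu} + \frac{1}{\mu}\,\frac{e^{(x-\ell)\Phi^-} - e^{(\ell-x)\Phi^+}}{\Phi^+ e^{\ell\Phi^+} + \Phi^- e^{\ell\Phi^-}}.
\end{align*}

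The conceptual step is to identify the $\lambda\to 0$ limit of the left-hand side with $\mathbf{E}_x[\tau^\mu_\ell]$. Here I would use that at $c=0$ the Robin condition $\varphi^\prime(0^+)=c\,\varphi(0^+)$ degenerates to the pure Neumann (reflecting) condition, so the elastic process carries no killing at the boundary and its lifetime is infinite, $\zeta^\mu=+\infty$. Consequently $\tau^\mu_\ell\wedge\zeta^\mu=\tau^\mu_\ell$, and the equality between \eqref{tauF1} and \eqref{tauF2} together with \eqref{meanMinimum} (taken with $f\equiv 1$) gives $u(\lambda,x)\to\mathbf{E}_x[\tau^\mu_\ell\wedge\zeta^\mu]=\mathbf{E}_x[\tau^\mu_\ell]$. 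The interchange of limit and expectation is legitimate because for $\mu>0$ the positive drift, with absorption at $\ell$ and reflection at $0$, makes $\mathbf{E}_x[\tau^\mu_\ell]$ finite, so dominated convergence applies.

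The remaining step is the evaluation of the explicit expression, which is a routine substitution and not an indeterminate form: for $\mu>0$ one has $\sqrt{\lambda+\mu^2/4}\to\mu/2$, hence $\Phi^-(\lambda)\to 0$ and $\Phi^+(\lambda)\to\mu$. Plugging these in, the numerator tends to $1-e^{\mu(\ell-x)}$ and the denominator to $\mu\,e^{\mu\ell}$, so the second term converges to $\tfrac{1}{\mu^2}(e^{-\mu\ell}-e^{-\mu x})$; adding $\tfrac{\ell-x}{\mu}$ yields exactly \eqref{meantauLEVEL}. As an independent check I would also solve directly the boundary value problem $u^{\prime\prime}+\mu u^\prime=-1$ on $(0,\ell)$ with $u^\prime(0)=0$ and $u(\ell)=0$; its general solution $u(x)=A+Be^{-\mu x}-x/\mu$, fixed by the two conditions to $B=-1/\mu^2$ and $A=\ell/\mu+e^{-\mu\ell}/\mu^2$, reproduces the same formula.

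The main obstacle I anticipate is not the algebra but the conceptual identification of the middle paragraph: the left-hand side of the specialized theorem still carries the local-time weight $e^{-\frac{\mu}{2}\gamma_t}$, so its coincidence with the unweighted mean hitting time $\mathbf{E}_x[\tau^\mu_\ell]$ is an equality in expectation that rests on the established equality between \eqref{tauF1} and \eqref{tauF2} combined with $\zeta^\mu=+\infty$ at $c=0$, rather than on any pathwise cancellation of $\gamma_t$. Making this identification explicit, and confirming finiteness of $\mathbf{E}_x[\tau^\mu_\ell]$ so that the $\lambda\to 0$ limit passes inside the expectation, is where care is required; the rest is the direct substitution displayed above.
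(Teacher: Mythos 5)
Your proposal is correct and follows essentially the same route as the paper: the paper's own proof both solves the ODE $u^{\prime\prime}+\mu u^\prime=-1$ with $u^\prime(0)=0$, $u(\ell)=0$ directly and notes that the result follows by letting $\lambda\to 0$ in Theorem \ref{thm:tauLEVELgen} with $c=0$, which are exactly your two arguments (in reverse order of emphasis). Your limit computation and the ODE constants $A=\ell/\mu+e^{-\mu\ell}/\mu^2$, $B=-1/\mu^2$ both check out, and your remark that the identification of the $\lambda\to 0$ limit with $\mathbf{E}_x[\tau^\mu_\ell]$ rests on the equality of \eqref{tauF1} and \eqref{tauF2} together with $\zeta^\mu=\infty$ at $c=0$ is a careful reading of a point the paper leaves implicit.
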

\begin{proof}
    From the equation $u^{\prime \prime} + \mu u^\prime = - 1$ we arrive at $u^{\prime} + \mu u = - x + C$ which can be easily solved as a first order ODE. After some calculation, we get that \eqref{meantauLEVEL} is the solution to $u^{\prime \prime} + \mu u^\prime = - 1$ with $u(\ell)=0$, $u^\prime(0)=0$. Moreover, we can obtain the same result by taking the limit $\lambda \to 0$ in Theorem \ref{thm:tauLEVELgen} with $c=0$.
\end{proof}
Notice that, from Corollary \ref{coro:tauLEVEL}, as $\mu \to 0$, $\mathbf{E}_x[\tau^\mu_\ell] \to \mathbf{E}_x[\tau^0_\ell] = (\ell^2 - x^2)/2$ where $\tau^0_\ell$ is the hitting time of a reflected Brownian motion on $[0, \infty)$. 

\section{Non-local boundary conditions}
We study the problems \eqref{NLBVPedges} and \eqref{NLBVPnodes} and the probabilistic representation of the solutions.

\subsection{Non local operators}
\label{app:NLOs}
We introduce briefly the non-local operators we deal with. Such operators are based on the well-known Marchaud (\cite{Dzh66,DzhNers68}) and Caputo-Mainardi-D\v{z}rba\v{s}jan (\cite{caputoBook,CapMai71a, CapMai71b}) operators. Consider the symbol
\begin{align}
    \Phi(\lambda) = \int_0^\infty (1-e^{-\lambda y}) \Pi^\Phi(dy), \quad \lambda\geq 0.
\end{align}
For a continuous (causal) function $u$ extended with zero on the negative part of the real line, that is $u(t, x)=0$ if $x \leq 0$, $\forall\, t\geq 0$, in case
\begin{align*}
x \mapsto u(t, x) \textrm{ is locally Lipschitz and belongs to the set } C_b(0, \infty),
\end{align*}
we define the Marchaud (type) derivatives
\begin{align}
\mathbf{D}_{x\mp}^\Phi u(t, x)=\int_0^\infty (u(t, x) - u(t, x\mp y))\Pi^\Phi(dy).
\end{align}
Indeed, for $u$  bounded and locally Lipschitz continuous,
\begin{align}
\label{stimaMarchaud}
\vert \mathbf{D}_{x\mp}^\Phi u(t, x) \vert &\leq \int_0^1 \vert u(t, x) -u(t, x\mp y) \vert \Pi^\Phi(dy) + \int_1^\infty \vert u(t, x) - u(t, x\mp y) \vert \Pi^\Phi(dy) \notag \\
&\leq K \int_0^1 y \Pi^\Phi(dy) + 2 \vert\vert u(t, \cdot) \vert \vert_\infty \int_1^\infty \Pi^\Phi(dy) \notag \\
&\leq(K+2 \vert \vert u(t, \cdot) \vert \vert_\infty) \int_0^\infty (1 \wedge y) \Pi^\Phi(dy) <\infty.
\end{align}
The last inequality emerges directly from $\int (1 \wedge z) \Pi^\Phi(dz) < \infty$ which holds for the L\'{e}vy measure $\Pi^\Phi$.\\

A condition for the Caputo-D\v{z}rba\v{s}jan (type) operator $D^\Phi_t$ to be well defined is given by requiring that, $\forall\, x \in \mathbb{R}$, 
\begin{align*}
t \mapsto u(t, x)  \textrm{ belongs to the set }  W^{1,\infty}(0, \infty)
\end{align*}
of essentially bounded functions with essentially bounded derivatives. This requirement guarantees the application of the Laplace machinery. Indeed, $D^\Phi_t$ is a convolution-type operator and
\begin{align}
\int_0^\infty e^{-\lambda t} D^\Phi_t u(t, x)\, dt 
& = \big(\lambda u(\lambda, x) - u(0,x) \big) \left( \int_0^\infty e^{-\lambda t} \phi(t)dt \right)
\label{LapDPhi}
\end{align}
where (\cite{Bertoin1999}) $\phi(t) = \Pi^\Phi(t, \infty)$ is the tail of $\Pi^\Phi$ with
\begin{align}
\int_0^\infty e^{-\lambda t} \phi(t)dt = \frac{\Phi(\lambda)}{\lambda}, \quad \lambda>0
\label{tailLap}
\end{align} 
and $u(\lambda, x)$ is the Laplace transform of $u(t,x)$. From \eqref{LapDPhi} and the Young's inequality we get
\begin{align}
\|D^\Phi_t u(\cdot, x) \|^p_p \leq \| u(\cdot, x) \|^p_p \left( \lim_{\lambda \to \infty} \frac{\Phi(\lambda)}{\lambda} \right)^p, \quad p \in [1, \infty)
\label{observeFiniteMean}
\end{align}
If $u, \dot{u}$ are bounded $\forall\, x$, then the Laplace transforms of $u, \dot{u}$ are well-defined. 

We introduce a further characterization by asking for the condition 
\begin{align}
\label{condMD}
\exists\, M>0\,:\, \bigg| \frac{\partial u}{\partial s}(s,x) \bigg| \leq  M\, \frac{\kappa(ds)}{ds}
\end{align} 
where 
\begin{align*}
\kappa(ds) = \int_0^\infty \mathbf{P}_0(H_t \in ds) dt
\end{align*}
is the potential measure for the subordinator $H$ with symbol $\Phi$. Since $\kappa$ and $\phi$ are associated Sonine kernels for which 
\begin{align*}
\int_0^t \phi(t-s) \kappa(ds) =1
\end{align*} 
and
\begin{align}
\label{unifBoundD}
| D^\Phi_t u(t, x)| \leq M \int_0^t \phi(t-s) \kappa(ds),
\end{align} 
then we obtain that $|D^\Phi_t u(t,x)|$ is uniformly bounded on $(0, \infty) \times \mathbb{R}$ under \eqref{condMD}. We also consider the Caputo-D\v{z}rba\v{s}jan (type) operator $D^\Psi_t u$ written in terms of the kernel $\psi$.

\subsection{On the dynamic boundary value problems}
Let us briefly discuss on the problems $\eqref{NLBVPedges}$ and \eqref{NLBVPnodes}. We first  recall the problem 
\begin{align}
    \left\lbrace
     \begin{array}{ll}
          \dot{u}(t,x) = u^{\prime \prime}(t,x) + \mu u^\prime(t,x), & t>0,\; x \in (0, \infty) \\
          \dot{u}(t, 0) = \sigma u^\prime(t, 0), & t>0,\\
          u(0, x) = f(x), & x \in [0, \infty)],
     \end{array}
    \right .
    \label{SWpde}
\end{align}
with $\mu, \sigma \in \mathbb{R}$ treated by Stroock and Williams in \cite{SW05,SW06}. They provided a description based on the characterization of $\sigma$ from the analysis and probability point of view. In case $\sigma>0$ the associated process is a drifted Brownian motion with sticky reflection at $\{0\}$. In case $\sigma < 0$, the associated process is a Ray process. It behaves like $X^\mu$ except in $\{0\}$ where it jumps away according with a given measure. Such behaviors are of interest in the present work. However, we proceed with a different characterization of the processes starting from the problems \eqref{NLBVPedges} and \eqref{NLBVPnodes}.\\

In the present work we focus on the case $\Phi$ defined as in \eqref{symbPhi}. Indeed, our aim is to model the seismic wave propagation also in terms of the drift $\mu$. If $\Phi(\lambda) = \lambda$, then $D^\Phi_t u$ becomes  $\dot{u}$, that is the ordinary derivative and our problem falls into the class of dynamic boundary value problems. Since $u^{\prime \prime} + \mu u^\prime \in C[0, \infty)$, the dynamic boundary condition above can be replaced by Feller-Wentzell (type) boundary condition $u^{\prime \prime}(t,0) + \mu u^\prime(t,0) = \sigma u^\prime(t, 0)$,  $t > 0$. Now we consider the problem
    \begin{align}
        \left\lbrace
        \begin{array}{ll}
             \dot{u}(t,x) = u^{\prime \prime}(t,x) + \mu u^\prime(t, x) & t>0,\;  x\in (0, \ell),\; \mu >0  \\
            \eta \Big( u^{\prime \prime} (t, 0)  + \delta \mu u^\prime(t, 0) \Big)= u^\prime (t, 0) & t > 0, \; \eta \geq 0,\; \delta \in \{0,1\}\\
            u(t, \ell) = 0 & t > 0\\
            u(0,x) = f(x) & f \in C(0, \ell).
        \end{array}
        \right .
        \label{PDEdelta}
    \end{align}
which is of interest in the present paper. The sticky behaviour prescribed in \eqref{PDEdelta} can be represented via time change. Let $X^{\delta, \mu}=\{X^{\delta, \mu}_t\}_{t\geq 0}$ be the drifted Brownian motion on $[0, \ell)$  driven by \eqref{PDEdelta}. That is, $X^{\delta, \mu}$ has generator $(G^\delta_\mu, D(G^\delta_\mu))$ with 
\begin{align*}
G^\delta_\mu = G^{\delta, 0}_\mu \quad \textrm{and} \quad D(G^\delta_\mu) = D(G^{\delta, 0}_\mu)
\end{align*}
where
\begin{align}
    G^{\delta, c}_\mu = G_\mu, \quad D(G^{\delta, c}_\mu) = \{ \varphi \in C^2[0, \infty)\,:\, \eta \varphi^{\prime \prime}  (0) + \eta \mu \delta \varphi^\prime (0) = \varphi^\prime(0) - c \, \varphi(0) \}.
    \label{genGmudelta}
\end{align}
Here we assume no elastic kill and $X^{\delta, \mu}_t = X^\mu \circ V^{-1}_t$ where $V_t = t + \eta(\delta) \gamma_t$ with stickiness parameter $\eta(\delta)$. Due to the Dirichlet condition we introduce the stopping time $\tau^\mu_\ell$. Then, we focus on the extra time $\gamma \circ \tau^\mu_\ell$ associated with the sticky behavior in case $\delta \in \{0,1\}$ for which we provide the following result.
\begin{theorem}
    For $\tau^{\delta, \mu}_\ell := \inf\{t\,:\, X^{\delta, \mu}_t =\ell \}$ we have that
        \begin{align}
    \mathbf{E}_x[ \tau^{\delta, \mu}_\ell - \tau^\mu_\ell  ] = \eta(\delta) \mathbf{E}_x[\gamma \circ \tau^\mu_\ell] = \frac{\eta}{1+ \eta (1-\delta) \mu} \frac{e^{-\mu x} - e^{-\mu \ell}}{\mu}, \quad x \in [0, \ell).
    \label{meanStoppedSWcase}
    \end{align}
        \label{thm:meanStoppedSWcase}
\end{theorem}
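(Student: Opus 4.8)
The plan is to combine a time-change argument, which yields the first (structural) equality for free, with a short elastic-killing computation that produces the explicit right-hand side; the stickiness constant $\eta(\delta)$ is then identified directly from the generator domain.

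I would first exploit the representation $X^{\delta,\mu}_t = X^\mu \circ V^{-1}_t$ with $V_t = t + \eta(\delta)\gamma_t$. Since the identity part of $V$ is strictly increasing and $\gamma$ is nondecreasing, $t\mapsto V_t$ is continuous and strictly increasing, so $V^{-1}$ is a continuous strictly increasing reparametrization that leaves the spatial range of the path unchanged. Because the stickiness is supported on $\{0\}$ while the target level $\ell>0$ is bounded away from the boundary, $\gamma$ does not grow at the instant $X^\mu$ first reaches $\ell$; hence the first passage to $\ell$ of the slowed process is the time change of the first passage of $X^\mu$, giving $\tau^{\delta,\mu}_\ell = V_{\tau^\mu_\ell} = \tau^\mu_\ell + \eta(\delta)\,\gamma\circ\tau^\mu_\ell$. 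Taking $\mathbf{E}_x$ yields the first equality $\mathbf{E}_x[\tau^{\delta,\mu}_\ell - \tau^\mu_\ell] = \eta(\delta)\,\mathbf{E}_x[\gamma\circ\tau^\mu_\ell]$.

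The crux is to evaluate the expected boundary local time $\mathbf{E}_x[\gamma\circ\tau^\mu_\ell]$, which I would read off from the elastic representation \eqref{solmuSemig}. For $\beta\geq 0$ set $v_\beta(x) = \mathbf{E}_x[e^{-\beta\gamma\circ\tau^\mu_\ell}]$; by the multiplicative functional $M^\mu_t = \exp(-\beta\gamma^0_t)$ this is harmonic for the elastic drifted Brownian motion with coefficient $\beta$, so it solves $G_\mu v_\beta = 0$ on $(0,\ell)$ with $v_\beta(\ell)=1$ and the Robin condition $v_\beta'(0)=\beta v_\beta(0)$. Writing $v_\beta(x)=A+Be^{-\mu x}$, imposing the two boundary conditions, and differentiating the resulting linear system at $\beta=0$ (where $v_0\equiv 1$) gives $\mathbf{E}_x[\gamma\circ\tau^\mu_\ell] = -\partial_\beta v_\beta(x)\big|_{\beta=0} = (e^{-\mu x}-e^{-\mu\ell})/\mu$; as a check, letting $\mu\to 0$ recovers the classical value $\ell-x$.

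Finally I would identify the stickiness constant from the domain \eqref{genGmudelta} with $c=0$: substituting $\varphi''(0)=G_\mu\varphi(0)-\mu\varphi'(0)$ into $\eta\varphi''(0)+\eta\mu\delta\varphi'(0)=\varphi'(0)$ rearranges to $\eta(\delta)\,G_\mu\varphi(0)=\varphi'(0)$ with $\eta(\delta)=\eta/(1+\eta(1-\delta)\mu)$, which is precisely the coefficient appearing in $V_t$; multiplying the expected local time by this constant gives the stated closed form. I expect the main obstacle to be the rigorous justification of the first-passage identity under the time change — verifying that no extra local time accrues at the hitting instant and handling the boundary start $x=0$ — together with keeping the normalization of $\gamma$ consistent with the one implicit in \eqref{solmuSemig}; once these conventions are fixed, the remaining steps reduce to routine two-point ODE solves.
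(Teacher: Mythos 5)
Your proposal is correct and reaches the stated formula, but the computational core is organized differently from the paper's proof. The paper obtains the product $\eta(\delta)\,\mathbf{E}_x[\gamma\circ\tau^\mu_\ell]$ in one shot: it solves the inhomogeneous two-point problem $v''+\mu v'=-1$ on $(0,\ell)$ with the second-order (Wentzell-type) boundary condition of \eqref{PDEdelta} at $x=0$ and $v(\ell)=0$, which gives $\mathbf{E}_x[\tau^{\delta,\mu}_\ell]$ in closed form, and then subtracts $\mathbf{E}_x[\tau^\mu_\ell]$ from Corollary \ref{coro:tauLEVEL}; the constant $\eta(\delta)=\eta/(1+\eta(1-\delta)\mu)$ is only identified afterwards by comparing the two cases $\delta\in\{0,1\}$. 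You instead split the product: you get $\mathbf{E}_x[\gamma\circ\tau^\mu_\ell]=(e^{-\mu x}-e^{-\mu\ell})/\mu$ by differentiating at $\beta=0$ the solution $v_\beta(x)=(\beta e^{-\mu x}-\mu-\beta)/(\beta e^{-\mu\ell}-\mu-\beta)$ of the homogeneous Robin problem coming from the elastic representation \eqref{solmuSemig}, and you recover $\eta(\delta)$ by rewriting the domain condition \eqref{genGmudelta} in the form $\eta(\delta)G_\mu\varphi(0)=\varphi'(0)$. Your route buys more: the Laplace transform $v_\beta$ gives the full law of $\gamma\circ\tau^\mu_\ell$ (generalizing the Chung--Williams exponential law the paper cites for $\mu=0$), not just its mean, and the check $\mu\to0$ yielding $\ell-x$ confirms the local-time normalization. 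The price is that you must separately invoke the dictionary between the Wentzell coefficient in front of $G_\mu$ and the coefficient of $\gamma$ in the time change $V_t=t+\eta(\delta)\gamma_t$; the paper's proof relies on exactly the same asserted representation (it is justified elsewhere, in the resolvent computation behind Theorem \ref{thm:NLBVPwithDir}), so this is not a gap relative to the paper's own standard of rigor, and you correctly flag the normalization issue as the point requiring care.
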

Proof postponed, see Section \ref{proof-thm:meanStoppedSWcase}.\\

We recall that $\gamma \circ \tau^\mu_\ell$, the local time accumulated up to $\tau^\mu_\ell$, for $\mu=0$, is an exponential r.v. with parameter $1/(\ell-x)$ (see \cite[Theorem 7.7]{ChugWilliams}). Accordingly, formula \eqref{meanStoppedSWcase} gives $\eta (\ell - x)$. We immediately see that the second derivative in the boundary condition guarantees the sticky behavior. On the other hand, as $\eta \to \infty$, the extra time in formula \eqref{meanStoppedSWcase} if finite. This means that $u^{\prime \prime}(t,0) + \delta \mu u^\prime (t, 0) = 0$ does not imply absorption for $\delta \neq 1$. In particular, \textit{the dynamic boundary condition plays a crucial role}.

The sticky condition introduces a sequence of i.i.d. holding times, say $\{e^\mu_i\}_i$. We mainly focus on the case $\delta=1$ for which we obtain \begin{align*}
     \eta \mathbf{E}_x[\gamma \circ \tau^\mu_\ell] = \mathbf{E}[e^\mu_0] \mathbf{E}_x[\gamma \circ \tau^\mu_\ell].
\end{align*}
Notice that (as proved in the next theorem)
\begin{align}
    \mathbf{P}(e^\mu_0 > t) = \mathbf{P}(e^\mu_0 > t \, | \, X^{1, \mu} \circ e^\mu_0 \in (0, \infty),\, X^{1, \mu} _0=0), \quad t>0
    \label{indHTfromX}
\end{align}
and
\begin{align*}
e^\mu_0 := \inf\{t\,:\, X^{1, \mu}_t >0\,|\, X^{1, \mu}_0 = 0\}
\end{align*}
is the first time $X^{1, \mu}$ hits the interior $(0, \ell)$, $\ell>0$. The zero set $\{t\, : \, X^{1, \mu}_t=0 \}$ has positive Lebesgue measure. The inverse of the associated local time is right-continuous and the jumping times define a countable set. Thus, the set of holding times (at zero) of $X^{1,\mu}$ is countable.

\begin{theorem}
For a (positively drifted) sticky Brownian motion, we have that
\begin{align*}
    \mathbf{P}(e^\mu_0 > t) = e^{-t/\eta }, \quad t>0.
\end{align*}
Moreover, $\{e^\mu_i\}_{i \in \mathbb{N}_0}$ is a sequence of i.i.d. random variables.  
\label{thm:HTrdBM}
\end{theorem}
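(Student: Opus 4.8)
The plan is to show that the boundary point $0$ is a \emph{holding point} of the strong Markov process $X^{1,\mu}$, to deduce the exponential law and the i.i.d. property from the strong Markov property alone, and to read off the precise rate $1/\eta$ from the sticky boundary condition via the time change $V_t = t + \eta\,\gamma_t$.

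First I would establish the memoryless (hence exponential) character. Write $g(t) := \mathbf{P}(e^\mu_0 > t)$, the probability that the process started at $0$ has not yet entered $(0,\ell)$ by time $t$. On $\{e^\mu_0 > t\}$ one has $X^{1,\mu}_t = 0$, so conditioning on $\mathcal{F}_t$ and applying the Markov property from state $0$ at time $t$ gives
\begin{align*}
\mathbf{P}(e^\mu_0 > t+s) = \mathbf{P}(e^\mu_0 > t)\,\mathbf{P}_0(e^\mu_0 > s) = g(t)\,g(s), \quad t,s>0.
\end{align*}
Since $t\mapsto X^{1,\mu}_t$ is right-continuous, $g$ is right-continuous with $g(0^+)=1$, and the only such solutions of the multiplicative equation are $g(t)=e^{-qt}$ for some $q\in[0,\infty]$. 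Strict positivity and finiteness of $\eta$ force $q\in(0,\infty)$: the sticky term prevents $0$ from being instantaneous ($q<\infty$), while the outward flux prevents $0$ from being a trap ($q>0$). The independence \eqref{indHTfromX} of the holding time from the exit location is then the standard by-product of the holding-point structure, the post-$e^\mu_0$ motion being generated by the Markov property applied at the jump out of $0$.

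It remains to identify $q=1/\eta$. I would read the rate off the Feller--Wentzell datum $\eta\,G_\mu u(t,0)=u'(t,0)$ (taking $\delta=1$, $c=0$): the stickiness coefficient $\eta$ and the inward-flux coefficient $1$ enter in the ratio $\eta:1$, which is exactly the mean sojourn at the boundary. The probabilistic mechanism is the time change $X^{1,\mu}_t=X^\mu\circ V^{-1}_t$ with $V_t=t+\eta\,\gamma_t$, under which each unit of boundary local time $\gamma=\gamma^0(X^\mu)$ is stretched into $\eta$ units of real time held at $0$. To make this quantitative I would use Dynkin's formula on $[0,e^\mu_0)$, where $X^{1,\mu}_s=0$ yields $\mathbf{E}_0[\varphi(X^{1,\mu}_{e^\mu_0})]-\varphi(0)=\mathbf{E}_0[e^\mu_0]\,G_\mu\varphi(0)$ for $\varphi\in D(G^{1}_\mu)$, and then combine the boundary relation $\eta\,G_\mu\varphi(0)=\varphi'(0)$ with the exit (jump) measure to conclude $\mathbf{E}_0[e^\mu_0]=\eta$, hence $q=1/\eta$. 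As an independent check, Theorem \ref{thm:meanStoppedSWcase} gives the total sticky time $\eta\,\mathbf{E}_x[\gamma\circ\tau^\mu_\ell]$ accrued at $0$ before reaching $\ell$; splitting it over the successive visits to $0$ must reproduce the per-visit mean $\eta$.

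Finally, the i.i.d. statement follows from the strong Markov property applied at the successive hitting times of $0$: each return to the boundary restarts a fresh copy issued from $0$, so the next holding time is an independent replica of $e^\mu_0$, and since the boundary condition in \eqref{NLBVPedges} is identical at every visit, all the $e^\mu_i$ share the law $\mathrm{Exp}(1/\eta)$. I expect the main obstacle to be the rigorous verification that $0$ is genuinely a holding point, namely that $\mathbf{P}_0(e^\mu_0>0)=1$ with the occupation of $0$ realised as honest positive-length intervals rather than as the singular, nowhere-dense zero set of the plain reflected motion; this is precisely where the second-order (dynamic) term in the boundary condition — equivalently the drift $\eta$ appearing in the inverse local time, cf. Theorem \ref{thm:localTime} and the right-continuity remark preceding the statement — must be invoked to guarantee that the holding times form a countable family of genuine intervals.
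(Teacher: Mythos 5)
Your first and third steps track the paper closely: the multiplicative identity $\mathbf{P}_0(e^\mu_0>t+s)=\mathbf{P}_0(e^\mu_0>t)\,\mathbf{P}_0(e^\mu_0>s)$ obtained from the Markov property, and the i.i.d.\ claim obtained from the strong Markov property at the successive visits of $\{0\}$ (equivalently, from the countability of the jump set of $\gamma^{-1}$), are exactly the paper's argument. The divergence, and the gap, lies in how you identify the rate $q=1/\eta$. You propose Dynkin's formula on $[0,e^\mu_0)$ in the form $\mathbf{E}_0[\varphi(X^{1,\mu}_{e^\mu_0})]-\varphi(0)=\mathbf{E}_0[e^\mu_0]\,G_\mu\varphi(0)$, to be combined with ``the exit (jump) measure''. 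But $X^{1,\mu}$ is the time change $X^\mu\circ V^{-1}$ with the \emph{continuous} clock $V_t=t+\eta\,\gamma_t$: it leaves the boundary continuously, not by a jump (jumps away from $\{0\}$ belong to $X^\bullet$ and $X^{[\nu]}$ in Theorem \ref{thm:NLBVP}, not to the sticky edge process of this theorem). Hence $X^{1,\mu}_{e^\mu_0}=0$, the left-hand side of your Dynkin identity vanishes, and the formula degenerates to $0=\mathbf{E}_0[e^\mu_0]\,G_\mu\varphi(0)$; since for $\varphi$ in the domain \eqref{genGmudelta} with $\delta=1$, $c=0$ one has $G_\mu\varphi(0)=\varphi^\prime(0)/\eta$, generically nonzero, this step cannot produce $\mathbf{E}_0[e^\mu_0]=\eta$. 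There is simply no exit jump measure to invoke here.

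The paper closes this step by a different device, which you would need to reproduce (or replace by an explicit computation of $\mathbf{P}_0(X^{1,\mu}_t=0)$ from the resolvent): it writes the expected extra time in two ways, $\mathbf{E}[V_t-t]=\eta\,\mathbf{E}[\gamma_t]$ and $\mathbf{E}[V_t-t]=\mathbf{E}\left[\int_0^t e_s\,d\gamma_s\right]=\mathbf{E}[\chi]\,\mathbf{E}[\gamma_t]$, where $e_s$ is the step process attaching one i.i.d.\ copy of $\chi$ to each jump of the inverse local time; matching the two gives $\mathbf{E}[\chi]=\eta$, and the limiting cases $\eta\to 0$ (instantaneous reflection) and $\eta\to\infty$ (absorption) select the exponential family with rate $1/\eta$. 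Finally, you are right to single out as the main obstacle the verification that $\{0\}$ is a genuine holding point with honest positive-length occupation intervals rather than a nowhere-dense zero set of positive measure; but you only flag this and do not resolve it, so your argument remains incomplete on precisely the point where the Dynkin step already fails.
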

Proof postponed, see Section \ref{proof-thm:HTrdBM}.\\

\subsection{Non-local (dynamic) boundary conditions for drifted Brownian motions}
\label{sec:NLBVPs}

We study the non-local boundary value problems (NLBVPs)  \eqref{NLBVPedges} and \eqref{NLBVPnodes} providing the probabilistic reading in terms of $X^\mu$. Recall that (see Section \ref{app:NLOs} for details)
\begin{align}
    D^\Phi_t u(t) = \int_0^t u^\prime(s) \phi(t-s)ds \quad \textrm{and} \quad D^\Psi_t u(t) = \int_0^t u^\prime(s) \psi(t-s)ds
    \label{CD-derivativesPhiPsi}
\end{align}
are the Caputo-D\v{z}rba\v{s}jan (type) derivatives with kernel $\phi$ and $\psi$ whereas
\begin{align}
\mathbf{D}^\Upsilon_{x} u(x) = \int_0^\infty \big( u(x) - u(x+z) \big) \kappa(dz)
\label{MderivativeUpsilon}
\end{align}
is the Marchaud (type) derivative with a kernel $\kappa$ which can be also singular. We underline that, in case $\kappa$ is non singular, 
\begin{align}
\mathbf{D}^\Upsilon_x u(x) 
= & c_J \int_0^\infty \big( u(x) - u(x+z) \big) d\, \mathbf{P}(J > z)
\label{NLBOperator}
\end{align}
for some random variable $J$ determining the jumps from the boundary. \\

Let us introduce the process $A=\{A_t\}_{t \geq 0}$ as the additive part determining the jumps from zero to a random point in $(0, \infty)$. It is defined as 
\begin{align}
    A_t := H^\Upsilon \circ L^\Upsilon_t - t, \quad t\geq 0.
    \label{jumpsL}
\end{align}
These jumps are controlled by $\Upsilon$, then we are able to manage random jump $J$ associated with \eqref{MderivativeUpsilon} or \eqref{NLBOperator}. The jump $J$ is deterministic in the special (degenerate) case in which $d\mathbf{P}$ is a Dirac measure. In case $\Upsilon$ is the symbol of subordinator with infinite activity, then $\mathbf{D}^\Upsilon_x u$ has the representation \eqref{MderivativeUpsilon} and $\kappa$ is a singular kernel. The representation \eqref{NLBOperator} is obtained in case $\Upsilon$ is the symbol of a subordinator with finite activity. The process \eqref{jumpsL} can be regarded as the remaining lifetime of a subordinator in a given point, for a detailed discussion on  we refer to \cite{BonColDovPag} and the references therein. \\

The symbol $\Phi$ has been already defined and the symbol $\Psi$ introducing the operator $D^\Psi_t$ via the kernel $\psi$ is a Bernstein symbol as well. The class of Bernstein functions contains infinite symbols characterizing infinite subordinators. Both kernels $\phi$ and $\psi$ are obtained as the tail of the L\'{e}vy measure of the associated subordinator.  Recall that a subordinator and its inverse can be respectively regarded as the hitting time and the local time for some Markov process. Also the symbol $\Upsilon$ belongs to the class of Bernstein symbols. In general the time operators in \eqref{CD-derivativesPhiPsi} are associated with inverses to a subordinator whereas, the space operator \eqref{MderivativeUpsilon} is associated with a subordinator. \\

We present the following probabilistic representations.\\

Recall \eqref{genGmudelta}. Let us introduce the space
\begin{align*}
    D_\phi = \{ \varphi : (0, \infty) \times [0, \ell) \to \mathbb{R}\, \textrm{ s.t. for all } t>0, \, (0,t) \ni s \to \dot{\varphi}(s, 0) \phi(t-s) \textrm{ is in } L^1(0,t) \}.
\end{align*}

\begin{theorem}
For the problem \eqref{NLBVPedges}, the solution $u \in D(G^{1,c}_\mu) \cap D_\phi$ is unique and admits the following representation
\begin{align*}
u(t,x) = \mathbf{E}_x[f(X^\mu \circ V^{-1}_t ), \, t < V \circ \zeta^\mu \wedge V \circ \tau^\mu_\ell], \quad x \in [0, \ell), \; t\geq 0,\, f \in C[0, \ell)
\end{align*}
where 
\begin{itemize}
    \item[-] $V^{-1}_t = \inf\{s \,:\, V_s >t\}$ is the inverse of $V_t := t + \mathcal{H} \circ \eta_\varepsilon \, \gamma^0_t(X^\mu)$ with $\mathcal{H}=\mathcal{H}^\Phi$,
    \item[-] $\tau_\ell^\mu = \inf\{t\,:\, X^\mu_t = \ell\, |\, X^\mu_0 = x \in [0, \ell)\}$,
    \item[-] $\zeta^\mu$ is the elastic lifetime of $X^\mu$, 
    \item[-] $X^{\mu}$ has non negative drift $\mu$ and elastic coefficient $c\geq 0$.
\end{itemize} 
\label{thm:NLBVPwithDir}
\end{theorem}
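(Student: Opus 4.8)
The plan is to pass to the time-Laplace transform, where the non-local boundary condition collapses to an ordinary (second order) boundary value problem whose solution is unique, and then to identify the Laplace transform of the proposed representation with that unique solution.

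\emph{Uniqueness.} Setting $u(\lambda,x) = \int_0^\infty e^{-\lambda t}u(t,x)\,dt$ and using \eqref{LapDPhi}--\eqref{tailLap}, the interior equation $\dot u = G_\mu u$ becomes $G_\mu u(\lambda,\cdot) - \lambda u(\lambda,\cdot) = -f$ on $(0,\ell)$, the Dirichlet datum gives $u(\lambda,\ell)=0$, and the non-local condition at the vertex becomes the Robin-type condition
\begin{align*}
\eta_\varepsilon\,\frac{\Phi(\lambda)}{\lambda}\big(\lambda u(\lambda,0) - f(0)\big) = u'(\lambda,0) - c\,u(\lambda,0).
\end{align*}
For each fixed $\lambda>0$ this is a regular linear boundary value problem as in the proof of Theorem \ref{thm:tauLEVELgen}; the associated homogeneous problem admits only the trivial solution since the boundary conditions are dissipative, so $u(\lambda,\cdot)$ is determined uniquely, and injectivity of the Laplace transform on $D(G^{1,c}_\mu)\cap D_\phi$ yields uniqueness of $u$.

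\emph{Representation.} Writing $q(\lambda,x)$ for the time-Laplace transform of the candidate $\mathbf{E}_x[f(X^\mu\circ V^{-1}_t),\,t<V\circ\zeta^\mu\wedge V\circ\tau^\mu_\ell]$, I would first use \eqref{solmuSemig} to replace the elastic lifetime $\zeta^\mu$ by the multiplicative functional $e^{-c\gamma^0_t(X^\mu)}$, reducing everything to the reflected process $X^\mu$ and its boundary local time $\gamma$. Next I would apply the occupation/change-of-variables identity for the right-continuous strictly increasing clock $V_t = t + \mathcal{H}\circ\eta_\varepsilon\gamma_t$ and its inverse $V^{-1}$, splitting $\int_0^\infty e^{-\lambda t}(\cdots)\,dt$ into a Lebesgue part coming from the continuous rate-one motion of $V$ and a pure-jump part coming from the jumps of the subordinator $\mathcal{H}=\mathcal{H}^\Phi$. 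Because $\mathcal{H}$ increments only on the support of $d\gamma$, every jump of $V$ occurs while $X^\mu$ sits at $0$, so on the jump part $f(X^\mu)=f(0)$; applying the compensation formula together with $\int_0^\infty(1-e^{-\lambda y})\Pi^\Phi(dy)=\Phi(\lambda)$ and the independence of $\mathcal{H}$, through $\mathbf{E}[e^{-\lambda\mathcal{H}_{\eta_\varepsilon\gamma_s}}\mid\gamma_s]=e^{-\eta_\varepsilon\Phi(\lambda)\gamma_s}$, produces the factor $\eta_\varepsilon\Phi(\lambda)/\lambda$ carried by $f(0)$. One then recognises both resulting terms as Feynman--Kac functionals of $X^\mu$ killed on its local time at rate $c+\eta_\varepsilon\Phi(\lambda)$ and absorbed at $\ell$, exactly of the type treated in Theorem \ref{thm:tauLEVELgen}; verifying that their sum $q(\lambda,\cdot)$ satisfies $G_\mu q-\lambda q=-f$, $q(\lambda,\ell)=0$ and the Robin-type condition displayed above completes the identification, and Laplace inversion together with the uniqueness step gives the stated representation.

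\emph{Main obstacle.} The delicate point is the jump bookkeeping of the time change: rigorously justifying the occupation formula for $V^{-1}$ in the presence of the infinitely many small jumps of $\mathcal{H}^\Phi$, showing that the jump contribution is supported precisely on $\{X^\mu=0\}$, and extracting the symbol $\Phi(\lambda)$ so that it matches the Laplace image of the non-local operator $D^\Phi_t$ in \eqref{LapDPhi}. This is exactly where the subordinator $\mathcal{H}^\Phi$ run along the boundary local time is translated into the Caputo--D\v{z}rba\v{s}jan operator appearing in the boundary condition of \eqref{NLBVPedges}.
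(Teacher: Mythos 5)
Your proposal follows essentially the same route as the paper: Laplace transform in time, conversion of the subordinated local-time clock into the factor $\eta_\varepsilon\Phi(\lambda)/\lambda$ via $\mathbf{E}[e^{-\lambda\mathcal{H}_{\eta_\varepsilon\gamma_t}}\mid\gamma_t]=e^{-\eta_\varepsilon\Phi(\lambda)\gamma_t}$, identification of the resolvent with that of a ($\lambda$-dependent) sticky/Robin problem, and uniqueness from the elliptic BVP plus injectivity of the Laplace transform. The only difference is presentational: the paper sidesteps your "main obstacle" (jump bookkeeping via the compensation formula) by conditioning on $X^\mu$ first and integrating $e^{-\lambda V_t}$ by parts, which directly yields the deterministic clock $V_{\lambda,t}=t+\eta_\varepsilon\frac{\Phi(\lambda)}{\lambda}\gamma_t$ and then invokes the known sticky Brownian semigroup rather than a direct Feynman--Kac verification.
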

Proof postponed, see Section \ref{proof-thm:NLBVPwithDir}.\\

The path in Figure \ref{fig:BmNLBVPplateaus} is an example of the behavior near the boundary point $\{0\}$ without drift. For a detailed discussion of this problem in case $\mu=0$ we refer to \cite{dovFBVPfcaa} and \cite{dovNLBVPsmooth}. The novelty here is given by the non-trivial case $\mu \neq 0$. We underline that the non-local dynamic boundary condition in \eqref{NLBVPedges} introduces a sticky behaviour at the boundary point $\{0\}$ for which the process $X^\mu \circ V^{-1}_t$ is Markov only in $(0, \ell)$. The set of holding times at $\{0\}$ is countable and the holding times are i.i.d. with law given by $\mathcal{H}$ with exponential time. \\

Recall \eqref{genGmudelta}. Let us introduce the spaces
\begin{align*}
    D_\psi = \{ \varphi : (0, \infty) \times [0, \ell) \to \mathbb{R}\, \textrm{ s.t. for all } t>0,\, (0,t) \ni s \to  \dot{\varphi}(s, 0) \psi(t-s)  \textrm{ is in } L^1(0,t)\}
\end{align*}
and
\begin{align*}
    D_\kappa = \left\{ \varphi  \in C_b(0, \infty)\,:\, \Big| \int_0^\infty \big( \varphi(x) - \varphi(x+z) \big) \kappa(dz) \Big| < \infty,\; \forall\, x \in [0, \infty) \right\}.
\end{align*}
Notice that $D_\kappa$ contains bounded and locally Lipschitz continuous functions as discussed above in Section \ref{app:NLOs} in case of singular kernels.
\begin{theorem}
For the problem \eqref{NLBVPnodes}, the solution $u \in D(G^{1,c}_\mu) \cap D_\kappa \cap D_\psi$ is unique and admits the representation
\begin{align*}
u(t,x) = \mathbf{E}_x[f(X^\bullet \circ V^{-1}_t )], \quad x \in [0, \infty), \; t\geq 0,\, f \in C_b(0, \infty)
\end{align*}
where 
\begin{itemize}
    \item[-] $V^{-1}_t = \inf\{s \,:\, V_s >t\}$ is the inverse of $V_t := t + \mathcal{H} \circ \eta_\nu\, \gamma^0_t(X^\bullet)$ with $\mathcal{H} = \mathcal{H}^\Psi$,
    \item[-] $X^\bullet_t := X^{\mu}_t + A\circ \gamma^0_t(X^{\mu})$ with $A$ depending on $\Upsilon$ according with \eqref{jumpsL},
    \item[-] $X^{\mu}$ has non positive drift $\mu$ and elastic coefficient $c=|\mu|/2$.
\end{itemize} 
\label{thm:NLBVP}
\end{theorem}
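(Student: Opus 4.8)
The plan is to reduce the non-local dynamic problem to a one-parameter family of resolvent equations by a time-Laplace transform, solve the resulting second-order ODE on $(0,\infty)$ explicitly, and then match the unique bounded solution with the Laplace transform of the proposed probabilistic object. First I would set $\widehat{u}(\lambda, x) = \int_0^\infty e^{-\lambda t} u(t,x)\, dt$. The interior equation becomes the resolvent equation $G_\mu \widehat{u} - \lambda \widehat{u} = -f$ on $(0,\infty)$, while the boundary condition, using \eqref{LapDPhi}--\eqref{tailLap} for $D^\Psi_t$ (which contributes the factor $\Psi(\lambda)/\lambda$) together with the fact that the spatial Marchaud operator $\mathbf{D}^\Upsilon_x$ commutes with the time-Laplace transform, turns into
\begin{align*}
\eta_\nu \big(\lambda \widehat{u}(\lambda, 0) - f(0)\big)\frac{\Psi(\lambda)}{\lambda} = \mathbf{D}^\Upsilon_x \widehat{u}(\lambda, x)\big|_{x=0}.
\end{align*}
Since $\mu < 0$, the only bounded homogeneous solution of the resolvent equation is proportional to $e^{-\Phi(\lambda) x}$ (the decay rate $\Phi(\lambda)$ coincides here with $\Phi^+(\lambda)$), and the key computation $\mathbf{D}^\Upsilon_x e^{-\Phi(\lambda)\,\cdot}\big|_{x=0} = \Upsilon(\Phi(\lambda))$ (the Marchaud operator returns the Bernstein symbol evaluated at the exponent) makes the boundary condition algebraic in the free constant. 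This determines $\widehat{u}(\lambda, \cdot)$ uniquely.

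The crucial simplification I would exploit is the choice $c = |\mu|/2$. With $\mu<0$ this gives $c + \mu/2 = 0$, so the correction kernel $p_2$ in \eqref{density:pmu} vanishes and the elastic semigroup reduces to the reflected-Brownian kernel weighted by the Girsanov factor $e^{-\mu^2 t/4}\,e^{\mu(y-x)/2}$. This is precisely what makes the laws of Theorems \ref{thm:hittingTime} and \ref{thm:localTime} directly usable: the hitting time of $0$ from $x$ is $\mathcal{H}_x$ with symbol $\Phi$ (hence the Laplace factor $e^{-\Phi(\lambda)x}$ of the re-entrant geometry), and the local time at $0$ is $\mathcal{L}_t$. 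I would then compute the Laplace transform of $\mathbf{E}_x[f(X^\bullet \circ V^{-1}_t)]$ by decomposing a trajectory into its excursion in $(0,\infty)$, its sticky holding at $0$, and its jump back into the interior. The time change $V_t = t + \mathcal{H}^\Psi \circ \eta_\nu \gamma^0_t(X^\bullet)$ converts the accumulation of boundary local time into the subordinated clock $\mathcal{H}^\Psi$, producing the factor $\Psi(\lambda)$ on the left-hand side of the boundary condition, while the additive jump process $A$ of \eqref{jumpsL} governs the re-entry law and yields $\Upsilon(\Phi(\lambda))$ on the right-hand side through \eqref{MderivativeUpsilon}. Matching the two Laplace transforms proves the representation.

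For uniqueness I would argue that if $u_1, u_2 \in D(G^{1,c}_\mu)\cap D_\kappa \cap D_\psi$ both solve \eqref{NLBVPnodes}, then $w = u_1 - u_2$ solves the homogeneous problem, so its Laplace transform is a bounded solution of $G_\mu \widehat{w} = \lambda \widehat{w}$, whence $\widehat{w}(\lambda, x) = c(\lambda)\, e^{-\Phi(\lambda)x}$. The homogeneous boundary condition collapses to $c(\lambda)\big(\eta_\nu \Psi(\lambda) - \Upsilon(\Phi(\lambda))\big) = 0$, and since the bracket does not vanish for $\lambda>0$ (the two Bernstein symbols grow at genuinely different rates as $\lambda \to \infty$), we obtain $c(\lambda)\equiv 0$ and thus $w \equiv 0$ after inversion. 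The membership $u \in D_\kappa \cap D_\psi$ is exactly what legitimises every step: $D_\psi$ guarantees that the convolution defining $D^\Psi_t u(\cdot,0)$ is in $L^1$, so the Laplace machinery applies, while $D_\kappa$ together with the bound \eqref{stimaMarchaud} guarantees that the spatial Marchaud operator is finite and may be exchanged with the time-Laplace transform.

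The step I expect to be the main obstacle is the explicit disentangling in the probabilistic matching: one must show that the two modifications of $X^\mu$ -- the temporal stickiness carried by $\mathcal{H}^\Psi$ through $V^{-1}$ and the spatial re-entry carried by $A$ through $\Upsilon$ -- act on the excursion and renewal structure at $0$ without interfering, so that their Laplace contributions factor cleanly into the $\Psi(\lambda)$ term and the $\Upsilon(\Phi(\lambda))$ term. A delicate technical point within this is that $V$ is built from the local time of $X^\bullet$ whereas $A$ is indexed by the local time of $X^\mu$, so one first has to reconcile the two boundary clocks. Controlling this interaction in the presence of the drift, which makes the boundary local time governed by the tempered rather than the stable subordinator, is the technical heart; the cancellation $c+\mu/2=0$ is precisely what keeps the excursion computation tractable.
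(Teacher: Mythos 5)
Your plan is sound in outline, but it follows a genuinely different route from the paper. You work entirely in the Laplace domain: transform in time, solve the resolvent ODE $G_\mu \widehat{u} - \lambda \widehat{u} = -f$ explicitly, use the identity $\mathbf{D}^\Upsilon_x e^{-\Phi(\lambda)\,\cdot}\big|_{x=0} = \Upsilon(\Phi(\lambda))$ to make the boundary condition algebraic, and then match against the Laplace transform of $\mathbf{E}_x[f(X^\bullet \circ V^{-1}_t)]$ by an excursion decomposition. The paper instead factors the problem in two steps: it first strips off the temporal operator $\eta_\nu D^\Psi_t$ by observing that if $u(t,x)=\mathbf{E}_x[f(X^\bullet_t)]$ solves the auxiliary problem with boundary condition $\mathbf{D}^\Upsilon_x u(t,x)|_{x=0}=0$, then the time change $V^{-1}$ handles the dynamic part exactly as in Theorem \ref{thm:NLBVPwithDir}; it then treats the auxiliary problem by an exponential change of measure $u(t,x)=e^{-\mu x/2}e^{-\mu^2 t/4}v(t,x)$ that converts the drifted elastic process into a driftless reflected one (here $c=|\mu|/2$ enters as $v'(0)=(c+\mu/2)v(0)=0$, the same cancellation you identify through the vanishing of $p_2$), cites \cite{BonColDovPag} for the driftless process with the additive jump part $A$, and verifies by a direct limit computation that the exponential tilt commutes with $\mathbf{D}^\Upsilon_x$ at the boundary. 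What the paper's route buys is precisely the avoidance of the step you flag as the main obstacle: the excursion/renewal bookkeeping at $\{0\}$, including the reconciliation of the two boundary clocks $\gamma^0(X^\bullet)$ and $\gamma^0(X^\mu)$, is outsourced to the cited driftless result and to Theorem \ref{thm:NLBVPwithDir} rather than redone. What your route buys is an explicit formula for $\widehat{u}(\lambda,\cdot)$ and an actual uniqueness argument, which the paper's proof does not really supply. Two points in your sketch would need tightening if carried out: the probabilistic matching you defer is genuinely the bulk of the work, and your uniqueness step needs more than ``different growth rates'' --- you should argue that $\lambda \mapsto \eta_\nu\Psi(\lambda) - \Upsilon(\Phi(\lambda))$ is analytic and not identically zero, so its zero set is discrete and the analytic coefficient $c(\lambda)$ must vanish identically (the degenerate coincidence $\eta_\nu\Psi \equiv \Upsilon\circ\Phi$ being excluded or treated separately).
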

Proof postponed, see Section \ref{proof-thm:NLBVP}. \\

We underline that $c=|\mu|/2$ is only due to the sake of simplicity. The parameter $c$ also in this case can be non negative. The non-local boundary condition in \eqref{NLBVPnodes} becomes non homogeneous in case $c>|\mu|/2$. The case $c< |\mu|/2$ leads to interesting probabilistic interpretations (see \cite{SW05, SW06}).\\

The path in Figure \ref{fig:BmNLBVPjumpsANDplateaus} provides and example for the process without drift. The case $\mu=0$ has been treated in \cite{BonColDovPag}. The case $\mu \neq 0$ with slightly different boundary condition has been treated in \cite{ColDovPag} where a clear connection with stochastic resettings emerges. The additive part $A$ introduces a jump as the time $\gamma$ increases. This occurs as the process $X^\mu$ hits the boundary point $\{0\}$. Since $A$ is right-continuous, then the time change $V^{-1}$ acts only after the jump and therefore $X^\mu$ is constant for a random (holding) time at a random level (reached by the jump). For the drifted Brownian motion, the reflection can be realized only by jumps. Jumps can be deterministic or random depending on the kernel $\kappa$. \\

The problems \eqref{NLBVPedges} and \eqref{NLBVPnodes} are respectively associated with edges and vertices. Conversely, the stickiness parameters $\eta_\varepsilon$ and $\eta_\nu$ provide information about processes respectively associated with vertices and edges. We introduce the processes 
\begin{align*}
X^{[\varepsilon]}=\{X^{[\varepsilon]}\}_{t\geq 0} \quad \textrm{and} \quad X^{[\nu]}=\{X^{[\nu]}_t\}_{t\geq 0}
\end{align*}
defined as
\begin{align}
   X^{[\varepsilon]}_t := X^\mu \circ V^{-1}_t, \quad \mu \geq 0 \quad \textrm{given in Theorem \ref{thm:NLBVPwithDir}} 
\end{align}
and
\begin{align}
   X^{[\nu]}_t := X^\bullet \circ V^{-1}_t, \quad \mu \leq 0 \quad \textrm{given in Theorem \ref{thm:NLBVP}} 
\end{align}
together with the hitting times 
\begin{align}
 \tau^{[\varepsilon]}_\ell := \inf\{ t\,:\, X^{[\varepsilon]}_t = \ell\} \quad \textrm{and} \quad \tau^{[\nu]}_J := \inf\{t\,:\, X^{[\nu]}_{t-}  \neq X^{[\nu]}_t \}.
 \label{taunuSecX}
\end{align}
We stress the fact that $\eta_\varepsilon$ and $\eta_\nu$ respectively control the holding times of $X^{[\varepsilon]}$ on vertices and $X^{[\nu]}$ on "edges". This motivates the notation we use here: vertices denoted by $\nu$ and edges denoted by $\varepsilon$ will be introduced in Section \ref{sec:MetricG} below together with metric graphs. Notice that $\tau^{[\varepsilon]}_\ell$ equals in law $\tau^\mu_\ell$ investigated in Corollary \ref{coro:tauLEVEL} and $\tau^{[\nu]}_J$ equals in law $\tau^\mu_0$ investigated in Theorem \ref{thm:hittingTime} for $\mu\leq 0$. We underline the fact that $\tau^{[\nu]}_0$ can be defined as the time at which a jump occurs. The process $X^\bullet$ never hits the boundary point $\{0\}$, it jumps immediately away. Indeed, the jumps of $X^\bullet$ is realized via the additive part $A$ and depends on the jumps of $H^\Upsilon$ from which the right-continuity is preserved. In particular, if $X^{[\nu]}_0=x \in (0, \infty)$, then $X^{[\nu]}$ leaves continuously the starting point $x$ whereas
\begin{align}
X^{[\nu]} \circ \tau^{[\nu]}_J = J \quad \textrm{with} \quad \mathbf{P}(J >0) = 1.
\label{jumpXnode} 
\end{align}
Thus, the process $X^{[\nu]}$ leaves the boundary point $\{0\}$ only by jumps and 
\begin{align}
X^{[\nu]} \circ \tau^\mu_0 = X^{[\nu]} \circ \tau^{[\nu]}_J .
\label{jumpOnlyXnode}
\end{align}
From the behaviour of $X^\bullet$ (we refer to \cite{BonColDovPag}) and the construction of the sticky effect via time change, we obtain some useful characterization for the energy accumulation process.\\ 

The sticky boundary condition introduces holding times on the boundary point $\{0\}$. The set of holding times is Lebesgue measurable and countable. In case $\Phi(\lambda)=\lambda$ (that is $D^\Phi_t u = \dot{u}$) we can identify a sequence of i.i.d. exponential random variables (with parameter $1/\eta_\varepsilon)$ for the time the process $X^{[\varepsilon]}$ spends on $\{0\}$ with each visit (see Theorem \ref{thm:HTrdBM}). For the symbol $\Phi$ of the tempered subordinator we still have a sequence of i.i.d. holding times.
\begin{corollary}
    The holding times at the boundary point $\{0\}$ of the process $X^{[\varepsilon]}$ are independent and identically distributed as $\mathcal{H}^\Phi \circ T_{1/\eta_\varepsilon}$.
    \label{coro:HTverteces}
\end{corollary}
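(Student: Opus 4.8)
The plan is to deduce the general (tempered) statement from the exponential case of Theorem \ref{thm:HTrdBM} by exhibiting $X^{[\varepsilon]}$ as a subordination of the simple sticky motion through the extra clock $\mathcal{H}^\Phi$. Write $\gamma_t := \gamma^0_t(X^\mu)$ and compare the two time changes $V^\flat_t := t + \eta_\varepsilon \gamma_t$ (the case $\Phi(\lambda)=\lambda$) and $V_t := t + \mathcal{H}^\Phi(\eta_\varepsilon \gamma_t)$. Let $Y := X^\mu \circ (V^\flat)^{-1}$ be the simple sticky motion. By Theorem \ref{thm:HTrdBM}, the holding times $\{e_i\}_{i\in\mathbb{N}_0}$ of $Y$ at $\{0\}$ form an i.i.d. sequence with $e_0 \stackrel{d}{=} T_{1/\eta_\varepsilon}$. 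The only difference between $V$ and $V^\flat$ is that the stickiness clock $a_t := \eta_\varepsilon \gamma_t$ is replaced by $\mathcal{H}^\Phi \circ a_t$; since $\mathcal{H}^\Phi$ is independent of $X^\mu$, the idea is to transport the holding intervals of $Y$ through this replacement and read off the law of each resulting sojourn at zero.

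First I would pin down what a holding interval contributes to the clock $a$. If $[r_1,r_2]$ is the $i$-th maximal interval on which $Y=0$, then $(V^\flat)^{-1}$ maps it into the zero set $Z=\{s: X^\mu_s=0\}$; since $X^\mu$ is a reflected drifted Brownian motion, $Z$ has zero Lebesgue measure, so the whole sticky-clock length $e_i=r_2-r_1$ is produced by the singular part, that is $e_i = \Delta_i a$, the increment of $a=\eta_\varepsilon\gamma$ across the $i$-th visit. Because $\gamma$ (hence $a$) is constant along every excursion away from $0$, these increments tile the $a$-axis: the $i$-th sojourn occupies $[a_i, a_i+e_i]$ with $a_i=\sum_{j<i} e_j$.

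Next I would evaluate the same sojourn in the genuine clock $t$. Along the $i$-th visit the underlying point sits at $0$ and $a$ runs over $[a_i,a_i+e_i]$, while $V_t=t+\mathcal{H}^\Phi(a_t)$ advances only through the subordinator term; hence the holding time $h_i$ of $X^{[\varepsilon]}=X^\mu\circ V^{-1}$ equals the increment
\begin{align*}
h_i = \mathcal{H}^\Phi(a_i+e_i) - \mathcal{H}^\Phi(a_i).
\end{align*}
Thus $\{h_i\}$ are increments of the L\'evy process $\mathcal{H}^\Phi$ over the consecutive blocks of lengths $e_0,e_1,\dots$. Conditioning on $\{e_i\}$ and using that $\mathcal{H}^\Phi$ has independent and stationary increments and is independent of $X^\mu$ (so of $\{e_i\}$), the $h_i$ are conditionally independent with $h_i\,|\,e_i \stackrel{d}{=} \mathcal{H}^\Phi_{e_i}$. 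Removing the conditioning through the i.i.d. exponential law $e_0\stackrel{d}{=}T_{1/\eta_\varepsilon}$ gives that $\{h_i\}$ is i.i.d. with $h_0 \stackrel{d}{=} \mathcal{H}^\Phi \circ T_{1/\eta_\varepsilon}$, which is the assertion.

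The main obstacle is the measure-theoretic bookkeeping of the first step: making rigorous that each holding interval of $Y$ collapses onto an $a$-increment, equivalently that the sojourns at $0$ of $X^{[\varepsilon]}$ are exactly the jumps contributed by the $\mathcal{H}^\Phi$-part of the inverse local time $\hat\sigma_\ell=\sigma_\ell+\mathcal{H}^\Phi(\eta_\varepsilon\ell)$, so that they are countable and non-overlapping. Once the identification $h_i=\mathcal{H}^\Phi(a_i+e_i)-\mathcal{H}^\Phi(a_i)$ is secured on the common probability space, the independence and the common law follow directly from the L\'evy property of $\mathcal{H}^\Phi$ together with Theorem \ref{thm:HTrdBM}; the drift sign $\mu\ge 0$ enters only through Theorem \ref{thm:localTime} and Theorem \ref{thm:hittingTime}, which guarantee that the excursion and local-time structure of $X^\mu$ is the one used above.
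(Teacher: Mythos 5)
Your argument is correct in substance and, notably, supplies what the paper itself does not: the paper's proof of Corollary \ref{coro:HTverteces} is a one-line deferral to \cite[Theorem 4]{dovFBVPESfcaa} and \cite[Theorem 11]{BonDovfcaa} (an adaptation of the driftless elastic--sticky construction), whereas you derive the statement directly from Theorem \ref{thm:HTrdBM} by comparing the two clocks $V^\flat_t = t+\eta_\varepsilon\gamma_t$ and $V_t = t+\mathcal{H}^\Phi(\eta_\varepsilon\gamma_t)$ and then using stationarity and independence of the increments of $\mathcal{H}^\Phi$ over the consecutive exponential blocks $[a_i,a_i+e_i]$, $a_i=\sum_{j<i}e_j$. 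The final step, $h_i=\mathcal{H}^\Phi(a_i+e_i)-\mathcal{H}^\Phi(a_i)$ together with $\mathcal{H}^\Phi\perp X^\mu$, giving $\{h_i\}$ i.i.d.\ with law $\mathcal{H}^\Phi\circ T_{1/\eta_\varepsilon}$, is exactly the mechanism the paper uses informally elsewhere (Theorem \ref{thm:tauEwithH}, the tables of Section \ref{sec:Stat}), so your route is the natural self-contained one and buys a proof where the paper only offers a citation.

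The step you flag as the main obstacle is indeed the only real gap, and it is slightly more than bookkeeping. Since $V^\flat$ is continuous and strictly increasing, the zero set of $Y=X^\mu\circ(V^\flat)^{-1}$ is the homeomorphic image $V^\flat(Z)$ of the perfect, nowhere-dense set $Z=\{s:X^\mu_s=0\}$: it has Lebesgue measure $\eta_\varepsilon\gamma_\infty>0$ but contains no intervals, so ``the $i$-th maximal interval on which $Y=0$'' does not literally exist and $e_i$ cannot be read off as an increment of $\eta_\varepsilon\gamma$ across such an interval. Dually, the literal maximal intervals of constancy of $X^{[\varepsilon]}$ at $\{0\}$ are the individual jumps of the infinite-activity subordinator $\mathcal{H}^\Phi$ over $[0,\eta_\varepsilon\gamma_\infty]$, i.e.\ a Poisson point process with intensity governed by $\Pi^\Phi$, not an i.i.d.\ sequence with law $\mathcal{H}^\Phi\circ T_{1/\eta_\varepsilon}$. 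Both the corollary and Theorem \ref{thm:HTrdBM} must therefore be read at the level of the local-time/excursion decomposition: the ``$i$-th holding time'' is the aggregate sojourn at $\{0\}$ accumulated over the $i$-th exponential block of local time, which is precisely the quantity you compute. Once the statement is phrased that way your Lévy-increment argument closes the proof; as written, the passage from ``maximal intervals'' to ``blocks of local time'' is asserted rather than established, but this imprecision is inherited from the paper's own formulation rather than introduced by you.
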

\begin{proof}
An adaptation of the the proof of \cite[Theorem 4]{dovFBVPESfcaa} or \cite[Theorem 11]{BonDovfcaa}. 
\end{proof}

Recall that $X^\bullet$ is right-continuous, then the sticky condition at $\{0\}$ realizes holding times only after the jumps (away from $\{0\}$).

\begin{corollary}
    The holding times after the jumps of $X^{[\nu]}$ are independent and identically distributed as $\mathcal{H}^\Psi \circ T_{1/\eta_\nu}$.
    \label{coro:HTnodes}
\end{corollary}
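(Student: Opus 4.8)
The plan is to follow the route already used for Corollary~\ref{coro:HTverteces}, adapting the local-time arguments of \cite{dovFBVPESfcaa,BonDovfcaa} to the jumping process $X^{[\nu]}=X^\bullet\circ V^{-1}$ of Theorem~\ref{thm:NLBVP}. The starting observation, recalled just before the statement and encoded in \eqref{jumpXnode}--\eqref{jumpOnlyXnode}, is that the holding times of $X^{[\nu]}$ are produced exclusively by the flat stretches of the inverse time change $V^{-1}$, that is by the upward jumps of $V_t=t+\mathcal H^\Psi\circ\eta_\nu\,\gamma_t$, where $\gamma_t$ is the boundary local time driving both $V$ and the additive part $A$ of \eqref{jumpsL}. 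Since $A$ is right-continuous and increases exactly when $\gamma$ increases (i.e.\ when the underlying reflected drifted motion touches $\{0\}$), each increment of the local clock is first used by $A$ to throw the process to a positive level $J$, with $\mathbf P(J>0)=1$, and only afterwards slowed down by $V^{-1}$. First I would make this bookkeeping precise: one shows that the jump epochs $\tau^{[\nu]}_J$ and the resting epochs are in bijection, so that there is exactly one holding interval per jump and the level kept constant during that interval is the post-jump value $J$.

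Next I would compute the law of a single holding time. In the classical sticky case $\Psi(\lambda)=\lambda$ (so that $D^\Psi_t u=\dot u$ and the subordinated clock degenerates to $\eta_\nu\gamma_t$), the node-analogue of Theorem~\ref{thm:HTrdBM} gives that the $\eta_\nu$-scaled local time accumulated during a resting epoch is an exponential random variable $T_{1/\eta_\nu}$ with parameter $1/\eta_\nu$. Passing to a general Bernstein symbol $\Psi$ replaces this clock by the subordinator $\mathcal H^\Psi$ run over the same local-time amount, so that the holding time equals $\mathcal H^\Psi\circ T_{1/\eta_\nu}$. This identification is the subordination step and is carried out by inverting the Laplace transform of $V$ exactly as in the inverse-Gaussian computation \eqref{genFunOfH}, now with $\Psi$ in place of $\Phi$; crucially, it is insensitive to the value $J$ reached by the jump, because the additive part $A$ (driven by $\Upsilon$) acts in space while the waiting is governed by the subordinated local-time clock, and the two act on disjoint parts of the construction.

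Finally I would establish the i.i.d.\ property by a regeneration argument. At each return of the underlying motion to $\{0\}$ the strong Markov property restarts it afresh, the successive $\eta_\nu$-scaled local-time amounts are i.i.d.\ exponentials, and $\mathcal H^\Psi$ has stationary independent increments and is independent of the base motion; moreover the jump sizes $J$ driven by $\Upsilon$ are independent of the resting clock. Combining these independences yields that the holding times after the jumps form an i.i.d.\ sequence distributed as $\mathcal H^\Psi\circ T_{1/\eta_\nu}$. The main obstacle I expect is precisely the bookkeeping of the first paragraph: because $X^\bullet$ never sits at $\{0\}$ but leaves it by a jump, one must verify that the sticky time change still attaches one---and only one---holding interval to each zero-visit, so that the resting law coincides with the purely reflecting (edge) case of Corollary~\ref{coro:HTverteces} with $\Phi$ replaced by $\Psi$ rather than being corrupted by the jump. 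Once this is settled, the distributional and independence statements follow as above, in full analogy with the analysis of $X^\bullet$ in \cite{BonColDovPag}.
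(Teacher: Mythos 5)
Your proposal is correct and follows essentially the same route the paper intends: the paper's own proof of this corollary is only a one-line pointer to an adaptation of \cite[Theorem 4]{dovFBVPESfcaa} and \cite[Theorem 11]{BonDovfcaa}, and your sketch (one holding interval per jump of $V$, the exponential local-time clock with parameter $1/\eta_\nu$ from the analogue of Theorem \ref{thm:HTrdBM}, subordination by $\mathcal{H}^\Psi$, and regeneration via the strong Markov property for independence) is precisely that adaptation, including the correct handling of the right-continuity of $X^\bullet$ so that each zero-visit is replaced by a jump followed by exactly one resting epoch.
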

\begin{proof}
    Also here, an adaptation of the the proof of \cite[Theorem 4]{dovFBVPESfcaa} or \cite[Theorem 11]{BonDovfcaa}. 
\end{proof}
For the process $X^{[\nu]}$ we summarize the following facts:
\begin{itemize}
    \item[i)]  A jump $J$ away from $\{0\}$ is realized according with $\Upsilon$;
    \item[ii)]  After a jump, the process $X^{[\nu]}$ is constant according with the holding times in Corollary \ref{coro:HTnodes}, then it starts afresh. 
\end{itemize}

We underline that
    \begin{align}
        \mathbf{E}_x[\tau^{[\nu]}_0] = \mathbf{E}_x [\tau^{\mu}_0 ] = \frac{x}{|\mu |}
    \end{align}
and 
    \begin{align}
        \mathbf{E}_x [V \circ (\tau^\mu_\ell \wedge \zeta^\mu)] = \mathbf{E}_x [V \circ \tau^\mu_\ell \wedge V \circ \zeta^\mu]
        \label{meanVcomposto1}
    \end{align}
    gives the mean sojourn time on $[0,\ell)$ of $X^{[\varepsilon]}$, $\mathbf{E}_x[\tau^{[\varepsilon]}_\ell]$. For $c=0$ ($\zeta^\mu=\infty$ almost surely), we simply write
\begin{align}
        \mathbf{E}_x[\tau^{[\varepsilon]}_\ell] = \mathbf{E}_x [V \circ \tau^\mu_\ell]= \mathbf{E}_x[\tau^\mu_\ell] + \eta_\varepsilon \Phi^\prime(0) \mathbf{E}_x[\gamma \circ \tau^\mu_\ell]
        \label{meanVcomposto2}
    \end{align}
for the time change $V_t = t + \mathcal{H} \circ \eta_\varepsilon \gamma_t$ where $\mathcal{H}$ is characterized by the symbol $\Phi$. Notice that 
\begin{align*}
\mathbf{E}_0[\mathcal{H} \circ e^\mu_0] = \mathbf{E}[e^\mu_0] \mathbf{E}_0[\mathcal{H}_1] =  \eta_\varepsilon \Phi^\prime(0).
\end{align*}
Our case is concerned with $\Phi$ in \eqref{symbPhi} and
    \begin{align}
        \mathbf{E}_x[\tau^{[\varepsilon]}_\ell] = \mathbf{E}_x[\tau^\mu_\ell] + \eta_\varepsilon \frac{1}{|\mu|}  \mathbf{E}_x[\gamma \circ \tau^\mu_\ell].
        \label{meanHTXvarepsilon}
    \end{align}

We observe that $\mathbf{E}[\mathcal{H}_t] = t \Phi^\prime(0)$ as we can deduce from $\mathbf{E}[e^{-\lambda \mathcal{H}_t}] = e^{-t \Phi(\lambda)}$. In particular
\begin{align*}
\Phi^\prime(0) = \lim_{\lambda \to 0} \frac{\Phi(\lambda)}{\lambda} < \infty
\end{align*}
for the symbol $\Phi$ as defined in the present work (see \eqref{symbPhi}). This is generally not true. Moreover, it provides information also in terms of \eqref{observeFiniteMean}.

\begin{corollary}
The mean exit time of the process $X^{[\varepsilon]}$ writes
    \begin{align}
        \mathbf{E}_x[\tau^{[\varepsilon]}_\ell] = \frac{\ell - x}{\mu}  - \frac{e^{-\mu x} - e^{-\mu \ell}}{\mu^2} \left( 1 - \eta_\varepsilon  \right) , \quad \eta_\varepsilon \geq 0, \quad x \in [0, \ell).
        \label{meanExitTime}
    \end{align}    
\end{corollary}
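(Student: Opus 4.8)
The plan is to read off $\mathbf{E}_x[\tau^{[\varepsilon]}_\ell]$ directly from the time-change decomposition already recorded in \eqref{meanVcomposto2}--\eqref{meanHTXvarepsilon} and then substitute the two mean quantities appearing there, each of which is supplied by an earlier result. Since we are in the case $c=0$, the elastic killing is switched off ($\zeta^\mu=\infty$ almost surely), so by \eqref{meanVcomposto1} the exit time of $X^{[\varepsilon]}=X^\mu\circ V^{-1}$ is the additive functional $V$ evaluated at the hitting time $\tau^\mu_\ell$ of $X^\mu$. For the time change $V_t=t+\mathcal{H}\circ\eta_\varepsilon\gamma_t$ with $\mathcal{H}=\mathcal{H}^\Phi$ this yields
\[
\mathbf{E}_x[\tau^{[\varepsilon]}_\ell]=\mathbf{E}_x[\tau^\mu_\ell]+\eta_\varepsilon\,\Phi^\prime(0)\,\mathbf{E}_x[\gamma\circ\tau^\mu_\ell],
\]
where the weight $\eta_\varepsilon\Phi^\prime(0)$ comes from $\mathbf{E}_0[\mathcal{H}_1]=\Phi^\prime(0)$. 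For the symbol \eqref{symbPhi} one computes $\Phi^\prime(0)=\lim_{\lambda\to0}\Phi(\lambda)/\lambda=1/(2\sqrt{\theta})=1/|\mu|$, and since here $\mu>0$ this is simply $1/\mu$, which is exactly the weight appearing in \eqref{meanHTXvarepsilon}.

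Next I would insert the two means. Corollary \ref{coro:tauLEVEL} gives $\mathbf{E}_x[\tau^\mu_\ell]=\frac{\ell-x}{\mu}-\frac{e^{-\mu x}-e^{-\mu\ell}}{\mu^2}$, while Theorem \ref{thm:meanStoppedSWcase} evaluated at $\delta=1$ isolates the local-time mean: the prefactor $1/(1+\eta(1-\delta)\mu)$ collapses to $1$, so $\mathbf{E}_x[\gamma\circ\tau^\mu_\ell]=\frac{e^{-\mu x}-e^{-\mu\ell}}{\mu}$ (this quantity is a property of $X^\mu$ and $\tau^\mu_\ell$ and hence independent of $\delta$, so any admissible choice extracts the same value). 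Substituting both into the decomposition with $\Phi^\prime(0)=1/\mu$ gives
\[
\mathbf{E}_x[\tau^{[\varepsilon]}_\ell]=\frac{\ell-x}{\mu}-\frac{e^{-\mu x}-e^{-\mu\ell}}{\mu^2}+\eta_\varepsilon\,\frac{e^{-\mu x}-e^{-\mu\ell}}{\mu^2},
\]
and collecting the last two terms produces the factor $(1-\eta_\varepsilon)$ claimed in \eqref{meanExitTime}.

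There is essentially no analytic obstacle: the statement is a bookkeeping consequence of three previously established identities, and the only points requiring a moment's care are (i) confirming that $c=0$ removes the elastic lifetime so that $V\circ(\tau^\mu_\ell\wedge\zeta^\mu)=V\circ\tau^\mu_\ell$ as used in \eqref{meanVcomposto1}, and (ii) the evaluation $\Phi^\prime(0)=1/\mu$ together with $\mathbf{E}_0[\mathcal{H}_1]=\Phi^\prime(0)$, which converts the stickiness weight into the clean coefficient $\eta_\varepsilon/\mu$. As consistency checks one notes that $\eta_\varepsilon=0$ recovers $\mathbf{E}_x[\tau^\mu_\ell]$ of Corollary \ref{coro:tauLEVEL}, and letting $\mu\to0$ recovers $(\ell^2-x^2)/2$ in agreement with the remark following that corollary. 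Since every constituent formula is already proved, the result follows by direct substitution.
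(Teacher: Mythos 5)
Your proof is correct and follows essentially the same route as the paper: the paper's own (one-line) proof likewise combines \eqref{meanHTXvarepsilon} with Theorem \ref{thm:meanStoppedSWcase} at $\delta=1$ (to extract $\mathbf{E}_x[\gamma\circ\tau^\mu_\ell]=(e^{-\mu x}-e^{-\mu\ell})/\mu$) and Corollary \ref{coro:tauLEVEL}, exactly as you do. One small caveat on your final consistency check: it is only valid for $\eta_\varepsilon=0$, since for $\eta_\varepsilon>0$ the full expression \eqref{meanExitTime} diverges as $\mu\to 0$, consistently with the stickiness weight $\Phi^\prime(0)=1/|\mu|\to\infty$.
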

\begin{proof}
We used \eqref{meanHTXvarepsilon} and Theorem \ref{thm:meanStoppedSWcase} in case $\delta=1$ to handle the dynamic boundary condition in \eqref{NLBVPedges}.    
\end{proof}
 The process $X^{[\varepsilon]}$ has instantaneous reflections only in case $\eta_\varepsilon =0$. In this case, $X^{[\varepsilon]}$ equals in law $X^\mu$. We observe that, under the the second order boundary condition given by $\delta=0$ in Theorem \ref{thm:meanStoppedSWcase}, we get the mean exit time
    \begin{align}
        \frac{\ell - x}{\mu}  - \frac{e^{-\mu x} - e^{-\mu \ell}}{\mu^2} \left( 1 - \frac{\eta_\varepsilon}{\eta_\varepsilon \mu + 1} \right) , \quad \eta_\varepsilon \geq 0, \quad x \in [0, \ell)
    \end{align}
in place of \eqref{meanExitTime}.

\subsection{Non-singular kernels}
\label{sec:UpsilonExpJumps}
    We deal with the case in which $H^\Upsilon$ has finite activity with representation in terms of the compound process   
    \begin{align}
        \sum_{k=1}^{N_t} J_k \quad \textrm{with} \quad J_k \sim J\; \forall\, k
    \end{align}
    where $J$ is an exponential r.v. (with parameter $1/\eta_\varepsilon$) and $N_t$ is a Poisson process (with rate $1$). We have exponential jump intensity measure with
    \begin{align}
        \mathbf{E}_0\left[\exp \left( - \lambda \sum_{k=1}^{N_t} J_k \right) \right] = \exp \left(- t \int_0^\infty (1- e^{-\lambda y}) (1/\eta_\varepsilon) e^{-(1/\eta_\varepsilon)y} dy \right), \quad \lambda > 0.
    \end{align}
    The associated operator turns out to be 
    \begin{align}
        \mathbf{D}^\Upsilon_{x-} u(x) = \int_0^x u^\prime(x-y)\mathbf{P}(J>y)dy = \int_0^\infty (u(x) - u(x-y)) \kappa(dy)
    \end{align}
    where 
    $$\kappa(dy)=(1/\eta_\varepsilon) e^{-(1/\eta_\varepsilon) y} dy$$
    is the jump intensity measure. The additive term $A_t = H^\Upsilon \circ L^\Upsilon_t - t$ introduces (see \cite{BonColDovPag}) the formal adjoint to $\mathbf{D}^\Upsilon_{x-}$ written as
    \begin{align}
        \mathbf{D}^\Upsilon_{x+} = \int_0^\infty (u(x) - u(x+y)) \kappa(dy)
    \end{align}
    which is, in our notation, the operator $\mathbf{D}^\Upsilon_x$ with symbol 
    \begin{align}
        \Upsilon(\lambda) = \frac{\lambda}{\lambda + 1/\eta_\varepsilon}, \quad \lambda>0.
        \label{UpsilonExp}
    \end{align}
    As the local time $\gamma^0_t(X^\mu)$ increases, then $A \circ \gamma^0_t(X^\mu)$ jumps according with $J$.

\section{The processes $E$ and $W$}

We consider $N$ regions ($N$ star vertices). Let $(m_1, \ldots, m_N)$ be the vector of given magnitudes associated with the regions and therefore with the vertices. Let $(v_1, \ldots, v_N)$ be the vector of given velocities characterizing the seismic waves propagation for the regions. A seismic wave is characterized by the energy accumulation $E$ and the wave propagation $W$.

\begin{assumption}
For the region $r \in \{1,2, \ldots, N\}$:
\begin{itemize}
    \item $m_r \in (0,m_1]$ and $m_1 >0$; 
    \item $h_r=h(m_r)$ for a good (decreasing) function $h$;
    \item $v_r \geq 0$, $\forall\, r$.
\end{itemize}
\label{ASS2}
\end{assumption}

For the process $E$ and $W$ we define
\begin{align}
\zeta^E_{abs} = \inf\{ t\,:\, E_{t-} \neq E_t < h_*\} \quad \textrm{and} \quad \zeta^W_{reg} = \inf\{t\,:\, W_t =\ell\}.
\label{lifeABSandREG}
\end{align}
The absorption time $\zeta^E_{abs}$ is written in terms of jumping times and the threshold
\begin{align*}
h_* < \min_r \{h_r \}
\end{align*}
representing the low level of energy for which no events occur. The process $W$ sticks to $\{0\}$ for the time the process $E$ started from $\{h_r\}$ spends to reach $\{0\}$, then an event occurs and $W>0$. If $E$ jumps to a level lower or equal than $h_*$, then as $E$ hits zero no event occurs. Recall that our model deals with negatively drifted Brownian motion for $E$, thus the jump of $E$ gives the energy for a given location to be released in a random time, the hitting time of $\{0\}$. We observe that the time $\zeta^W_{reg}$ will be associated in Section \ref{sec:MetricG} with the exit time from a star graph with edges of length $\ell$. Thus, all the regions will be associated to the same star graph. We consider the same distance between regions. With no effort we can consider the realistic case in which 
\begin{align*}
\zeta^W_{reg} (r) = \inf \{t\,:\, W_t = \ell_r\}
\end{align*} 
for a given region $r$. There are different distances between regions.\\


It is worth underlining the meaning of $i$ and $r$ used below. Recall Assumption \ref{ASS1}.
\begin{notation}
A region is denoted by $r \in \{1,2,\ldots, N\}$. We use the index $i \in \mathbb{N}$ for the $i$-th excursion of the process ($E$ or $W$), that is for the $i$-th seismic wave. In particular:
\begin{itemize}
\item[-] $E$ may have many excursions on $(0, \infty)$ associated with each regions;
\item[-] $W$ may have many excursions on $(0, \ell)$ associated with each regions;
\item[-] $i=r \in \{1,2, \ldots, N\}$ iff one excursion of a process can be associated with each regions. 
\end{itemize} 
\end{notation}

In the next section we introduce $E$ and $W$ in detail. For example, the first seismic wave is characterized by the couple $(E_t, W_t)$ up to time $\tau_1$. The reader can have a clear picture of our construction by taking a glance at Figure \ref{fig:relations} at the end of the work.

\subsection{The energy accumulation process}
\label{sec:EnergyAccumulation}
Let $E = \{E_t\}_{t\geq 0}$ be the process such that:
\begin{itemize}
\item[i)] $E_0=h_0 > 0$; 
\item[ii)] $E$ can be associated with the negatively drifted process $X^{\mu}$ on $(0, \infty)$ as follows
\begin{align*}
E_t = \left\lbrace
\begin{array}{lrll}
X^{\mu_1}_t, & 0 < &t& < \tau_1^E,\\
h_1, & \tau_1^E \leq &t& <  \tau_1,\\
\ldots \\
X^{\mu_i}_t, & \tau_{i-1} < &t& < \tau_{i-1} + \tau^E_i,\\
h_i, & \tau_{i-1} + \tau^E_i \leq &t& < \tau_i,\\
\ldots
\end{array}
\right. , \quad i \in \{1,2, \ldots, \mathfrak{N}\}
\end{align*}
where
\begin{align}
    \tau_i := \sum_{j \leq i} ( \tau^E_j + \tau^W_j ), \quad i \in \mathbb{N}, \quad \tau_0=0,
    \label{sumT}
\end{align}
and $|\mu_i| \in (m_1, \ldots, m_N)$ with $N \leq \mathfrak{N} \in \mathbb{N}$;
\item[iii)] If $\mathfrak{N}$ equals the number $N$ of regions, then $E$ is a bijection between the set of regions and the family of drifted Brownian motions. Then, for the realization $i$ we have $\mu_i=\mu_r$ for the region $r \in \{1, \ldots, N\}$ ;
\item[iv)] $E$ is a switching process. Indeed, the process changes its drift according with the vector of magnitudes. The switching rule depends on $W$.
\end{itemize}

\begin{remark}
To have a clear picture, we underline that, in the region $r$, the $i$-th seismic wave is given by the couple $(E,W)$ where $E$ is a drifted Brownian motion with $\mu_i = - m_r$.  
\end{remark}

Under the previous characterization we say that
\begin{center}
$E$ describes the {\it earthquake energy accumulation}.
\end{center}

For the sequence $\tau^E = \{\tau^E_i\}_{i}$ we say that
\begin{center}
    $\tau^E$ describes the {\it accumulation times (release of the accumulated energy)} .
\end{center}
That is, the time needed to reach an appropriate level of energy for which in a given region (or edge) an event (or propagation) occurs. Otherwise, the process $E$ never reaches (jumps) a level greater that $h_*$ and then, the release of energy is not enough to create the seismic wave. In this case, the process $E$ still runs as time passes with lower and lower jumps below the threshold $h_*$ whereas (as described below) in the meanwhile the process $W$ sticks to $\{0\}$.

\subsection{The seismic waves propagation process}
\label{sec:SeismicWavePropagation}
Let $W = \{W_t\}_{t\geq 0}$ be the process on $[0, \ell)$ such that:
\begin{itemize}
\item[i)] $W_0 = 0$;  
\item[ii)] $W$ can be associated with the positively drifted process $X^{\mu}$ on $[0, \infty)$ as follows
\begin{align*}
W_t = \left\lbrace
\begin{array}{lrll}
0, & 0 \leq & t & < \tau^E_1  \wedge \zeta^E_{abs},\\
X^{\mu_1}_t, &  \tau^E_1 \leq & t & < \tau_1 \wedge \zeta^W_{reg} \wedge \zeta^W_{kil}, \\
\ldots \\
0, & \tau_{i-1} \leq & t & < ( \tau_{i-1} + \tau^E_{i} ) \wedge \zeta^E_{abs},\\
X^{\mu_i}_t, & \tau_{i-1} + \tau^E_i \leq & t & < \tau_i \wedge \zeta^W_{reg} \wedge \zeta^W_{kil},\\
\ldots
\end{array}
\right. \quad i \in \{ 1,2, \ldots, \mathfrak{N}\}
\end{align*}
where $\tau_i$ has been defined in \eqref{sumT} and $\mu_i \in (v_1, \ldots, v_N)$ with $N \leq \mathfrak{N} \in \mathbb{N}$. Moreover: 
\begin{itemize}
    \item[-] $\zeta^E_{abs}$ is an absorption time such that
\begin{align}
	W_t = 0, \quad t \geq \zeta^E_{abs},
\end{align}
    \item[-] $\zeta^W_{reg}$ is a switching time (between regions) such that
\begin{align}
      X^{v_{r}} \circ (\zeta^W_{reg} + t) = X^{v_{r+1}}_t
      \label{ruleZetaReg}
\end{align}
where $(v_1, \ldots, v_N)$ is the vector of velocities characterizing the propagation;
\item[-] $\zeta^W_{kil}$ is a killing time do not associate with $\zeta^E_{abs}$. However, we still say that
\begin{align}
W_t = 0, \quad t \geq \zeta^W_{kill}
\end{align}
meaning no wave after the kill;
\end{itemize}
\item[iii)] If $\mathfrak{N}$ equals the number $N$ of regions, then $W$ visits a region never to return; 
\item[iv)] $W$ is a switching process on $[0,\ell)$ and the sequence $\{\tau^E_i + \zeta^W_{reg}\}_{i}$ is a sequence of switching times for $W$ only if $\mathfrak{N}=N$. Indeed, the process changes its drift according with the vector of velocities.
\end{itemize}

The process $W$ starts from $\{0\}$ after a random time, then it runs until the first hitting time with the level $\ell$. Once it reaches $\ell$, after a resetting at $\{0\}$, it stars as a new process with different drift under the same rule for the holding time. The holding times for $W$ are given by the sequence $\tau^E$.

\begin{remark}
We underline that, in the region $r$, the $i$-th seismic wave is given by the couple $(E,W)$ where $W$ is a drifted Brownian motion with $\mu_i = v_r$.  
\end{remark}

Under the previous characterization we say that
\begin{center}
$W$  describes the {\it seismic waves propagation}.
\end{center}

For the sequence $\tau^W = \{\tau^W_i\}_i$ we say that
\begin{center}
    $\tau^W$ describes the {\it propagation times in a region},
\end{center}
that is, the time the seismic waves spend to propagate in some directions (or edges) of the same region (star graph) whereas 
\begin{center}
    $\zeta^W_{reg}$ describes the {\it propagation time between regions}  
\end{center}
that is, the time the seismic waves spend to arrive at the next region (next vertex). 


\begin{remark}
Recall that our aim is to consider $X^{[\varepsilon]}$ to describe $W$ and $X^{[\nu]}$ to describe $E$. The previous assumption can be associated with the stickiness parameter $\eta_\varepsilon$. If $\eta_\varepsilon = 0$, then the process $X^{[\varepsilon]}$ reflects instantaneously (no positive holding times) and there are no excursions for $X^{[\nu]}$ describing the energy accumulation $E$. 
\end{remark}

\begin{remark}
We observe that $\{v_i\}_i$ is a sequence of given constants. In the model this sequence says that the earthquake in a region will propagate according with a Brownian motion with a given velocity. A given velocity characterizes a given region and depends from ground and magnitude among other variables for that region.  
\end{remark}
        
\begin{remark}        
In place of the previous characterization we may also consider $v$ as a function of $E$, thus the process $W$, region by region, depends also from the random energy accumulated in the last visited region. 
\end{remark}

\section{The processes $E$ and $W$ via PDEs}

Recall \eqref{EtoRDBM} and \eqref{WtoRDBM} and focus on \eqref{sumT}. We exploit here the fact that, in case $\tau^W_i = 0$ $\forall\, i$, then $\{E_t , t < \tau^E\}$ can be identified with the excursion of $X^{[\nu]}$ as well as, in case $\tau^E_i=0$ $\forall\, i$, then $\{W_t, t<\tau^W\}$ can be identified with the excursion of $X^{[\varepsilon]}$. This is done in a weak sense, that is we have equivalence in law. From the Brownian motion $X^\mu$ we characterize the energy accumulation process $E$ and the wave propagation process $W$ in terms of the processes $X^{[\varepsilon]}$ and $X^{[\nu]}$ by considering the entire paths on $[0, \infty)$. We first discuss on the random times $\tau^E =\{\tau^E_i\}_i$ and $\tau^W=\{\tau^W_i\}_i$ in terms of the occupation time $\Gamma$ and the local time $\gamma$ of $X^{[\varepsilon]}$ on $[0, \ell)$. Let us define the set 
\begin{align*}
     J^\varepsilon := \{t\,:\, \Gamma^{-1}_{t-} \neq \Gamma^{-1}_t\}, \quad \Gamma_t = \Gamma^{[\varepsilon]}_t := \int_0^{t \wedge \tau^{[\varepsilon]}_\ell} \mathbf{1}_{(0, \ell)}(X^{[\varepsilon]}_s)ds 
\end{align*}
for which
\begin{align}
    \tau^E \subset \{ \Gamma^{-1}_t - \Gamma^{-1}_{t-} \,:\, t \in J^\varepsilon\} =:  J^\varepsilon_\Delta 
\label{Jedge}    
\end{align}
and the set
\begin{align*}
    J^\nu := \{t\,:\, \gamma^{-1}_{t-} \neq \gamma^{-1}_t\}, \quad \gamma_t = \gamma^{[\varepsilon]}_t := \int_0^{t \wedge \tau^{[\varepsilon]}_\ell} \mathbf{1}_{\{0\}}(X^{[\varepsilon]}_s)ds
\end{align*}
for which 
\begin{align}
    \tau^W \subset \{ \gamma^{-1}_t - \gamma^{-1}_{t-}\,:\, t \in J^\nu\}=: J^\nu_\Delta.
    \label{Jnodes}
\end{align}
Without abuse of notation we write the local time $\gamma_t$ as the integral above. Recall that $\gamma_t$ is defined as the continuous additive function increasing only as $X^{[\varepsilon]}$ hits $\{0\}$ up to time $t$. That integral is not trivially zero because the sticky condition for $X^{[\varepsilon]}$ says that $\{t\,:\, X^{[\varepsilon]}_t = 0\}$ is a set of positive Lebesgue measure. Observe that $J^\varepsilon_\Delta$ is a countable set of holding times at $\{0\}$ for $X^{[\varepsilon]}$ whereas, $J^\nu_\Delta$ is a countable set of holding times at the point $J \in (0, \infty)$ for $X^{[\nu]}$. Indeed, $X^{[\nu]}$ never hit the boundary point $\{0\}$ and immediately jumps at the random level $J$ according with the additive functional $A_t$ given in \eqref{jumpsL}, see also Figure \ref{fig:BmNLBVPjumpsANDplateaus}. Formulas \eqref{Jedge} and \eqref{Jnodes} must be meant in the sense that $\{\tau^E_i\}_i$ are i.i.d. random variables sharing the law of the holding times of $X^{[\varepsilon]}$ whereas, $\{\tau^W_i\}_i$ are i.i.d. random variables sharing the law of the holding times of $X^{[\nu]}$. For the integral
\begin{align*}
    \int_0^{\tau^{[\varepsilon]}_\ell}  \mathbf{1}_{\{0\}}(X^{[\varepsilon]}_t) \, dt = \gamma \circ \tau^{[\varepsilon]}_\ell = \int_{J^\varepsilon_\Delta \cap [0, \tau^{[\varepsilon]}_\ell]} d\gamma_t 
\end{align*}
we recall the result obtained in Theorem \ref{thm:meanStoppedSWcase} and the fact that the zero set of a sticky Brownian motion has positive Lebesgue measure.

\subsection{The energy accumulation process}
\label{sec:EII}

\begin{assumption}
For every $i \in \mathbb{N}$, the levels $h_i$ are independent exponential random variables such that $\mathbf{E}[h_i] = \eta_\varepsilon$.
For the region $r\in \{1,\ldots, N\}$ we have $\eta_\varepsilon = m_r/\sigma_r$ and write $h_r$ for the i.i.d random variables in that region.
\label{ASS3}
\end{assumption}

The previous assumption well agrees with Section \ref{sec:UpsilonExpJumps} where $\eta_\varepsilon = m_r/\sigma_r$, that is the stickiness coefficient for the region $\nu=\nu_r$ is given by the magnitude $m_r$ and the parameter $\sigma_r$. The random level $h_r$ plays the role of the random variable $J$ associated with the symbol $\Upsilon$ as described above. Moreover, we observe that $\mathbf{P}(h_r > x)$ decreases as $m_r/\sigma_r$ decreases. This is due to the realistic description of decreasing effects after the mainshock. 

\begin{theorem}
    Fix $i \in \mathbb{N}$ and $r \in \{1, \ldots, N\}$. Let $\tau^E_i$ be the hitting time of the excursion $i$ of $E$ associated with the region $r$. Under Assumption \ref{ASS3}, we have that $\tau^E_i \stackrel{d}{=} \mathcal{H} \circ h_r$. In particular, 
    \begin{align*}
\mathbf{P}_0(\tau^E_i \in dz) = \int_0^\infty \frac{h}{z} \frac{e^{-(h- m_r z)^2 /(4z)}}{\sqrt{4\pi z}} \frac{\sigma_r}{m_r} e^{-h(\sigma_r/m_r)} \,dh \,dz, \quad z>0.
\end{align*}
\label{thm:tauEwithH}
\end{theorem}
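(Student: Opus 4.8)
The plan is to reduce the statement to the hitting-time characterization already available for the negatively drifted Brownian motion and then to average over the random starting level. First I would recall from the construction in Section~\ref{sec:EnergyAccumulation} that the $i$-th excursion of $E$ in region $r$ is a copy of $X^{\mu_i}$ with $\mu_i=-m_r<0$ that starts at the random accumulated level $h_r$ and is run until it first reaches the ground level $\{0\}$; by the reading of $\tau^E$ as the release time, $\tau^E_i$ is precisely the first hitting time of $\{0\}$ for $X^{\mu_i}$ started at $h_r$. The level $h_r=J$ is produced by the additive part $A$ attached to the symbol $\Upsilon$ (Section~\ref{sec:UpsilonExpJumps}) and is independent of the subsequent drifted descent, which is what licenses conditioning on $\{h_r=h\}$.

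Conditioning on $h_r=h$, I would invoke Theorem~\ref{thm:hittingTime} in the case $\mu\leq 0$, giving $\tau^E_i\,|\,(h_r=h)\stackrel{d}{=}\mathcal{H}_h$, where $\mathcal{H}=\mathcal{H}^\Phi$ is the tempered subordinator whose symbol \eqref{symbPhi} is fixed by $\theta=(\mu_i/2)^2=(m_r/2)^2$. Since $h_r$ is independent of $\mathcal{H}$, this reads $\tau^E_i\stackrel{d}{=}\mathcal{H}\circ h_r$ in the notation for random-time evaluation fixed in the introduction.

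For the explicit law I would insert the inverse Gaussian density recorded in the Preliminaries, $\mathbf{P}_0(\mathcal{H}_h\in dz)=\frac{h}{z}\,\frac{e^{-(h-m_r z)^2/(4z)}}{\sqrt{4\pi z}}\,dz$, read off with the positive parameter $|\mu_i|=m_r$; the sign is harmless because the symbol depends on $\mu$ only through $\theta=\mu^2/4$, while the negative drift makes the descent to $\{0\}$ almost surely finite. Under Assumption~\ref{ASS3} the level $h_r$ is exponential with mean $\eta_\varepsilon=m_r/\sigma_r$, so $\mathbf{P}(h_r\in dh)=\frac{\sigma_r}{m_r}e^{-h\,\sigma_r/m_r}\,dh$. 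The law of total probability together with the independence noted above then gives $\mathbf{P}_0(\tau^E_i\in dz)=\int_0^\infty \mathbf{P}_0(\mathcal{H}_h\in dz)\,\mathbf{P}(h_r\in dh)$, which is exactly the displayed mixture.

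The integral representation itself is routine; the only point requiring genuine care is the probabilistic bookkeeping in the first step, namely confirming that the excursion's starting level and its ensuing descent are independent and that, distributionally, each excursion of region $r$ starts from a fresh exponential level $h_r$, so that the index $i$ becomes immaterial once the region is fixed. This is guaranteed by the i.i.d.\ structure of the jumps of $X^\bullet$ recorded in Corollary~\ref{coro:HTnodes} and the strong Markov property at the jump epochs; once it is in place, conditioning and integrating is immediate.
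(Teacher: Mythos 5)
Your proposal is correct, but it reaches the conclusion by a different route than the paper. The paper's proof stays on the wave-propagation side: it invokes Corollary \ref{coro:HTverteces}, identifying $\tau^E_i$ with a holding time of the sticky process $X^{[\varepsilon]}$ at the boundary point $\{0\}$, whose law is $\mathcal{H}^\Phi \circ T_{1/\eta_\varepsilon}$ by the analysis of the non-local dynamic boundary condition in \eqref{NLBVPedges}. You instead stay on the energy side: you read $\tau^E_i$ as the first passage to $\{0\}$ of the negatively drifted Brownian motion started at the random level $h_r$, apply Theorem \ref{thm:hittingTime} in the case $\mu\leq 0$ to get $\mathcal{H}_{h}$ conditionally on $h_r=h$, and then mix over the exponential law of $h_r$ supplied by Assumption \ref{ASS3}. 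The two arguments are consistent precisely because of the model's design rule that running times of $E$ coincide with holding times of $W$; the paper's version makes that correspondence (and hence the internal coherence of the coupled construction) the content of the proof, whereas yours is more self-contained for producing the explicit inverse-Gaussian mixture and in fact mirrors the identification $\tau^E_i \stackrel{d}{=} (\tau^\mu_0\,|\,X^\mu_0=h_{i-1})$ that the paper itself uses later in the proof of Theorem \ref{thm:equivE}. Your closing remarks on the independence of the starting level from the ensuing descent and on the irrelevance of the excursion index $i$ (via the i.i.d.\ structure of the jumps and the strong Markov property) address exactly the bookkeeping the paper leaves implicit, so nothing is missing.
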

\begin{proof}
This follows directly from Corollary \ref{coro:HTverteces}. Indeed, $\tau^E$ is the running time for the process $E$ corresponding with the holding time of the process $W$. Our construction consider the description of the process $W$ to be given in terms of $X^{[\varepsilon]}$ which is a sticky Brownian motion with holding time at $\{0\}$ given by $\mathcal{H} \circ h_r$ where $h_r$ is an exponential r.v. with parameter $1/\eta_\varepsilon$ where $\eta_\varepsilon = m_r/\sigma_r$ being associated with a region $r \in \{1, \ldots, N\}$.
\end{proof}

Recall that the tempered subordinator $\{\mathcal{H}_t\}_{t\geq 0}$ has Bernstein symbol
\begin{align}
\Phi_r(\lambda) = \sqrt{\lambda + \theta_r} - \sqrt{\theta_r} \quad \textrm{with} \quad \theta_r = (m_r/2)^2, \quad \lambda>0
\label{SymbPhiE}
\end{align}
from which we write
\begin{align*}
\mathbf{E} [\mathbf{E}_0[e^{-\lambda \mathcal{H}_{h_r}} | h_r]] = \mathbf{E}[e^{- h_r \Phi_r(\lambda)}] = \frac{1/\eta_\varepsilon}{1/\eta_\varepsilon + \Phi_r(\lambda)}.
\end{align*}

\begin{theorem}
Let Assumption \ref{ASS3} holds true. For the energy accumulation we have
\begin{align}
\mathbf{E}_{h_r} [f(E_t)] = \mathbf{E}_{h_r}[f(X^{[\nu]}_t)],\quad t\geq 0, \quad f \in C_b([0, \infty)).
\label{formula1-thm:equivE}
\end{align}
for a suitable choice of $\Upsilon$ and $\Psi$.\\ 
In particular, for the $i$-th seismic wave in the region $r \in \{1, \ldots, N\}$,
\begin{align}
\mathbf{E}_x[f(E_t), \tau_{i-1} \leq t < \tau_{i}] = \mathbf{E}_x[f(X^{\mu_i} \circ V^{-1}_t + A \circ \gamma \circ V^{-1}_t),\, 0 \leq t < \mathcal{H}^\Phi_x + \mathcal{H}^\Psi_{e^{\mu_i}_0}] 
\label{formula2-thm:equivE}
\end{align}
is the solution to the problem \eqref{NLBVPnodes} with $\mu= \mu_i$ and $|\mu_i| \in (m_1, \ldots, m_N)$, $\Upsilon$ given in \eqref{UpsilonExp} and $\Psi$ the symbol of the independent subordinator $\mathcal{H}^\Psi$.
\label{thm:equivE}
\end{theorem}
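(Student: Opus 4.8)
The plan is to read off the path description of $X^{[\nu]}$ furnished by Theorem \ref{thm:NLBVP} and match it, excursion by excursion, with the switching construction of $E$ given in Section \ref{sec:EnergyAccumulation}. Recall that $X^{[\nu]} = X^\bullet \circ V^{-1}$ with $X^\bullet_t = X^\mu_t + A \circ \gamma^0_t(X^\mu)$ and $V_t = t + \mathcal{H}^\Psi \circ \eta_\nu \gamma^0_t(X^\bullet)$: the process runs as a negatively drifted Brownian motion down to $\{0\}$, leaves the boundary only by a jump prescribed by the additive part $A$ (controlled by $\Upsilon$), and then stays constant for a holding time produced by the time change $V^{-1}$ (controlled by $\Psi$). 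The single excursion of $E$ on $[\tau_{i-1}, \tau_i)$ has exactly the same three-phase shape: a negatively drifted descent of $X^{\mu_i}$ from the current level down to $\{0\}$ on $[\tau_{i-1}, \tau_{i-1} + \tau^E_i)$, followed by the constant level $h_i$ on $[\tau_{i-1} + \tau^E_i, \tau_i)$, the latter being the jump-and-hold phase. Thus the whole statement reduces to matching these three mechanisms and identifying $\Upsilon, \Psi$.

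For the descent I would set $\mu = \mu_i = -m_r < 0$, so that by Theorem \ref{thm:hittingTime} the run-down time $\tau^E_i$ (the first hitting time of $\{0\}$ from the current level) is distributed as $\mathcal{H}^\Phi_x$ with $\Phi = \Phi_r$ given in \eqref{SymbPhiE}; this is precisely the content of Theorem \ref{thm:tauEwithH}. For the jump I would take $\Upsilon$ as in \eqref{UpsilonExp}, the finite-activity symbol of Section \ref{sec:UpsilonExpJumps}, which makes the landing level $J$ exponential of parameter $1/\eta_\varepsilon = \sigma_r/m_r$, hence equal in law to the level $h_r$ of Assumption \ref{ASS3}. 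For the holding I would let $\Psi$ be the symbol of the independent subordinator $\mathcal{H}^\Psi$; by Corollary \ref{coro:HTnodes} the post-jump holding times of $X^{[\nu]}$ are i.i.d. copies of $\mathcal{H}^\Psi \circ T_{1/\eta_\nu}$, and identifying $T_{1/\eta_\nu}$ with $e^{\mu_i}_0$ (Theorem \ref{thm:HTrdBM}) matches them with the propagation times $\tau^W_i$ that serve as holding times for $E$.

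With these choices \eqref{formula2-thm:equivE} follows at the level of a single excursion: the process $X^{\mu_i} \circ V^{-1}_t + A \circ \gamma \circ V^{-1}_t$ started at $x$ is exactly $X^{[\nu]}$ restricted to one cycle, and its lifetime splits as $\mathcal{H}^\Phi_x + \mathcal{H}^\Psi_{e^{\mu_i}_0}$, the descent time $\tau^E_i$ plus the holding time $\tau^W_i$; since this process solves \eqref{NLBVPnodes} by Theorem \ref{thm:NLBVP}, so does $E$ on $[\tau_{i-1}, \tau_i)$. The global identity \eqref{formula1-thm:equivE} then follows by concatenating excursions: the levels $\{h_i\}_i$ are i.i.d. exponential (Assumption \ref{ASS3}), the descent and holding times are conditionally independent given the levels, and the strong Markov property at the successive hitting times of $\{0\}$ makes the renewal structures of $E$ and of $X^{[\nu]}$ coincide in law.

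The main obstacle I anticipate is bookkeeping rather than analysis: one must check that the time change $V^{-1}$ inserts each holding interval at the post-jump level $J$ and not at $\{0\}$ itself — this is where the right-continuity of $A$ (hence of $X^\bullet$) and the fact that $X^\bullet$ leaves $\{0\}$ instantly by \eqref{jumpXnode} are essential, so that the order descent-then-jump-then-hold is faithfully reproduced. One must also verify that the region-dependent switching of the drift $\mu_i = -m_r$ is compatible with the single-symbol construction of Theorem \ref{thm:NLBVP}. The clean identification therefore holds region by region, with $\Phi = \Phi_r$ and $\eta_\varepsilon = m_r/\sigma_r$ fixed, and the full process is recovered only after concatenating the per-region pieces through the switching rule governing $\{\mu_i\}_i$.
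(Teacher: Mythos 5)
Your proposal is correct and follows essentially the same route as the paper: an excursion-by-excursion matching of the descent/jump/hold phases of $E$ with those of $X^{[\nu]}$, using Theorem \ref{thm:hittingTime} to identify the descent time with $\mathcal{H}^\Phi_x$, the choice \eqref{UpsilonExp} of $\Upsilon$ to match the exponential jump $J$ with $h_r$ under Assumption \ref{ASS3}, Corollary \ref{coro:HTnodes} to identify the post-jump holding times with $\mathcal{H}^\Psi \circ e^{\mu_i}_0$, and concatenation of excursions for the global identity. The bookkeeping caveat you flag (right-continuity of $A$ placing the holding interval after the jump) is precisely the point the paper handles via \eqref{jumpXnode}--\eqref{jumpOnlyXnode}.
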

Prof postponed, see Section \ref{proof-thm:equivE}.\\

We now introduce some connection with the star graphs hopping this make some advantage for the readers. The composition $\mathcal{H} \circ h_r$ will provide the running time for $E$ and the holding time for $W$ in the star graph $\mathsf{S}_{\nu_r}$ associated with the region $r$. Then, under Assumption \ref{ASS3}, for a region $r \in \{1,\ldots, N \}$ with associated level $h_r$, the time $\tau^E_i$ for the excursion $i \in \mathbb{N}$ of $E$ equals in law $\mathcal{H} \circ h_r$. In particular, $h_r$ is an exponential r.v. with parameter $1/\eta_\varepsilon$ independent from the excursion $i \in \mathbb{N}$ of $E$. Notice that we refer to the excursion of $E$ as the paths on $(0, \infty)$ of $E$. This is actually associated with an excursion if we recall that $X^{[\nu]}$ describing $E$ leaves the boundary point $\{0\}$ only by jumps. Our model is consistent with the following setting. 

For the region characterized by $\mathsf{S}=\mathsf{S}_\nu$:
\begin{align*}
    & E \textrm{ is the energy at the vertex $\nu$ of the star graph},\\
    & E \textrm{ can be described by $X^{[\nu]}$ driven by \eqref{NLBVPnodes} with $\mu=-m_r$ (negative magnitude)},\\
    & \eta_\varepsilon D^\Phi_t \textrm{ in \eqref{NLBVPedges} describes the running time of $E$ (time to release the initial level $h$ of energy)},  \\
    & \eta_\varepsilon = (m_r/\sigma_r) \textrm{ iff } \nu=\nu_r \textrm{ is a vertex associated with the region } r,\\
    & \eta_\nu D^\Psi_t \textrm{ in \eqref{NLBVPnodes} describes the holding time of $E$ after a jump (a level $h$ of energy)},\\
    & \eta_\nu = \eta_\nu(v_r) \textrm{ depends on $v_r$ iff $\nu=\nu_r$ is a vertex associated with the region r}.
\end{align*}

Summarizing, the process $X^{[\nu]}$ on $[0, \infty)$ will describe the energy accumulation $E$ on the vertex $\nu=\nu_r$ of a star graph in terms of the excursion $i$ ($i$ visits of $\{0\}$ or $i$ jumps away from $\{0\}$ will be associated with $i$ visits of the set of edges).  The time $\tau^{[\nu]}_{h_i} \stackrel{d}{=} \tau^E_i$ according with the right selection of $h_i$ for the region $r$. We underline that, under the previous setting and assumptions, for the region $r$,
\begin{align}
    \mathbf{E}_0[\mathcal{H} \circ h_i] = \frac{1}{\sigma_r}
    \label{meanAccumTime}
\end{align}
is the mean accumulation time in the region $r \in \{1,\ldots, N\}$ for every excursion $i \in \mathbb{N}$. In \eqref{Epdeh0} we wrote $h_0=x$ for the starting level of $E$, we underline that $h_i \sim h_r$ for $i \in \mathbb{N} \cup \{0\}$, that is we always have an exponential r.v. depending on the region $r$. The parameter $\sigma_r$ can be considered in order to have an extra characterization of the associated region $r$. For the process $E$ the parameter $\sigma_r$ provides information about the $r$-th epicenter.

\subsection{The seismic waves propagation process}
\label{sec:WIII}
It is well known that 
\begin{align}
    \mathbf{E}_x[f(X^\mu_t), t< \tau_\ell^\mu], \quad t>0,\; x \in [0, \ell), \qquad f \in C[0,\ell)
\end{align}
gives the Dirichlet-Neumann semigroup on $C[0,\ell)$ for a reflected drifted Brownian motion on $[0,\ell)$ killed at $\{\ell\}$. The characterization of $W$ is given in terms of $X^{[\varepsilon]}$ and the problem \eqref{NLBVPedges}. Thus, 
\begin{align}
    \mathbf{E}_x[f(X^\mu \circ V^{-1}_t), t < V \circ \tau^\mu_\ell], \quad t>0,\; x \in [0, \ell), \quad f \in C[0, \ell)
\end{align}
is the main object to focus on. Recall that, as $X^\mu$ is not in $\{0\}$, then $X^{\mu} \circ V^{-1}_t$ behaves like $X^\mu$. The first hitting time $V \circ \tau^\mu_\ell$ will be associated with the time the process spends in a given region (in a given star graph). The sequence $\{\tau^W_i\}_i$ are the occupation time of $(0, \ell)$ for the excursion $i$ and represent a very hard object to deal with. The standard Brownian motion can hit the boundary point $\{0\}$ infinitely many times. For positively drifted Brownian motion we can immediately check from Theorem \ref{thm:hittingTime} and formula \eqref{zeroHittingPosDrift} that the mean (first) hitting time of zero can be infinite. Despite of this, we are able to provide a simple representation for the process $W$. Indeed, the sequence $\{\tau^E_i \}_i$ can be regarded as a sequence of holding times for $W$. That is, $\forall\, i$,
\begin{align}
    \mathbf{P}_0(\tau^E_i > t \,| \, W \circ \tau^E_i >0) = \mathbf{P}(\textrm{$W$ started at zero is forced to stay at zero for a time} > t  ).
\end{align}
It is well known that the set of holding times for the sticky Brownian motion has positive Lebesgue measure. It is a countable set. In particular, there exists a sequence of holding times for the process $X^{[\varepsilon]}$ to be associated with $\tau^E$ as discussed above in Corollary \ref{coro:HTverteces}.

\begin{theorem}
Let Assumption \ref{ASS3} holds true. For the wave propagation we have
\begin{align}
\mathbf{E}_0[f(W_t), \, t < \zeta^E_{abs} \wedge \zeta^W_{kill}] = \mathbf{E}_0[f(X^{[\varepsilon]}_t), \, t < \zeta^{[\nu]}_{abs} \wedge \zeta^{[\varepsilon]}], \quad t\geq 0, \quad f \in C([0, \ell))
\label{formula1-thm:equivW}
\end{align}
where
\begin{align*}
\zeta^{[\nu]}_{abs} = \inf\{t\,:\, X^{[\nu]}_{t-} \neq X^{[\nu]}_t < h_*\}.
\end{align*}
In particular, for the $i$-th seismic wave in the region $r \in \{1, \ldots, N\}$, 
\begin{align}
\mathbf{E}_x[f(W_t),\, t < \zeta^W_{kil} \wedge \zeta^W_{reg}] = \mathbf{E}_x[f(X^{\mu_i} \circ V^{-1}_t),\, t < V \circ \zeta^{\mu_i} \wedge V \circ \tau^{\mu_i}_\ell]
\label{formula2-thm:equivW}
\end{align}
is the solution to the problem \eqref{NLBVPedges} with $\mu_i \in (v_1, \ldots, v_N)$ and $\Phi = \Phi_r$ given in \eqref{SymbPhiE}.
\label{thm:equivW}
\end{theorem}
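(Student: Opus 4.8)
The plan is to mirror the argument for Theorem \ref{thm:equivE}, exchanging the roles of edges and vertices and invoking Theorem \ref{thm:NLBVPwithDir} in place of Theorem \ref{thm:NLBVP}. First I would fix the right-hand side: the process $X^{[\varepsilon]} = X^{\mu} \circ V^{-1}_t$ with $\mu = v_r \geq 0$ is, by Theorem \ref{thm:NLBVPwithDir}, the unique solution of \eqref{NLBVPedges}, namely a drifted Brownian motion on $[0, \ell)$ stickily (slowly) reflected at $\{0\}$, killed at $\{\ell\}$ at the hitting time $\tau^\mu_\ell$, and elastically killed with lifetime $\zeta^{[\varepsilon]} = V \circ \zeta^\mu$. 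I would then record the excursion decomposition of $X^{[\varepsilon]}$: between successive sojourns at $\{0\}$ it runs as the free drifted motion $X^\mu$, while at $\{0\}$ it is held for positive time on a countable set of positive Lebesgue measure, the holding times forming, by Corollary \ref{coro:HTverteces}, an i.i.d.\ sequence distributed as $\mathcal{H}^{\Phi} \circ T_{1/\eta_\varepsilon}$ with $\Phi = \Phi_r$ from \eqref{SymbPhiE} (so the magnitude $m_r$ enters through the time change, while the drift $v_r$ governs propagation).

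Second, I would match this excursion structure with the piecewise definition of $W$ in Section \ref{sec:SeismicWavePropagation}. By Theorem \ref{thm:tauEwithH} the accumulation times satisfy $\tau^E_i \stackrel{d}{=} \mathcal{H} \circ h_r$, so they coincide in law with the holding times of $X^{[\varepsilon]}$ at $\{0\}$. Consequently the phases on which $W \equiv 0$, i.e.\ the intervals $[\tau_{i-1}, (\tau_{i-1}+\tau^E_i) \wedge \zeta^E_{abs})$, correspond exactly to the sticky sojourns of $X^{[\varepsilon]}$ at $\{0\}$, while the propagation phases on which $W_t = X^{\mu_i}_t$ up to $\tau_i \wedge \zeta^W_{reg} \wedge \zeta^W_{kil}$ correspond to the excursions of $X^{[\varepsilon]}$ away from $\{0\}$ running until $\tau^\mu_\ell$. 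This is precisely the general rule that holding times for the edge process equal running (accumulation) times for the vertex process.

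Third, I would synchronize the killing and absorption data. The killing time $\zeta^W_{kil}$ of $W$ corresponds to the elastic lifetime $\zeta^{[\varepsilon]}$ of $X^{[\varepsilon]}$ through the multiplicative functional $M^\mu_t = \exp(-c\,\gamma^0_t(X^\mu))$ of \eqref{solmuSemig}, whereas the absorption time $\zeta^E_{abs}$ is governed by the energy side: since Theorem \ref{thm:equivE} gives $E \stackrel{d}{=} X^{[\nu]}$, the event $\{E_{t-} \neq E_t < h_*\}$ matches $\{X^{[\nu]}_{t-} \neq X^{[\nu]}_t < h_*\}$, so that $\zeta^E_{abs} \stackrel{d}{=} \zeta^{[\nu]}_{abs}$. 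Assembling these matchings via the strong Markov property applied at the sequence of hitting and jump times $\{\tau_i\}$ yields \eqref{formula1-thm:equivW}. The \emph{in particular} identity \eqref{formula2-thm:equivW} then follows by restricting to a single excursion in region $r$, where Theorem \ref{thm:NLBVPwithDir} directly provides the representation of the solution of \eqref{NLBVPedges} with drift $\mu_i = v_r$ and symbol $\Phi = \Phi_r$ from \eqref{SymbPhiE}.

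The main obstacle will be the second step: upgrading the excursion-level equivalence in law to a path-level (semigroup-level) identity across the infinitely many alternating phases. One must check that the i.i.d.\ holding times of $X^{[\varepsilon]}$ and the accumulation times $\tau^E_i$ agree not only marginally but jointly as the process alternates between being stuck at $\{0\}$ and running, and that the absorption clock $\zeta^E_{abs}$ and the killing clock $\zeta^W_{kil}$ are correctly interleaved with the inter-region switching times $\zeta^W_{reg}$. Making this precise on the whole half-line $[0,\infty)$ relies on the countability and positive Lebesgue measure of the sticky zero set together with the right-continuity of the time change $V^{-1}$, which together keep the bookkeeping of \emph{stuck versus running} consistent throughout.
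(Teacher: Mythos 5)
Your proposal is correct and follows essentially the same route as the paper's proof: both rest on identifying the accumulation times $\tau^E_i \stackrel{d}{=} \mathcal{H}^{\Phi}\circ h_r$ (Assumption \ref{ASS3}, Theorem \ref{thm:tauEwithH}) with the i.i.d.\ holding times of $X^{[\varepsilon]}$ at $\{0\}$ from Corollary \ref{coro:HTverteces}, matching the running phases of $W$ with the excursions of $X^{\mu}$ via the time-change representation of Theorem \ref{thm:NLBVPwithDir}, and then interleaving the elastic kill $\zeta^{W}_{kil}\leftrightarrow\zeta^{[\varepsilon]}$ and the absorption $\zeta^{E}_{abs}\leftrightarrow\zeta^{[\nu]}_{abs}$. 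The only difference is organizational (you assemble \eqref{formula1-thm:equivW} first and obtain \eqref{formula2-thm:equivW} by restriction, while the paper proves \eqref{formula2-thm:equivW} first), and the difficulty you flag about upgrading the marginal identification to the full alternating-phase identity is real but is handled in the paper at the same level of detail you propose.
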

Proof postponed, see Section \ref{proof-thm:equivW}\\

We completely neglect the excursions of $X^{[\varepsilon]}$ and we only focus on the holding times. The symbol $\Psi$ in \eqref{NLBVPnodes} together with the parameter $\eta_\nu$ are devoted to the characterization of the holding times for $X^{[\nu]}$ and therefore, of the excursion times for $X^{[\varepsilon]}$. Since there are infinite many subordinators for the symbol $\Psi$, we reasonably assume that $\exists (\eta_\nu, \Psi)$ to be considered in our construction. However, we proceed with a simplified version of the model which excludes the characterization of the couple $(\eta_\nu, \Psi)$ maintaining an accurate description of the earthquake evolution. Our model is consistent with the following setting. 

For the region characterized by $\mathsf{S}=\mathsf{S}_\nu$:
\begin{align*}
    & W \textrm{ is the seismic wave on the edge $\varepsilon$ of the star graph},\\
    & W \textrm{ can be described by $X^{[\varepsilon]}$ driven by \eqref{NLBVPedges} with $\mu=v_r$ (propagation velocity)},\\
    & M \circ V^{-1}_t \textrm{ gives the stopping time according with the time at $\{0\}$ (no propagation)},\\
    & V\circ \tau^{\mu_i}_\ell \textrm{ gives the stopping time according with the time at $\{\ell\}$ (propagation distance) },\\
    & \eta_\nu D^\Psi_t \textrm{ in \eqref{NLBVPnodes} describes the running times of $W$ up to the exit from $(0, \ell)$},\\
    & \eta_\nu = \eta_\nu(v_r) \textrm{ as above iff $\mathsf{S}=\mathsf{S}_{\nu_r}$ is the star graph associated with the region $r$},\\
    & \eta_\varepsilon D^\Phi_t \textrm{ in \eqref{NLBVPedges} describes the holding times of $W$ at $\{0\}$},\\
    & \eta_\varepsilon = (m_r/\sigma_r) \textrm{ iff $\mathsf{S}=\mathsf{S}_{\nu_r}$ is the star graph associated with the region $r$}.
\end{align*}
We observe that $\eta_\nu D^\Psi_t$ in \eqref{NLBVPnodes} gives the excursion times of $X^{[\varepsilon]}$, that is the return times at $\{0\}$ of a Brownian motion on $[0, \infty)$ up to the first hitting time with the level $\ell>0$. The identification of this subordinator is not significant for our purposes, we only underline that there exists a subordinator characterizing this random times.

Summarizing, the process $X^{[\varepsilon]}$ on $[0, \ell)$ will describe the seismic wave propagation on the region associated with the star graph $\mathsf{S}_{\nu_r}$ in terms of the excursion $i$ ($i$ visits of $\{0\}$ will be associated with $i$ visits of the set of nodes). The time $\tau^{[\varepsilon]}_\ell$ will give the time the wave propagates in the same region, that is on the edges of $\mathsf{S}_{\nu_r}$ until the first hitting time with an external vertex. We underline that, under the previous setting and assumptions,
\begin{align}
    \mathbf{E}_0[\tau^{[\varepsilon]}_\ell] =  \frac{\ell}{v_r}  - \frac{1 - e^{- \ell v_r}}{(v_r)^2} \left( 1 - \frac{m_r}{\sigma_r} \right)
\end{align}
is the mean propagation time on the region $r \in \{1,\ldots, N\}$ which takes the form
\begin{align}
    \frac{\ell}{v_r}  - \frac{1 - e^{- \ell v_r}}{(v_r)^2} \left( 1 - \frac{\frac{m_r}{\sigma_r}}{1 + \frac{m_r v_r}{\sigma_r}} \right)
\end{align}
according with $\delta=0$ in Theorem \ref{thm:meanStoppedSWcase}. For the process $W$ the parameter $\sigma_r$ provides information about the $r$-th region far from the epicenter.

\subsection{The processes $E$ and $W$ under scale transformation}

We observe that the energy accumulation process can be also obtained via some transformation of $X^{[\nu]}$, for example, for $h,k>0$ (see Figure \ref{fig:g-transf}),
\begin{align}
    E = g(X^{[\nu]}) = h\, \exp(- k X^{[\nu]}).
    \label{g-transf}
\end{align}
An example is given in Figure \ref{fig:g-transf}. Analogously, we may consider a transformation
\begin{align}
W = g(X^{[\varepsilon]})    
\end{align}
for some $g$ suitable with our data. The analysis of the associated functionals (hitting times, etc.) can be therefore obtained by considering different scales for the processes $X^{[\nu]}$ and $X^{[\varepsilon]}$. Notice that, under suitable transformations, we can also consider drifted Brownian motion on $\mathbb{R}$ in place of $X^\mu$ on $[0, \infty)$.

\section{Earthquake modeling via Brownian motions on metric graphs}
\label{sec:MetricG}
We now introduce the metric graph to be associated with a given geographical area, then we present our model based on the motions described in Section \ref{sec:EII} and Section \ref{sec:WIII}.

\subsection{Brownian motions on graphs}
\label{sec:BMonG}
We begin with the following
\begin{assumption}
The geographical area is characterized by regions. Regions can be represented by star graphs with the same distance $\ell>0$ between nodes and the star vertex.
\label{ASS5}
\end{assumption}
Recall that we are considering the level $\ell>0$ to be associated with the length of a given edge. With no efforts we are able to include different lengths, eventually random.

Under the Assumption \ref{ASS5} we will consider the motion $X^{[\varepsilon]}$ on $[0, \ell)$. This process is equivalent to a Brownian motion on the edge $\varepsilon$ according with the following construction.\\

Let $\mathcal{E} := \{ \varepsilon=[0,\ell)  \}$ with cardinality $|\mathcal{E}| = N$ be the collection of rays given by the bounded intervals of the real line. Thus, $\mathsf{S} \equiv S_N$ introduced above in Section \ref{sec:occurrence}. We define the star graph $\mathsf{S}$ as the quotient space $\mathsf{S} = \mathcal{E}/\sim$, i.e., we identify the starting points on all edges and in $\mathcal{E}$ the origin $0 \equiv (\cdot, 0)$ is the unique point that belongs to all the rays. Such a point is identified as the vertex $\nu$ of the the star graph $\mathsf{S}$ written also as $\mathsf{S}_\nu$. We identify a point $\mathsf{x} \in \mathsf{S}$ as $\mathsf{x}=(\varepsilon, x)$ for the edge $\varepsilon$ and the distance $x$ from the star vertex. On every edge, we have an Euclidean structure given by the Euclidean distance, and a measure structure induced by the Lebesgue measure. These structures are inherited by the space $\mathcal{E}$. Thus,  we have a metric space with the distance
\begin{align*}
d(\mathsf{x}, \mathsf{y}) = d((\varepsilon_j,x), (\varepsilon_k,y)) = |x - y| \mathbf{1}_{\varepsilon_j= \varepsilon_k} + (x + y) \mathbf{1}_{\varepsilon_j \not= \varepsilon_k}, \quad \varepsilon_j, \varepsilon_k \in \mathcal{E}
\end{align*}
and a measure space with respect to the direct sum measure induced by the Lebesgue 
measure on every edge. We focus on the motion on the star graph $\mathsf{S}_\nu$ with vertex $\nu$. Our model can be adapted to any structure (and therefore to any geographical area) by considering a collection of star graphs $\{\mathsf{S}_\nu, \, \nu \in \mathcal{V}\}$ and the motion on the graph
\begin{align*}
\mathsf{G} = \bigcup_{\nu \in \mathcal{V}} \mathsf{S}_\nu.
\end{align*} 
Thus, we provide a rigorous mathematical formulation only for the motion on $\mathsf{S}$, that is $\mathsf{S}_\nu$ associated to a given region with a given epicenter associated with the vertex $\nu$. 

An earthquake on a given geographical area is characterized by the couple $(E,W)$. If we focus on a given region and assume that $\mathsf{S}_\nu$ represents that region, then the earthquake is described by  $(X^{[\nu]}, \Theta^{[\nu]}, X^{[\Theta]})$ where $\Theta^{[\nu]} =\{\Theta^{[\nu]}_t\}_{t\geq 0}$ is given by $\Theta^{[\nu]}_t = U^{[\nu]} \circ V^{-1}_t$ and 
\begin{align}
    U^{[\nu]}_t = 
    \left\lbrace
    \begin{array}{ll}
          U, & \textrm{if } V_{t-} \neq V_{t} \\
          U^{[\nu]}_{t-}, & \textrm{otherwise} 
    \end{array} 
    \right . , \quad t\geq 0
\end{align}
with
\begin{align}
    \mathbf{P}(U = \varepsilon) = \rho_\varepsilon, \quad \varepsilon \in \mathcal{E}, \quad \textrm{and} \quad \rho_1 + \ldots +\rho_{|\mathcal{E}|} = 1.
\end{align} 
The step process $\Theta^{[\nu]}_t$ changes its value as $X^{[\varepsilon]}$ hits $\{0\}$. We observe that $U^{[\nu]}_t = U_t$ does not depend on $\nu$ in the present formulation. Then, we streamline the notation and write also $\Theta$ in place of $\Theta^{[\nu]}$.

According to \cite{BonDovfcaa}, the process $\mathsf{Q}$ on $\mathsf{S}$ can be defined under the equivalence (up to absorption) of the radial part with a diffusion on the half line. The process $\Theta$ plays the role of an edge selector. Once the process $\mathsf{Q}$ arrives at the star vertex, the angular part $\Theta$ gives the next edge to be visited. Then, the process $\mathsf{Q}$ moves on the selected edge according to the radial part $X^{[\varepsilon]}$ only after the process $X^{[\nu]}$ has reached the desired level of energy.  The motion on $\mathsf{G}$ inherits such behaviour, and the process stops for a random time with each visit of a vertex connecting star graphs, then it starts afresh on a selected incident edge according with the setting of the new star graph ($m_r$, $\sigma_r$, $v_r$ of the associated new region). The probability $\rho_\varepsilon$ provides further characterization of the propagation of the earthquake. However, in order to simplify our model, we proceed with the following.

\begin{assumption}
The seismic wave propagation in a given direction is completely characterized by the propagation velocity for that region.
\label{ASS6}
\end{assumption}

For simplicity, also due to the fact that $U^{[\nu]}_t = U_t$,  we set 
$$\rho_\varepsilon = 1/|\mathcal{E}|$$ 
and $U$ is uniformly distributed over $\mathcal{E}$. Recall that $\mathcal{E}=\mathcal{E}_\nu$ is the set of edges incident the star vertex $\nu$ of the star graph $\mathsf{S}_\nu$. Moreover, for simplicity, we set $|\mathcal{E}_\nu| =N$ for every $\nu \in \mathcal{V}$.

\begin{notation}
Let $Q$ be the earthquake on a given geographical area and $(E,W)$ be the associated energy and wave. Let $\mathsf{G}$ be the network characterizing that area. The process $\mathsf{Q}=\{\mathsf{Q}_t\}_{t\geq 0}$ on $\mathsf{G}$ describes the earthquake $Q$. 
\end{notation}

We observe that $\mathsf{Q}$ on $\mathsf{G}$ can be studied under equivalence with
$$(X^{[\nu]}, \Theta, X^{[\Theta]}), \, \nu \in \mathcal{V}.$$
However, we stress the fact that $X^{[\nu]}$ provides the stopping time (see $\zeta^W_{abs}$) for the process $X^{[\Theta]}$ and the seismic wave on $\mathsf{G}$ can be therefore described in terms of
\begin{align*}
(\Theta_t, X^{[\Theta_t]}_t), \, t < \zeta_{abs}(\mathsf{G})
\end{align*} 
where 
\begin{align}
\zeta_{abs}(\mathsf{G}) = \inf\{t\,:\, X^{[\nu]}_{t-} \neq X^{[\nu]}_t < h_*\}=: \zeta^{[\nu]}_{abs}
\label{lifeGabs}
\end{align}
equals in law $\zeta^E_{abs}$. Further on we write $X_t^{[\Theta]}$ in place of $X_t^{[\Theta_t]}$.\\

We now focus on the NLBVP on $\mathsf{S}$ for the process $\mathsf{Q}$. Let us recall that
\begin{align}
 u(t,\mathsf{x}) = u_\varepsilon(t,x), \quad t>0,\, x \in [0,\ell)\; \textrm{for}\;  (\varepsilon, x) = \mathsf{x} \in \mathsf{S}  
\end{align}
can be written in terms of the projection $u_\varepsilon$ of $u$ along the edge $\varepsilon \in \mathcal{E}$. Thus, for $t>0$, $\mathsf{x} \in \mathsf{S}$ with $\mathsf{x}=(\varepsilon,x)$ and $\varepsilon \in \mathcal{E}$, $x \in [0,\ell)$, we are able to define
\begin{align*}
u^\prime(t, \mathsf{x}) := u^\prime_\varepsilon (t, x) = \frac{d}{dx} u_\varepsilon(t, x), 
\end{align*}
and
\begin{align*}
    u^{\prime \prime}(t, \mathsf{x}) := u^{\prime \prime}_\varepsilon(t,x) = \frac{d^2}{d x^2} u_\varepsilon(t,\mathsf{x}).
\end{align*}
Accordingly, for the Brownian motion on $\mathsf{S}$ with drift $\mu$, we introduce the operator $\mathsf{G}_\mu u= \mu u^\prime + u^{\prime \prime}$ on the space of continuous functions that are twice continuously differentiable on each open ray $\varepsilon$ and such that, for the vertex $\mathsf{v}$ of $\mathsf{S}$,
\begin{align}
    \mathsf{G}_\mu u(t,\mathsf{v}) := \mathsf{G}_\mu u(t,0) = \lim_{x\to 0} \mu \frac{d}{dx} u_\varepsilon(t, x) + \frac{d^2}{d x^2} u_\varepsilon(t, x), \quad t>0.
\end{align}

Focus now on the definition given in Section \ref{app:NLOs} for functions on $(0, \infty) \times \mathsf{S}$. The operator
\begin{align*}
D^\Phi_t u(t, \mathsf{x}) = \int_0^t \frac{\partial u}{\partial s}(s, \mathsf{x}) \phi(t-s)ds, \quad t>0,\; \mathsf{x} \in \mathsf{S}
\end{align*}
is well-defined if, $\forall\, \mathsf{x} \in \mathsf{S}$, 
\begin{align*}
t \mapsto u(t, x) \textrm{ belongs to the set } W^{1, \infty}(0, \infty) .
\end{align*}
By following the arguments as in Section \ref{app:NLOs}, we may also ask for the following condition
\begin{align}
\label{condME}
\exists\, M_{\mathsf{S}}>0\,:\, \bigg| \frac{\partial u}{\partial s}(s,\mathsf{x}) \bigg| \leq  M_{\mathsf{S}}\, \frac{\kappa(ds)}{ds}.
\end{align}

We are ready to study the main problem of the work. Let us write the problem \eqref{NLBVPGraphINTRO} as 
\begin{equation}
\left\lbrace
\begin{array}{ll}
\displaystyle \dot{u}(t, \mathsf{x}) = \mathsf{G}_\mu u(t, \mathsf{x}), & t>0,\, \mathsf{x} \in \mathsf{S} \setminus \{\mathsf{v}\}\\
\\
\displaystyle m_r D^\Phi_t u(t, \mathsf{v}) = \sigma_r \sum_{\varepsilon \in \mathcal{E}} \frac{1}{|\mathcal{E}|}\, u^\prime_\varepsilon(t, 0) - c\, u(t, \mathsf{v}), & t>0, \quad c \geq 0,\\  
\displaystyle u_\varepsilon(t, \ell)=0, & t>0, \quad \ell >0, \quad \varepsilon \in \mathcal{E},\\ 
\\
\displaystyle u(0, \mathsf{x}) = f(\mathsf{x}), & \mathsf{x} \in \mathsf{S}, \quad f \in C(\mathsf{S})
\end{array}
\right.
\label{NLBVPGraph}
\end{equation}
where we used the following notation: $\mathsf{S}=\mathsf{S}_{\nu_r}$ is the star graph with vertex $\nu_r=\mathsf{v} \in \mathcal{V}$ characterizes the region $r$; $\mu = v_r$ is the velocity of propagation for the region $r$;  $\Phi=\Phi_r$ is a symbol depending on the vertex $\mathsf{v}$ and given in \eqref{SymbPhiE}; $\eta= m_r/\sigma_r$ characterizes the holding time at $\mathsf{v}$ of $\mathsf{Q}$; $\rho_\varepsilon = 1/|\mathcal{E}|$ assigns the same rate of propagation to all the edges $\mathcal{E}$ of $\mathsf{S}$, that is the possible directions in the region $r$. 

\begin{remark}
We consider $c\geq 0$ for the sake of completeness. However, $c=0$ can be considered as the case of interest in the present paper.
\end{remark}

Let us recall that $\varphi : \mathsf{S} \to \mathbb{R}$ can be written as
\begin{align}
    \varphi(\mathsf{x}) = \varphi(\varepsilon, x) = \varphi_\varepsilon(x), \quad \varepsilon \in \mathcal{E},\, x \in [0,\ell),\, \mathsf{S} \ni \mathsf{x}=(\varepsilon, x)
\end{align}
where $\varphi_\varepsilon$ is the projection of $\varphi$ on the edge $\varepsilon$. Thus, for the star vertex $\mathsf{v}$, we write $\varphi(\mathsf{v}) = \varphi(\cdot, 0)$ which means that $\varphi(\cdot, 0) = \varphi(\varepsilon, 0)$, $\forall\, \varepsilon \in \mathcal{E}$. We introduce the spaces
\begin{align*}
    \mathsf{D}_\phi = \{ \varphi : (0, \infty) \times \mathsf{S} \to \mathbb{R}\, \textrm{with } \varrho = \varphi|_{\mathsf{x=0}}\, \textrm{ s.t. } \,  \dot{\varrho}(s) \phi(t-s) \in L^1(0,t), \, 0<s<t  \}
\end{align*}
as the analogue of $D_\phi$ and 
\begin{align}
    \mathsf{K} = \{ \varphi : (0, \infty) \times \mathsf{S} \to \mathbb{R}\, \textrm{ s.t. } \, \varphi|_{x=\ell} = 0 \}
\end{align}
for the killing condition at the external vertices of $\mathsf{S}$. We now write $\widetilde{\varphi} = \int e^{-\lambda t} \varphi \,dt$ for $\varphi= \varphi(t, \mathsf{x})$ and introduce the condition 
\begin{align}
m_r \frac{\Phi_r(\lambda)}{\lambda} \big( \lambda \widetilde{\varphi}(\lambda, \cdot, 0) - \varphi(0, \cdot, 0) \big) = \sigma_r \sum_{\varepsilon \in \mathcal{E}} \frac{1}{|\mathcal{E}|} \widetilde{\varphi}^{\prime} (\lambda, \varepsilon, 0) - c \, \widetilde{\varphi}(\lambda, \cdot, 0) 
\label{condSpaceU}
\end{align}
with the collection of spaces
\begin{align*}
 \mathsf{U}^r_\phi = \left\{ \varphi \in \mathsf{D}_\phi \cap \mathsf{K} \, \textrm{such that \eqref{condSpaceU} holds true} \right\}
\end{align*}
with $r \in \{1, \ldots, N\}$. Observe that $\{\mathsf{U}^r_\phi \}_r \subseteq Dom(\mathsf{G}_\mu) \subset \{\varphi \in C(\mathsf{S})\,:\, \varphi^\prime \in C(\mathsf{S} \setminus \{\mathsf{v}\})\}=:\mathcal{C}^1(\mathsf{S})$. We write $\varphi(t, \cdot, 0)$ meaning $\varphi(t, \mathsf{v})$, namely $\varphi$ reaches continuously the vertex $\{\mathsf{v}\}$. Observe that $G_\mu u$ is continuous up to the boundary point $\{0\}$. Since $\mathsf{G}_\mu$ acts on $\mathcal{C}^1(\mathsf{S})$ we focus on the spaces $\mathsf{U}^r_\phi$ 
 introduced above.\\

\begin{theorem}
    ({\it Equivalence}) Let $\mathsf{T}_\mathsf{v} := \inf\{ t\,:\, \mathsf{Q}_t = \mathsf{v}\}$ be the first hitting time of the vertex $\mathsf{v}$, that is, the point $(\varepsilon, 0) \equiv 0 \in \mathsf{S}$ and write $\mathsf{T}_0 = \mathsf{T}_\mathsf{v}$. The process $\mathsf{Q} \circ (t \wedge \mathsf{T}_0)$ started at $\mathsf{x} = (\varepsilon, x)$ is equivalent in law to the process $X^{[\varepsilon]} \circ (t \wedge \tau^{[\varepsilon]}_0)$ started at $x$ for any $x \neq 0$ and $\varepsilon \in \mathcal{E}$.
    \label{thm:equiv}
\end{theorem}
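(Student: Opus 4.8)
The plan is to exploit the fact that, before reaching the star vertex, the process $\mathsf{Q}$ cannot leave the edge on which it starts, so the problem collapses to a genuinely one-dimensional one. First I would make the topological observation precise: since $\mathsf{v}$ is the unique point shared by all the rays of $\mathsf{S}$, every continuous path joining $\{\varepsilon\} \times (0, \ell)$ to any other edge must pass through $\mathsf{v}$. Consequently, on the random interval $[0, \mathsf{T}_0)$ the process $\mathsf{Q}$ started at $\mathsf{x} = (\varepsilon, x)$ with $x \neq 0$ remains inside $\{\varepsilon\} \times [0, \ell)$, and its radial coordinate is well defined. Equivalently, in the representation $\mathsf{Q} \sim (X^{[\nu]}, \Theta, X^{[\Theta]})$ the edge selector $\Theta$ is constant and equal to $\varepsilon$ until the first visit of the vertex, so that $X^{[\Theta]} = X^{[\varepsilon]}$ throughout $[0, \mathsf{T}_0)$, while the vertex (energy) component $X^{[\nu]}$ stays dormant.

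Next I would match the infinitesimal dynamics. On $\mathsf{S} \setminus \{\mathsf{v}\}$ the operator $\mathsf{G}_\mu$ acts on each ray as $\mu u' + u''$, which is exactly $G_\mu$ on $(0, \ell)$; the Dirichlet condition $u_\varepsilon(t, \ell) = 0$ in \eqref{NLBVPGraph} is the killing at the external endpoint and coincides with the condition $u(t, \ell) = 0$ in \eqref{NLBVPedges}, while the non-local dynamic condition at $\mathsf{v}$ plays no role once the path is stopped at $\mathsf{T}_0$. Hence the radial part of $\mathsf{Q} \circ (t \wedge \mathsf{T}_0)$ solves the same absorbing one-dimensional problem on $[0, \ell)$ — drifted Brownian motion with killing at $\ell$ and stopping at $0$ — as the reflected drifted Brownian motion $X^\mu$ stopped at $\tau^\mu_0$. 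By uniqueness of the associated semigroup this identifies the two laws.

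It then remains to recognise $X^\mu$ stopped at $0$ as $X^{[\varepsilon]}$ stopped at $0$. Recall from Theorem \ref{thm:NLBVPwithDir} that $X^{[\varepsilon]}_t = X^\mu \circ V^{-1}_t$ with $V_t = t + \mathcal{H} \circ \eta_\varepsilon \gamma^0_t(X^\mu)$. Because the boundary local time $\gamma^0_t(X^\mu)$ vanishes identically on $[0, \tau^\mu_0)$, one has $V_t = t$ and $V^{-1}_t = t$ there, whence $\tau^{[\varepsilon]}_0 = \tau^\mu_0$ and $X^{[\varepsilon]}_t = X^\mu_t$ for $t < \tau^{[\varepsilon]}_0$; the elastic multiplicative functional $M^\mu_t = \exp(-c\, \gamma^0_t(X^\mu))$ is likewise equal to $1$ before the local time grows, so no elastic kill occurs before the vertex is reached. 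Combining the three steps gives $\mathsf{Q} \circ (t \wedge \mathsf{T}_0) \stackrel{d}{=} X^\mu \circ (t \wedge \tau^\mu_0) \stackrel{d}{=} X^{[\varepsilon]} \circ (t \wedge \tau^{[\varepsilon]}_0)$, as claimed.

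The main obstacle I expect is making the equivalence in law between an $\mathsf{S}$-valued and a $[0, \ell)$-valued process rigorous: one must fix the identification $(\varepsilon, x) \leftrightarrow x$ on the starting edge and verify that the stopped path genuinely avoids the vertex, so that this identification is a measurable bijection onto its image and transports the law of one process to the other. Beyond this, the only care required is to confirm that the dynamic and non-local boundary data at the vertex, together with the sticky time-change $V^{-1}$ and the elastic functional, are truly inactive on $[0, \mathsf{T}_0)$; as above, each of these reduces to the vanishing of the boundary local time before the first hit of $0$.
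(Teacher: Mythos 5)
Your proposal is correct and follows essentially the same route as the paper, which simply notes that both processes behave like a (drifted) Brownian motion away from the vertex and defers the details to Theorem 4 of the cited reference; your three steps — confinement to the starting edge before $\mathsf{T}_0$, identification of the generator and Dirichlet data on that edge, and the observation that $V_t = t$ and $M^\mu_t = 1$ before the boundary local time at $\{0\}$ accumulates — are exactly the content of that argument, written out explicitly.
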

\begin{proof}
    The proof follows from the same arguments as in Theorem 4 in \cite{BonDovfcaa}. Notice that both processes behave like a Brownian motion away from $0$.
\end{proof}

We introduce the elastic lifetime $\zeta(\mathsf{S}_\nu)=\zeta_{el}(\mathsf{S}_\nu)$ and the exit time $\tau(\mathsf{S}_\nu)$ of $\mathsf{Q}$ on $\mathsf{S} = \mathsf{S}_\nu$. In particular, 
\begin{align}
    \mathbf{E}_x[f(\mathsf{Q}_t)] = \mathbf{E}_x[f(\mathsf{Q}_t), t < \tau(\mathsf{S}_\nu) \wedge \zeta(\mathsf{S}_\nu)] 
\end{align}
on the star graph $\mathsf{S}=\mathsf{S}_\nu$. The lifetime $\zeta(\mathsf{S}_\nu)$ depends on the elastic condition (with coefficient $c\geq 0$) and $\tau(\mathsf{S}_\nu)$ depends on the killing condition (at the external nodes). From Theorem \ref{thm:equiv}, the motion of $\mathsf{Q}$ on $\mathsf{S}_\nu$ ie equivalent to the motion $X^{[\varepsilon]}$ on $[0, \ell)$, that is
\begin{align}
    \mathbf{E}_{(\varepsilon, x)}[f(\mathsf{Q}_t), t < \tau(\mathsf{S}_\nu) \wedge \zeta(\mathsf{S}_\nu) ] = \mathbf{E}_x [f(X^{[\varepsilon]}_t), t < \tau^{[\varepsilon]} \wedge \zeta^{[\varepsilon]}]
\end{align}
where $\zeta^{[\varepsilon]}$ is the elastic lifetime of $X^{[\varepsilon]}$ on $[0, \ell)$.

\begin{theorem}
For the process $\mathsf{Q}$ on $\mathsf{S}$ started at $\mathsf{x}=(\varepsilon, x)$ and the process $X^{[\varepsilon]}$ on $[0, \ell)$ started at $x$, consider the local times:
\begin{itemize}
\item $\gamma_t(\mathsf{Q})$ which increases only as $Q$ hits the star vertex $\mathsf{v} \equiv (\cdot, 0)$;
\item $\gamma_t(X^{[\varepsilon]})$ which increases only as $X^{[\varepsilon]}$ hits the boundary point $\{0\}$.
\end{itemize}
We have $\gamma_t(\mathsf{Q}) = \gamma_t(X^{[\varepsilon]})$, $t\geq 0$.
\end{theorem}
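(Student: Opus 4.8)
The plan is to reduce the identity to a \emph{pathwise} statement about the radial distance of $\mathsf{Q}$ from the vertex. Write $\mathsf{Q}_t = (\Theta_t, R_t)$ with $R_t := d(\mathsf{Q}_t, \mathsf{v})$ the radial part and $\Theta_t$ the edge selector. By the construction of $\mathsf{Q}$ on the quotient space $\mathsf{S} = \mathcal{E}/\!\sim$ (Section \ref{sec:BMonG}), the vertex $\mathsf{v}$ is exactly the common point $0 \equiv (\cdot, 0)$, so $\mathsf{Q}_t = \mathsf{v}$ if and only if $R_t = 0$. The radial motion is a sticky reflected drifted Brownian motion on $[0, \ell)$ realized as $X^\mu \circ V^{-1}$, with the angular coordinate $\Theta$ updated by $U$ at each departure from the vertex; crucially, the edge re-selection changes only $\Theta$ and leaves $R$ untouched. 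Thus $R$ and $X^{[\varepsilon]}$ are driven by the same half-line dynamics and the same boundary condition \eqref{NLBVPedges}, and on a common probability space we may take $R_t = X^{[\varepsilon]}_t$ for every $t \geq 0$. This is precisely the equivalence of Theorem \ref{thm:equiv} between $\mathsf{T}_0$ and $\tau^{[\varepsilon]}_0$, iterated over the successive excursions of $X^{[\varepsilon]}$ away from $0$ so that it upgrades the equivalence in law into a pathwise coupling holding for all time, and not merely up to the first visit of the vertex.

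Granting the pathwise identity $R_t = X^{[\varepsilon]}_t$, the two occupation sets coincide pathwise,
\begin{align*}
\{ t \geq 0 \,:\, \mathsf{Q}_t = \mathsf{v} \} = \{ t \geq 0 \,:\, R_t = 0 \} = \{ t \geq 0 \,:\, X^{[\varepsilon]}_t = 0 \}.
\end{align*}
Since both $\gamma_t(\mathsf{Q})$ and $\gamma_t(X^{[\varepsilon]})$ are the continuous non-decreasing additive functionals that increase only on these sets with the normalization inherited from the (identical) boundary data, the cleanest route is to exhibit them through a common functional of the radial path. For the sticky processes at hand the occupation functional $\int_0^t \mathbf{1}_{\{0\}}(X^{[\varepsilon]}_s)\,ds$ is well defined because the zero set has positive Lebesgue measure, and likewise $\int_0^t \mathbf{1}_{\{\mathsf{v}\}}(\mathsf{Q}_s)\,ds$. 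Using $\mathbf{1}_{\{\mathsf{v}\}}(\mathsf{Q}_s) = \mathbf{1}_{\{0\}}(R_s) = \mathbf{1}_{\{0\}}(X^{[\varepsilon]}_s)$ for every $s$ yields the pathwise equality $\gamma_t(\mathsf{Q}) = \gamma_t(X^{[\varepsilon]})$. If instead one reads $\gamma_t$ as the Brownian additive-functional local time that drives the reflection, the same conclusion follows because that local time is a measurable functional of the radial path alone, constructed identically from $R$ and from $X^{[\varepsilon]}$.

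The main obstacle is the first paragraph: justifying $R_t = X^{[\varepsilon]}_t$ for \emph{all} $t$ rather than the equivalence in law of Theorem \ref{thm:equiv} up to $\mathsf{T}_0$. This requires two ingredients. First, one must verify that the radial part of $\mathsf{Q}$ decouples from the angular part, i.e. that the edge selector $\Theta$ enters only through $U$ at the instants $V$ jumps and never feeds back into $R$; this is exactly the content of $U^{[\nu]}_t = U_t$ and of the radial--angular splitting used to define $\mathsf{Q}$. Second, one must iterate the equivalence across the countable family of excursions of $X^{[\varepsilon]}$ away from $0$, invoking the strong Markov property at each return to the vertex, so that the coupling extends from $[0, \mathsf{T}_0]$ to $[0, \infty)$. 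Once the radial coupling is in force, matching the normalization of the two local times is automatic, since both are determined by the same half-line boundary operator $\eta_\varepsilon D^\Phi_t$ in \eqref{NLBVPedges}.
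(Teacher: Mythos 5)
Your proposal is correct and follows essentially the same route as the paper: the paper's proof likewise regards the path of $\mathsf{Q}$ on each edge as an excursion of $X^{[\varepsilon]}$ in $(0,\ell)$, observes that ``there is no need to collect the selected edges,'' and thereby identifies the radial motion across successive edge visits with the single reflected process on $[0,\ell)$, from which the equality of local times is immediate. Your version merely makes explicit what the paper leaves implicit --- the pathwise coupling $R_t = X^{[\varepsilon]}_t$ built by construction from the radial--angular splitting, and the matching of normalizations via the common boundary operator.
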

\begin{proof}
By construction, the path of $\mathsf{Q}$ on a given edge $\varepsilon$ can be regarded as an excursion of $X^{[\varepsilon]}$ in $(0, \ell)$. The process on the edge hits the vertex and starts again, there is no need to collect the selected edges. Thus, the motion on different edges up to the (elastic) killing time can be associated with a Brownian motion on $[0, \ell)$ with reflection at $\{0\}$. 
\end{proof}

\begin{theorem}
The solution $u \in C((0, \infty) \times \mathsf{S}) \cap \mathsf{U}^r_\phi$ to the problem \eqref{NLBVPGraph} with $c=0$ has the probabilistic representation
\begin{align}
u(t,\mathsf{x}) 
& = \mathbf{E}_\mathsf{x}[f(\mathsf{Q}_t)], \quad t\geq 0,\; \mathsf{x} \in \mathsf{S},\, f \in C(\mathsf{S}) \notag \\
& = \mathbf{E}_{(\varepsilon, x)}[f(\Theta_t, X^{[\Theta]}_t), \, t < \tau^{[\Theta]}_\ell \wedge \zeta^{[\Theta]} ], \quad x \in [0,\ell),\, t\geq 0, \, f_\varepsilon \in C[0,\ell), \varepsilon \in \mathcal{E} \label{repSolGrph}
\end{align}
for a given $r \in \{1, \ldots, N\}$. In particular, \eqref{repSolGrph} holds on $\mathsf{S}= \mathsf{S}_{\nu_r}$ $\forall\, r \in \{1, \ldots, N\}$.
\label{thm:NLBVPfinalG}
\end{theorem}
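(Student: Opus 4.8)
The plan is to reduce the star-graph problem \eqref{NLBVPGraph} to the half-line problem \eqref{NLBVPedges} one ray at a time, to invoke Theorem \ref{thm:NLBVPwithDir} for the radial time-change representation, and then to reassemble the pieces through the edge selector $\Theta$. First I would take the Laplace transform in time. Writing $\widetilde{u} = \int_0^\infty e^{-\lambda t} u\, dt$, the interior equation $\dot{u} = \mathsf{G}_\mu u$ becomes the edgewise resolvent equation $\mathsf{G}_\mu \widetilde{u} - \lambda \widetilde{u} = -f$ on each ray, the Dirichlet condition gives $\widetilde{u}_\varepsilon(\lambda, \ell) = 0$, and --- via \eqref{LapDPhi} --- the non-local dynamic boundary condition collapses to exactly the algebraic relation \eqref{condSpaceU}. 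Thus $u \in \mathsf{U}^r_\phi$ is precisely the statement that $\widetilde{u}$ solves the resolvent problem on the star graph, which is where existence will be read off from the probabilistic representation.

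The second step is the edgewise identification of the radial motion. By Theorem \ref{thm:equiv}, $\mathsf{Q}$ stopped at the first vertex hit $\mathsf{T}_0$ and started at $(\varepsilon, x)$ coincides in law with $X^{[\varepsilon]}$ stopped at $\tau^{[\varepsilon]}_0$ and started at $x$, so on every open ray the radial part of $\mathsf{Q}$ is governed by the half-line process of \eqref{NLBVPedges} with the common parameters $\mu = v_r$, $\Phi = \Phi_r$ and stickiness $\eta_\varepsilon = m_r/\sigma_r$. Theorem \ref{thm:NLBVPwithDir} then supplies, for this radial motion, the representation $X^{[\varepsilon]}_t = X^\mu \circ V^{-1}_t$ with $V_t = t + \mathcal{H}^\Phi \circ \eta_\varepsilon \gamma^0_t(X^\mu)$ and stopping at $V\circ\tau^\mu_\ell \wedge V\circ\zeta^\mu$. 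Since the radial process is a single half-line process common to all edges and $\Theta_t = U\circ V^{-1}_t$ merely records which edge is active during the current excursion, reading $f(\Theta_t, X^{[\Theta]}_t) = f_{\Theta_t}(X^{[\Theta]}_t)$ assembles these pieces into \eqref{repSolGrph}; its equality with $\mathbf{E}_\mathsf{x}[f(\mathsf{Q}_t)]$ is then the definition of $\mathsf{Q}$ through the triple $(X^{[\nu]}, \Theta, X^{[\Theta]})$ together with the absorption/killing times $\tau^{[\Theta]}_\ell \wedge \zeta^{[\Theta]}$.

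The crux --- and the step I expect to be the main obstacle --- is the vertex condition, where the Kirchhoff-type average $\sigma_r \sum_\varepsilon \tfrac{1}{|\mathcal{E}|} u'_\varepsilon(t,0)$ must be produced by the sticky holding of $\mathsf{Q}$ at $\mathsf{v}$. Here I would argue as in the Walsh/spider construction underlying Theorem \ref{thm:equiv}: the uniform selector $U$ distributes each departure from the vertex across the edges with weight $\rho_\varepsilon = 1/|\mathcal{E}|$, so the local time $\gamma_t(\mathsf{Q})$ at $\mathsf{v}$ couples to the edge fluxes through exactly this weighted sum, while continuity $u(t,\mathsf{v}) = u_\varepsilon(t,0)$ fixes the common boundary value. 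The sticky time change $V_t = t + \mathcal{H}^\Phi \circ \eta_\varepsilon \gamma_t$ simultaneously converts the local-time clock into the holding times $\mathcal{H}^\Phi \circ T_{1/\eta_\varepsilon}$ of Corollary \ref{coro:HTverteces}, whose Laplace exponent contributes the factor $\Phi_r(\lambda)/\lambda$ of \eqref{condSpaceU}; multiplying by $\eta_\varepsilon = m_r/\sigma_r$ and clearing denominators recovers precisely $m_r D^\Phi_t u(t,\mathsf{v}) = \sigma_r \sum_\varepsilon \tfrac{1}{|\mathcal{E}|} u'_\varepsilon(t,0)$ with $c = 0$. The delicate points are to show that the time-changed process remains Markov off $\mathsf{v}$ and that the countable, Lebesgue-positive holding set at the vertex injects no spurious contribution into the interior equation.

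Finally, uniqueness follows from the transformed problem. On each ray the resolvent equation $\mathsf{G}_\mu \widetilde{u} - \lambda \widetilde{u} = -f$ has a two-parameter solution space; the Dirichlet condition at $\ell$ removes one parameter per edge, and continuity at $\mathsf{v}$ together with the single flux relation \eqref{condSpaceU} pins down the remaining constants. Hence the solution in $C((0,\infty)\times\mathsf{S}) \cap \mathsf{U}^r_\phi$ is unique for each $r \in \{1,\dots,N\}$, and by running the same argument on $\mathsf{S} = \mathsf{S}_{\nu_r}$ region by region the representation \eqref{repSolGrph} holds on every star graph.
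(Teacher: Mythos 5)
Your overall strategy --- Laplace transform in time, edgewise reduction to the half-line problem \eqref{NLBVPedges}, and the identifications supplied by Theorem \ref{thm:equiv} and Theorem \ref{thm:NLBVPwithDir} --- is the same as the paper's. The gap is exactly where you flag it: the vertex condition. You assert that the uniform selector ``couples the local time to the edge fluxes through the weighted sum'' and that the sticky clock ``contributes the factor $\Phi_r(\lambda)/\lambda$'', but you never supply a mechanism turning these heuristics into the algebraic relation \eqref{condSpaceU}. The paper's proof does this by a strong-Markov first-passage decomposition of the resolvent at the first vertex visit: splitting $R_\lambda f(\varepsilon,x)=\mathbf{E}_{(\varepsilon,x)}\big[\int_0^{\tau^{[\Theta]}_\ell}e^{-\lambda t}f(\Theta_t,X^{[\Theta]}_t)\,dt\big]$ over the events $\{\tau^{[\varepsilon]}_\ell<\tau^{[\varepsilon]}_0\}$ and $\{\tau^{[\varepsilon]}_0<\tau^{[\varepsilon]}_\ell\}$ and using the independence of the edge selector yields the closed renewal identity
\begin{align*}
R_\lambda f(\varepsilon,x)=R^\dagger_\lambda f_\varepsilon(x)+\mathcal{K}^{[\varepsilon]}_\lambda(x)\sum_{\varepsilon'\in\mathcal{E}}\rho_{\varepsilon'}\,R_\lambda f_{\varepsilon'}(0),
\end{align*}
with $R^\dagger_\lambda$ the Dirichlet resolvent on $(0,\ell)$ and $\mathcal{K}^{[\varepsilon]}_\lambda(x)=\mathbf{E}_{(\varepsilon,x)}[e^{-\lambda\tau^{[\varepsilon]}_0},\,\tau^{[\varepsilon]}_0<\tau^{[\varepsilon]}_\ell]$ the $\lambda$-harmonic function with $\mathcal{K}_\lambda(0)=1$, $\mathcal{K}_\lambda(\ell)=0$. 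It is this explicit formula that simultaneously verifies the interior resolvent equation and the Dirichlet condition at $\ell$, and reduces the vertex condition to the single-edge boundary relation of Theorem \ref{thm:NLBVPwithDir} applied to each $R_\lambda f_{\varepsilon'}(0)$, from which the flux sum $\sum_{\varepsilon'}\rho_{\varepsilon'}u'_{\varepsilon'}(t,0)$ and the factor $\eta\,\Phi(\lambda)/\lambda$ emerge by linearity and $\sum_{\varepsilon'}\rho_{\varepsilon'}=1$. Without some such identity your argument records the ingredients that should appear in the Kirchhoff-type non-local condition but never establishes that the candidate $u$ satisfies it.

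A secondary remark: your uniqueness argument (two-parameter ODE solution space per ray, pinned down by the Dirichlet condition at $\ell$, continuity at $\mathsf{v}$ and the single flux relation \eqref{condSpaceU}) is a reasonable alternative to the paper's appeal to Theorem \ref{thm:NLBVPwithDir} and Theorem \ref{thm:equiv}, and would go through once existence of the representation is in place; but as written it presupposes the step you have not completed.
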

Proof postponed, see Section \ref{proof-thm:NLBVPfinalG}. \\   

For $c>0$ we introduce the elastic lifetime $\zeta(\mathsf{S}_\nu)$. Thus, $\zeta(\mathsf{S}_\nu)$ is equivalent to a killing time for $W$ on $[0, \ell)$, that is $\zeta^W_{kil}$, which differs from $\zeta^E_{abs}$ and $\zeta^W_{reg}$ defined in \eqref{lifeABSandREG}. We have that
\begin{align}
    \mathbf{E}_{(\varepsilon, x)} [\tau(\mathsf{S}_\nu) \wedge \zeta(\mathsf{S}_\nu) ] =  \mathbf{E}_x[\tau^{[\varepsilon]}_\ell \wedge \zeta^{[\varepsilon]}], \quad \varepsilon \in \mathcal{E},\; x \in [0, \ell)
\end{align}
gives the mean value of $\zeta^W_{reg} \wedge \zeta^W_{kil}$. Indeed, $\zeta^W_{reg} \stackrel{d}{=} \tau^{[\varepsilon]}_\ell$ gives the exit time $\tau(\mathsf{S}_\nu)$ of $\mathsf{Q}$ from the star graph $\mathsf{S}_\nu$. For the (elastic) lifetime $\zeta(\mathsf{S}_\nu)$ we recall that, for $\mathsf{Q}_0=\mathsf{x} \in \mathsf{S}_\nu$, for every $t\geq 0$,
\begin{align*}
\mathbf{P}_\mathsf{x}(\zeta(\mathsf{S}_\nu) > t) \to 1 \quad \textrm{as} \quad c \to 0.
\end{align*} 
With \eqref{meanMinimum} and \eqref{meanVcomposto1}-\eqref{meanVcomposto2} at hand, from Theorem \ref{thm:tauLEVELgen} and Theorem \ref{thm:meanStoppedSWcase} we known that
\begin{align}
    \mathbf{E}_0[\tau^{[\varepsilon]}_\ell \wedge \zeta^{[\varepsilon]}] = \frac{\ell}{\mu} + \frac{1+c\ell}{\mu} \frac{e^{-\ell \mu} - 1}{(c + \mu) e^{\ell \mu} - c} + \eta \frac{1}{\mu} \mathbf{E}_0[\gamma \circ (\tau^{\mu}_\ell \wedge \zeta^\mu)] 
\end{align}
For $c=0$, $\zeta^{[\varepsilon]}=\infty$ a.s. and 
\begin{align}
    \mathbf{E}_{(\varepsilon, x)} [\tau(\mathsf{S}_\nu)] = \mathbf{E}_x[\tau^{[\varepsilon]}_\ell], \quad \varepsilon \in \mathcal{E},\; x \in [0, \ell).
\end{align}
According with \eqref{lifeABSandREG} and the definition of $(E,W)$, 
\begin{align}
E \circ \zeta^E_{abs} < h_*\, \Rightarrow \,  W_t = 0, \; t \geq \zeta^E_{abs},
\label{ABScoorsp1}
\end{align}
that is, there is no seismic wave propagation if the accumulation (jump) of energy to be releases is less than the threshold $h_*$. According with \eqref{taunuSecX}, \eqref{jumpXnode}, \eqref{jumpOnlyXnode} we have
\begin{align*}
X^{[\nu]} \circ \tau^{[\nu]}_J = J < h_*
\end{align*}
which implies $X^{[\varepsilon]}_t = 0$ for $t > \tau^{\nu}_J$. That is, as the earthquake reaches a new region, if the accumulation of energy is not enough, then there is no seismic wave propagation in the new region. Thus, recalling \eqref{lifeGabs}, 
\begin{align}
\mathsf{Q}_t = \mathsf{v}_{last}, \quad  t \geq \zeta_{abs}(\mathsf{G})
\label{ABScoorsp2}
\end{align}
where $\mathsf{v}_{last}$ is the \emph{last visited (external) vertex}. This corresponds to \eqref{ABScoorsp1}.

\begin{remark}
We underline that \eqref{ABScoorsp1} and \eqref{ABScoorsp2} correspond to $\{\sigma_r >0, \, 1 \leq r \leq r_*\}$ and $\sigma_{r} = 0$ for $r >r_*$ where $r_*$ is the number of events (visited regions, visited star graphs). Indeed, assume $c=0$, as $\sigma_r=0$ in \eqref{NLBVPGraph} we get absorption instead of partial reflection. This is the case of pure sticky condition. 
\end{remark}

\begin{remark}
We observe that Theorem \ref{thm:NLBVPfinalG} deal with $\mathsf{Q}$ on $\mathsf{S}$. By using the previous arguments we can write, for $t\geq 0,\; \mathsf{x} \in \mathsf{G},\, f \in C(\mathsf{G})$
\begin{align*}
\mathbf{E}_\mathsf{x}[f(\mathsf{Q}_t)] = \mathbf{E}_{(\varepsilon, x)}[f(\Theta_t, X^{[\Theta]}_t), \, t < \zeta_{el}(\mathsf{G}) \wedge \zeta_{abs}(\mathsf{G}) ], 
\end{align*}
with $t\geq 0, \, x \in [0,\ell),\, \varepsilon \in \mathcal{E}$ such that $\mathsf{x}=(\varepsilon, x)$. We recall that $\zeta_{el}(\mathsf{G})$ depends on the elastic coefficient $c\geq 0$ and $\zeta_{abs}(\mathsf{G})$ is that defined in \eqref{lifeGabs}.
\end{remark}

\subsection{Statistical characterization of earthquakes}
\label{sec:Stat}
We consider drifted Brownian motions subjected to non-local boundary value problems in order to describe the behaviour of $E$ and $W$. For an earthquake given by $(E,W)$ on a given area we consider the characterization of the earthquake in terms of the process $\mathsf{Q}$ on the network $\mathsf{G}$ where $\mathsf{G}$ characterizes that geographical area. Thus, we conclude our discussion with a characterization of the energy accumulation $E$ and the wave propagation $W$ in terms of useful statistics obtained from the drifted Brownian motions $X^{[\nu]}$ and $X^{[\varepsilon]}$ together with the angular process $\Theta = \Theta^{[\nu]}$. 

Our model can be identified by the following set of parameters:
\begin{itemize}
\item $m_r$ is the magnitude to be associated with the region $r$,
\item $v_r$ is the velocity of propagation to be associated with the region $r$
\item $\varrho_\varepsilon$ is the rate of propagation along the direction $\varepsilon$,
\item $\sigma_r$ is an auxiliary (delay) parameter (for example due to the distance between the seismic station and the region $r$ or the fact that sensors are not close enough),
\item $c$ is an auxiliary (cessation) parameter (for example due to increased friction, different type of material or more properly, due to hypocenter, how deep down in the earth a quake arises, causes like those arising from induced seismicity). Also $c=c_r$ depending on the region $r$ can be considered. However, we mainly focus, in the present paper, to $c=0$ for the sake of simplicity.
\end{itemize}

Moreover, we have that:
\begin{itemize}
\item $\mathbf{P}(J > h_*)= e^{-(\sigma_r/m_r)h_*}$ is the probability that a new event occurs in the region $r$ given $h_*$ as the minimum level of needed energy,
\item $\zeta^E_{abs}$ depends on $\{(\sigma_s/m_s),\, 0\leq s \leq r\}$. It represents the lifetime of the earthquake until the lack of energy or the fault ending. It is related with the Gutenberg-Richter law (see below),
\item $\zeta^W_{reg}$ has mean value \eqref{meanExitTime} with $\mu=v_r$, $\eta_\varepsilon = \sigma_r/ m_r$. It is the mean time between events or the time the seismic wave spend to reach a new region,
\item $\mathbf{P}(\zeta^W_{kil} > t) = \mathbf{E}[\exp(-c\, \gamma_t(X^{[\varepsilon]}))]$ is the probability the earthquake stops after time $t$ for hidden causes keeping out the case in which the fault simply ends. If $c=0$, then $\mathbf{P}(\zeta^W_{kil} > t) =1$, that is $\zeta^W_{kill} = \infty$ almost surely.
\end{itemize}

For the process $\mathsf{Q}$ on $\mathsf{G}$ we list the following quantities:
\begin{itemize}
\item $\tau(\mathsf{S}_\nu)$ is equivalent to $\zeta^W_{reg}$ and gives the occupation time of $\mathsf{S}_\nu$, that is the time between two subsequent events,
\item $\zeta(\mathsf{G}) = \zeta_{el}(\mathsf{G})$ is equivalent to $\zeta^W_{kill}$, that is the elastic kill for $c\neq 0$, 
\item $\zeta_{abs}(\mathsf{G})$ is equivalent to $\zeta^E_{abs}$ and gives the number of (events) involved regions or star graphs.
\end{itemize}

The fluctuations of $W$ in Figure \ref{fig:relations} represent the motion $\mathsf{Q}$ on $\mathsf{S}$ via equivalence with $(\Theta, X^{[\Theta]})$. An excursion of $X^{[\Theta]}$ on $(0, \ell)$ describes $W$ and represents the motion $\mathsf{Q}$ on a randomly chosen edge. The selection of the edge is done by $\Theta$ as $\mathsf{Q}$ hits a star vertex. 

The fluctuations of $E$ in Figure \ref{fig:relations} represent the accumulations of energy or equivalently, the releases of the accumulated energy. Such fluctuations are equivalent with $X^{[\nu]}$ and provides the time with no events, the time the stored energy overcomes the fault's resistance. In that time, $\mathsf{Q}$ stops on a vertex and equivalently the propagation $W$ equals $0$.

In Figure \ref{fig:relations} we have the $i$-th and $(i+1)$-th seismic waves on a region $r$. Notice that $X^{[\varepsilon]}$ (and $W$) can have in general more than two excursions on $(0, \ell)$. The seismic waves in Figure \ref{fig:relations} are given by $E$ and $W$ with
\begin{align*}
E_0= h_r \quad \textrm{and} \quad W_0=0
\end{align*}
where $h_r$ is an exponential r.v. (see Assumption \ref{ASS3}), that is the jump $J$ for the region $r$. 
\begin{center}
\begin{tabular}{|c|c|}
\hline
I & $\tau^E_i \stackrel{d}{=} (\tau^{[\nu]}_0 | X^{[\nu]}_0=h_r) \stackrel{d}{=} \mathcal{H}^\Phi \circ h_r$ \\
II & $\tau^W_i \stackrel{d}{=} \mathcal{H}^{\Psi} \circ e^{-m_r}_0$ \\
I + II & $\tau_i$ \\
III & $\tau^E_{i+1} \stackrel{d}{=} (\tau^{[\nu]}_0 | X^{[\nu]}_0=h_r) \stackrel{d}{=} \mathcal{H}^\Phi \circ h_r$ \\
IV & $\neq \tau^W_{i+1}$\\
III + IV & $\neq \tau_{i+1}$ \\
I+II+III+IV & $\zeta^W_{reg} \stackrel{d}{=} \tau^{[\varepsilon]}_\ell$ \\
\hline
\end{tabular}
\end{center}
\vspace{.5cm}
\begin{center}
\begin{tabular}{|c|c|c|}
\hline
I & $W=0$, $E \stackrel{d}{=} X^{[\nu]} = X^{\mu_i}$ & $\mu_i = -m_r$\\
II & $E=h_J \sim h_r$, $W \stackrel{d}{=} X^{[\varepsilon]} = X^{\mu_i}$ & $\mu_i=v_r$ \\
I + II & $i$-th seismic wave & \\
III & $W=0$, $E=X^{[\nu]} = X^{\mu_{i+1}}$ & $\mu_{i+1} = - m_r$\\
IV & $E=h_J\sim h_r$, $W \stackrel{d}{=} X^{[\varepsilon]} = X^{\mu_{i+1}}$ & $\mu_{i+1} = v_r$\\
III + IV & $(i+1)$-th seismic wave & \\
I+II+III+IV & $E$, $W$ & \\
\hline
\end{tabular}
\end{center}

%
%
%
\vspace{.5cm}

We stress the fact that our model provides a rigorous basis for simulations. In particular we do not simulate the quantities in the tables above, that quantities are obtained by simulation of suitably combined Brownian motions. Thus, we are able to simulate the hidden mechanism leading to the quantities in the tables above. Such quantities can be confirmed from empirical studies. Inverse Gaussian distributions for example, are commonly used to model inter-earthquake intervals. Fractional calculus and special functions have also recently been considered as useful tools, see for example \cite{CGSS23} and the references therein.

\subsection{Gutenberg-Richter law}
Gutenberg-Richter distribution describes how many earthquakes of a given magnitude will occur in a given region during a given time period. Here we provide a generalized version for the relation between number of earthquakes and magnitude. First we write
\begin{align*}
\mathbf{P}(\sharp\{events\ with\ magnitude\ greter\ than\ h_*\} \geq n)=\mathbf{P}(\sharp\{events\} \geq n)
\end{align*}
in order to streamline the notation. For the network we observe that
\begin{align*}
\sharp\{events\} = \sharp\{visited\ star\ graphs \}.
\end{align*}
Recall that
\begin{align*}
h_* < \min_r \{h_r \} \leq \max_{r} \{h_r\} = h_1 =: h(m_1)
\end{align*}
and $h$ may in general include further arguments, not only the magnitude. According with $\zeta^E_{abs}$ defined in \eqref{lifeABSandREG}, we observe that
\begin{align}
\mathbf{P}(h_* < \min_{1\leq r \leq n} \{h_r \}) = \exp \left( -h_* \sum_{r=1}^n \frac{\sigma_r}{m_r} \right) = \mathbf{P}(\sharp\{events\} \geq n).
\label{hMIN}
\end{align}
If the propagation velocity $v_r$ is large enough, then $W$ starts from zero never to return. Then $i=r$, that is for a given region, $W$ ($X^{[\varepsilon]}$) may have only one excursion on $(0, \ell)$ before to be killed (at $\ell$). In terms of $X^{[\varepsilon]}$ we have that $\tau^\mu_\ell = \tau_1$. In addition, 
\begin{align*}
\eta_\varepsilon = 1/m_r \; \textrm{ that is } \; \sigma_r=m_r^2\, \Rightarrow \,  \mathbf{P}(\sharp\{events\} \geq n) = e^{- h_* n \bar{m}}, 
\end{align*}
\begin{align*}
h_* = \ln 10\, \Rightarrow \, \mathbf{P}(\sharp\{events\} \geq n) = 10^{-n \bar{m}}
\end{align*}
where 
\begin{align*}
\bar{m} = \frac{1}{n} \sum_{r=1}^n m_r
\end{align*}
denotes the (sample) mean magnitude. With no restriction on $v_r$, $W$ is described by $X^{[\varepsilon]}$ which can return to $\{0\}$ infinitely many times. However, \eqref{hMIN} still gives the number of events (or visited graphs/regions). Given a geographical area with $r$ regions characterized by the known values of magnitude $\{m_r,\, r=1,2, \ldots, N\}$,  
\begin{align*}
\mathbf{P}(at\ least\ n\ events\ of\ magnitude\ greater\ than\ \ln a^\beta) = \left( a^{-\beta\, \bar{m}} \right)^n. 
\end{align*} 



\begin{appendix}

\section{Proofs}

\subsection{Proof of Theorem \ref{thm:localTime}}
\label{proof-thm:localTime}
Consider
\begin{align*}
\mathbf{E}_0[\mathbf{1}(X^\mu) M^\mu_t] = \int_0^\infty p(t,0,y) dy
\end{align*}
where, after some calculation, 
\begin{align*}
p(t,0,y) = e^{-\frac{\mu^2}{4}t} \int_0^\infty e^{-(c + \frac{\mu}{2}) w} e^{\frac{\mu}{2}(y-w)} \frac{w+y}{t} g(t, w+y) dw. 
\end{align*}
Thus, we write
\begin{align*}
\mathbf{E}_0[e^{-c\, \gamma^0_t(X^\mu))}] 
= & \int_0^\infty e^{-(c + \frac{\mu}{2})w} e^{-\frac{\mu^2}{4}t}  \int_0^\infty e^{\frac{\mu}{2}(y-w)} \frac{w+y}{t} g(t, w+y)\, dy\, dw\\
= & \int_0^\infty e^{-(c + \frac{\mu}{2}) w} \mathbf{P}_0(\gamma^0_t(X^\mu) \in dw).
\end{align*}
Since 
\begin{align*}
\int_0^\infty e^{-\lambda t} \frac{w+y}{t} g(t,w+y) dt = e^{-(w+y) \sqrt{\lambda}}
\end{align*}
we get
\begin{align*}
\int_0^\infty e^{-\lambda t} \mathbf{P}_0(\gamma^0_t(X^\mu) \in dw) dt 
= & \int_0^\infty e^{\frac{\mu}{2}(y-w)} e^{-(w+y) \sqrt{\lambda + \mu^2/4}} dy\, dw\\
= & \frac{1}{\sqrt{\lambda + \mu^2/4} - \mu/2} e^{-w (\sqrt{\lambda + \mu^2/4} + \mu/2)}\, dw\\
= & \frac{\sqrt{\lambda + \mu^2/4} + \mu/2}{\lambda} e^{-w (\sqrt{\lambda + \mu^2/4} + \mu/2)}\, dw.
\end{align*}
Thus,
\begin{itemize}
\item $\mu<0$ implies that
\begin{align*}
\int_0^\infty e^{-\lambda t} \mathbf{P}_0(\gamma^0_t(X^\mu) \in dw) dt = \frac{\Phi(\lambda)}{\lambda} e^{- w \Phi(\lambda)} dw
\end{align*} 
where
\begin{align*}
\Phi(\lambda) = \sqrt{\lambda + \mu^2/4} - |\mu|/2
\end{align*}
is the symbol of a tempered subordinator of order $1/2$ and tempering parameter $|\mu|/2$;
\item $\mu=0$ comes from the previous case in which $\Phi(\lambda) = \sqrt{\lambda}$ is the symbol of subordinator of order $1/2$. It is well known that the local time of a reflecting Brownian motion equals in law the inverse to a stable subordinator of order $1/2$;
\item $\mu>0$ implies that
\begin{align*}
\frac{\sqrt{\lambda + \mu^2/4} + \mu/2}{\lambda} e^{-w (\sqrt{\lambda + \mu^2/4} + \mu/2)} 
= & - \frac{d}{dw} \frac{1}{\lambda} e^{-w (\sqrt{\lambda + \mu^2/4} + \mu/2)}\\
= & - \frac{d}{dw} \frac{1}{\lambda} e^{-w \Phi(\lambda)} e^{-\mu w}\\
= & - \frac{d}{dw} \frac{1}{\lambda} \mathbf{E}_0[e^{-\lambda \mathcal{H}_w}] \mathbf{E}[\mathbf{1}_{(w < T_\mu)}].
\end{align*}
Write
\begin{align*}
\mathbf{E}_0[e^{-\lambda \mathcal{H}_w}] = 1 - \lambda \int_0^\infty e^{-\lambda t} \mathbf{P}_0(\mathcal{H}_w >t) dt
\end{align*}
and recall that $\mathbf{P}_0(\mathcal{H}_w >t) = \mathbf{P}_0(w > \mathcal{L}_t)$ by definition of inverse process. Then, 
\begin{align*}
\mathbf{E}_0[e^{-\lambda \mathcal{H}_w}] 
= & 1 - \lambda \int_0^\infty e^{-\lambda t} \mathbf{P}_0(w >\mathcal{L}_t) dt\\
= & \lambda \int_0^\infty e^{-\lambda t} \mathbf{P}_0(w \leq \mathcal{L}_t) dt
\end{align*}
and
\begin{align*}
\frac{1}{\lambda} \mathbf{E}_0[e^{-\lambda \mathcal{H}_w}] \mathbf{E}[\mathbf{1}_{(w < T_\mu)}] 
= & \int_0^\infty e^{-\lambda t} \bigg( \mathbf{P}_0(w \leq \mathcal{L}_t) \, \mathbf{P}(w \leq T_\mu) \bigg) dt\\
= & \int_0^\infty e^{-\lambda t} \bigg( \mathbf{P}_0(w \leq \mathcal{L}_t \wedge T_\mu) \bigg) dt.
\end{align*}
This concludes the proof.
\end{itemize}

\subsection{Proof of Theorem \ref{thm:hittingTime}}
\label{proof-thm:hittingTime}
We now consider $(G^\dagger_\mu, D(G^\dagger_\mu))$ where $G^\dagger_\mu \varphi = \varphi^{\prime \prime} + \mu \varphi^{\prime}$ with
\begin{align*}
	D(G^\dagger_\mu) = \left\lbrace \varphi, G^\dagger_\mu \varphi \in C_b((0, \infty))\,:\, \varphi(0^+)=0 \right\rbrace.
\end{align*}
The Dirichlet boundary condition can be obtained as the case $c\to \infty$ for the generator $(G_\mu, D(G_\mu))$ previously introduced. We have that, for $x \in (0, \infty)$, $\mathbf{P}_x(M_t^\mu=1, t < \tau_0^\mu)=1$ and
\begin{align*}
\mathbf{P}_x(\tau_0^\mu > t) 
= & \mathbf{E}_x[\mathbf{1}(\widetilde{X}^\mu_t), t< \tau_0^\mu] = \mathbf{E}_x[\mathbf{1}(X^\mu_t) M^\mu_t, t < \tau_0^\mu] \\
= & \int_0^\infty e^{-\frac{\mu^2}{4}t} e^{\frac{\mu}{2}(y-x)} \left[ g(t,x-y) - g(t,x+y) \right]dy.
\end{align*}
We recall that for $\mu=0$,
\begin{align*}
\int_0^\infty e^{-\lambda t} \mathbf{P}_x(\tau_0^0 > t) dt = \frac{1-e^{-x\sqrt{\lambda}}}{\lambda}
\end{align*}
and $\tau^0_0 | \widetilde{X}^0_0=x$ equals in law $H_x | H_0=0$. For $\mu \neq 0$,
\begin{align*}
& \int_0^\infty e^{-\lambda t}  \mathbf{E}_x[\mathbf{1}(\widetilde{X}^\mu_t), t< \tau_0^\mu]\, dt\\
= & \frac{1}{2} \int_0^\infty \frac{e^{(x-y)\sqrt{\lambda + \mu^2/4}}}{\sqrt{\lambda + \mu^2/4}} e^{\frac{\mu}{2}(y-x)}\, dy - \frac{1}{2} \int_0^\infty \frac{e^{-(x+y)\sqrt{\lambda + \mu^2/4}}}{\sqrt{\lambda + \mu^2/4}} e^{\frac{\mu}{2}(y-x)}\, dy \\
 - & \frac{1}{2} \int_0^x \left( \frac{e^{(x-y)\sqrt{\lambda + \mu^2/4}}}{\sqrt{\lambda + \mu^2/4}} - \frac{e^{-(x-y)\sqrt{\lambda + \mu^2/4}}}{\sqrt{\lambda + \mu^2/4}} \right) e^{\frac{\mu}{2}(y-x)}\, dy.
\end{align*}
Write $\lambda^\prime = \lambda + \mu^2/4$ with $\lambda>0$. Then $\sqrt{\lambda^\prime} > |\mu|/2$ and we get
\begin{align*}
& \int_0^\infty e^{-\lambda t}  \mathbf{E}_x[\mathbf{1}(\widetilde{X}^\mu_t), t< \tau_0^\mu]\, dt\\
= & \frac{1}{2} \frac{e^{-x (\frac{\mu}{2}-\sqrt{\lambda^\prime})} }{\sqrt{\lambda^\prime}} \frac{1}{\sqrt{\lambda^\prime} - \mu/2} - \frac{1}{2} \frac{e^{-x (\frac{\mu}{2} + \sqrt{\lambda^\prime})}}{\sqrt{\lambda^\prime}} \frac{1}{\sqrt{\lambda^\prime} - \mu/2}\\ 
& - \frac{1}{2} \left( \frac{e^{-x(\mu/2 - \sqrt{\lambda^\prime})}}{\sqrt{\lambda^\prime}} \frac{1-e^{-x(\sqrt{\lambda^\prime}-\mu/2)}}{\sqrt{\lambda^\prime}-\mu/2} - \frac{e^{-x(\sqrt{\lambda^\prime}+\mu/2)}}{\sqrt{\lambda^\prime}} \frac{e^{x(\sqrt{\lambda^\prime}+\mu/2)}-1}{\sqrt{\lambda^\prime}+\mu/2} \right)\\
= & \frac{1}{2} \frac{e^{-x (\frac{\mu}{2}-\sqrt{\lambda^\prime})} }{\sqrt{\lambda^\prime}} \frac{1}{\sqrt{\lambda^\prime} - \mu/2} - \frac{1}{2} \frac{e^{-x (\frac{\mu}{2} + \sqrt{\lambda^\prime})}}{\sqrt{\lambda^\prime}} \frac{1}{\sqrt{\lambda^\prime} - \mu/2}\\ 
& - \frac{1}{2} \left( \frac{1}{\sqrt{\lambda^\prime}} \frac{e^{-x(\mu/2 - \sqrt{\lambda^\prime})}-1}{\sqrt{\lambda^\prime}-\mu/2} - \frac{1}{\sqrt{\lambda^\prime}} \frac{1 - e^{-x(\sqrt{\lambda^\prime}+\mu/2)}}{\sqrt{\lambda^\prime}+\mu/2} \right)\\
= & \frac{1}{2\sqrt{\lambda^\prime} (\sqrt{\lambda^\prime} - \mu/2)} + \frac{1}{2\sqrt{\lambda^\prime}( \sqrt{\lambda^\prime}+\mu/2)} - \frac{e^{-x(\sqrt{\lambda^\prime} + \mu/2)}}{2\sqrt{\lambda^\prime}( \sqrt{\lambda^\prime}+\mu/2)} - \frac{e^{-x (\frac{\mu}{2} + \sqrt{\lambda^\prime})}}{2\sqrt{\lambda^\prime}(\sqrt{\lambda^\prime} - \mu/2)} \\
= & \frac{1}{2\sqrt{\lambda^\prime} (\sqrt{\lambda^\prime} - \mu/2)} + \frac{1}{2\sqrt{\lambda^\prime}( \sqrt{\lambda^\prime}+\mu/2)} - \frac{e^{-x (\frac{\mu}{2} + \sqrt{\lambda^\prime})}}{\lambda^\prime - \mu^2/4}\\
= & \frac{1 - e^{-x (\frac{\mu}{2} + \sqrt{\lambda^\prime})}}{\lambda^\prime - \mu^2/4}.
\end{align*}
That is,
\begin{align*}
\int_0^\infty e^{-\lambda t} \mathbf{E}_x[\mathbf{1}(\widetilde{X}^\mu_t), t< \tau_0^\mu]\, dt = \frac{1 - e^{-x (\frac{\mu}{2} + \sqrt{\lambda + \mu^2/4})}}{\lambda}
\end{align*}
We observe that, as $\lambda \to 0$, the previous formula says that $\mathbf{E}_x[\tau^\mu_0] < \infty$ only in case $\mu<0$. Since 
\begin{align*}
\mathbf{E}_x[e^{-\lambda \tau_0^\mu}] 
= & 1 - \lambda \int_0^\infty e^{-\lambda t} \mathbf{P}_x(\tau_0^\mu >t) dt \\
= & e^{-x (\frac{\mu}{2} + \sqrt{\lambda + \mu^2/4})}\\
= & \left\lbrace
\begin{array}{ll}
\displaystyle e^{-x (\sqrt{\lambda + \mu^2/4}- |\mu|/2)} = e^{-x\Phi(\lambda)}, & \mu\leq 0\\
\\
\displaystyle e^{-x (\sqrt{\lambda + \mu^2/4}- \frac{\mu}{2})} e^{- \mu x}, & \mu>0
\end{array}
\right.
\end{align*}
we conclude that
\begin{align*}
\mathbf{E}_x[e^{-\lambda \tau_0^\mu}] = \mathbf{E}_0[e^{-\lambda \mathcal{H}_x}], \quad \textrm{if} \quad \mu\leq 0
\end{align*}
and
\begin{align*}
\mathbf{E}_x[e^{-\lambda \tau_0^\mu}] = \mathbf{E}_0[e^{-\lambda \mathcal{H}_x} \mathbf{1}_{(x < T_\mu)}] = \mathbf{E}_0[e^{-\lambda \mathcal{H}^\dagger_x}], \quad \textrm{if} \quad \mu > 0
\end{align*}
where:
\begin{itemize}
\item $\mathcal{H}$ is a tempered subordinator with symbol 
\begin{align*}
\Phi(\lambda) = \sqrt{\lambda + \theta} - \sqrt{\theta}, \quad \theta=(\mu/2)^2; 
\end{align*}
\item $T_\mu$ is the exponential r.v. such that $\mathbf{P}(T_\mu>t)=e^{-\mu t}$; 
\item $\mathcal{H}^\dagger$ is a killed tempered subordinator with symbol
\begin{align*}
\Phi^\dagger(\lambda) = \mu + \Phi(\lambda).
\end{align*}
\end{itemize}
In particular, for positive drift, 
\begin{align}
    \big(\tau^\mu_0 | X^\mu_0=x \big) = \left\lbrace
        \begin{array}{ll}
             \mathcal{H}_x & x < T_\mu  \\
             +\infty & x\geq T_\mu 
        \end{array}
        \right .
\end{align}
This concludes the proof.

\subsection{Proof of Theorem \ref{thm:meanStoppedSWcase}}
\label{proof-thm:meanStoppedSWcase}

The result can be obtained by considering the associated elliptic problem. However, we provide further details for the interested readers.

    The mean value $\mathbf{E}_x[\tau^\mu_\ell]$ is known from Corollary \ref{coro:tauLEVEL}. The mean value $\mathbf{E}_x[\gamma \circ \tau^\mu_\ell]$ can be obtained by considering the mean difference between 
    \begin{align}
        \tau_\ell^{\delta, \mu} := \inf\{t\,:\, X^{\delta, \mu}_t =\ell\} \quad \textrm{and} \quad \tau^\mu_\ell := \inf\{t\,:\, X^\mu_t = \ell\}
    \end{align}
    where $X^{\delta, \mu}$ is driven by \eqref{PDEdelta} in which the elastic coefficient equals zero (observe that $c=0$ without loss of generality). For $\delta \in \{0,1\}$, the solution to \eqref{PDEdelta} has the representation
    \begin{align}
        u(t,x) = \mathbf{E}_x[f(X^\mu \circ V^{-1}_t), V^{-1}_t < \tau^\mu_\ell] = \mathbf{E}_x[f(X^\mu \circ V^{-1}_t), t < V \circ \tau^\mu_\ell]
    \end{align}
    where $V_t = t + \eta(\delta) \gamma_t$ produces the sticky behaviour on the boundary point and 
    \begin{align}
        \mathbf{P}_x(t < \tau^{\delta, \mu}_\ell) = \mathbf{P}_x(t < V \circ \tau^\mu_\ell), \quad t \geq 0.
    \end{align}
    We are only interested on mean hitting times, then we restrict our analysis on 
    \begin{align}
        v(x) = \int_0^\infty u(t,x) dt = \mathbf{E}_x\left[ \int_0^{V \circ \tau^\mu_\ell} f(X^\mu \circ V^{-1}_t) dt\right], \quad x \in [0, \ell).
        \label{vProbRep}
    \end{align}
    In particular, $\mathbf{E}_x[\tau^{\delta, \mu}_\ell]= \mathbf{E}_x[V \circ \tau^\mu_\ell]$ where
    \begin{align}
        \mathbf{E}_x[V \circ \tau^\mu_\ell] = \mathbf{E}_x[\tau^\mu_\ell] + \eta(\delta) \mathbf{E}_x[\gamma \circ \tau^\mu_\ell]. 
        \label{meanExtraProof}
    \end{align}
    
    In case $\delta=0$ we pass to the elliptic problem
    \begin{align*}
        \left\lbrace
        \begin{array}{ll}
             v^{\prime \prime}(x) +\mu v^\prime (x) = - 1, & x\in (0, \ell),\; \mu >0,  \\
            \eta v^{\prime \prime} (0) = v^\prime (0), &  \eta=\eta(0)>0,\\
            v(\ell) = 0 & 
        \end{array}
        \right .
    \end{align*}
    with solution 
    \begin{align}
        v(x) = \frac{\ell - x}{\mu} - \frac{e^{-\mu x} - e^{-\mu \ell}}{\mu^2} \left( 1 - \frac{\eta \mu}{1+\eta \mu} \right).
    \end{align}
    Thus, we obtain
    \begin{align}
        \eta(0) \mathbf{E}_x[\gamma \circ \tau^\mu_\ell] = \frac{\eta}{1 + \eta \mu} \frac{e^{-\mu x} - e^{-\mu \ell}}{\mu} 
    \end{align}
    for the extra time due to the second order boundary condition. \\

    In case $\delta=1$, we focus on the problem
    \begin{align*}
        \left\lbrace
        \begin{array}{ll}
             v^{\prime \prime}(x) +\mu v^\prime (x) = - 1, & x\in (0, \ell),\; \mu >0,  \\
            \eta \big (v^{\prime \prime} (0) +\mu u^\prime(0) \big) = v^\prime (0), &  \eta=\eta(1) \geq 0,\\
            v(\ell) = 0. & 
        \end{array}
        \right .
    \end{align*}
Here we get 
\begin{align}
    v(x) = \frac{\ell - x}{\mu} - \frac{e^{-\mu x} - e^{-\mu \ell}}{\mu^2} (1 - \eta \mu)
    \label{vUniqueSol}
\end{align}
with mean extra time 
    \begin{align}
        \eta(1) \mathbf{E}_x[\gamma \circ \tau^\mu_\ell] = \eta \frac{e^{-\mu x} - e^{-\mu \ell}}{\mu}.
    \end{align}
    Observe that $\eta(\delta) = \eta/(1+\eta (1-\delta) \mu) \geq 0$ gives the connection between problems in terms of $\delta \in \{0,1\}$. 
    Moreover, $V_t \geq t$ a.s. implies that
\begin{align*}
\eta(\delta) \mathbf{E}[\gamma \circ \tau^\mu_\ell] = \mathbf{E}[V \circ \tau^\mu_\ell - \tau^\mu_\ell]
\end{align*} 
is non negative as expected.

\subsection{Proof of Theorem \ref{thm:HTrdBM}} 
\label{proof-thm:HTrdBM}
Let us consider the natural filtration 
$$\mathcal{F}_t = \sigma\{X^{\delta, \mu}_s, \, 0\leq s < t\}.$$ 
The hitting time 
$$\tau_{(0, \ell)} = \inf\{t\,:\, X^{\delta, \mu}_t \in (0,\ell)\}$$ 
is an $\{\mathcal{F}_t\}$-stopping time and $\{\tau_{(0,\ell)} = 0 \} \subset \mathcal{F}_0$. Every $x\in [0, \ell)$ is regular for $(0,\ell)$ and $\mathbf{P}_x(\tau_{(0, \ell)} = 0) = 1$ if $x \in (0, \ell)$. For $X^{\delta, \mu}_0=0$, $\tau_{(0, \ell)} = e^\mu_0$ implies 
$$\mathbf{P}_0(X^{\delta, \mu}_{e^\mu_0}\in (0, \ell)) = 1$$ 
and 
$$\mathbf{P}_0(\tau_{(0,\ell)}  > 0) = 1,$$ 
that is 
\begin{align*}
\lim_{t \to 0} \mathbf{P}_0(e^\mu_0 >t) = \lim_{t \to 0} \mathbf{P}_0(X^{\delta, \mu}_t = 0)  = 1.
\end{align*} 
By definition $e^\mu_0$ is a holding time if $\mathbf{P}_0(X^{\delta, \mu}_{e^\mu_0} \in (0, \ell)) = 1$. On the other hand, $X^{\delta, \mu}$ is a Markov process on $[0, \ell)$, then
\begin{align*}
\mathbf{P}_0(e^\mu_0 > t, X^{\delta, \mu}_{e^\mu_0} \in (0, \ell)) = \mathbf{P}_0(e^\mu_0 > t), \quad t \geq 0
\end{align*}
which is \eqref{indHTfromX}. The process starts afresh after the holding time. 

By exploiting once again the Markovian nature of $X^{\delta, \mu}$ we have that
\begin{align*}
\mathbf{P}_0(X^{\delta, \mu}_{t+s} =0) = \mathbf{E}_0[\mathbf{E}_{X^{\delta, \mu}_s}[\mathbf{1}_{\{0\}}(X^{\delta, \mu}_t)]]
\end{align*}
and, by definition of $e^\mu_0$,
\begin{align*}
\mathbf{P}_0(X^{\delta, \mu}_{t+s} =0) = \mathbf{P}_0(e^\mu_0 > t+s).
\end{align*}
We get
\begin{align*}
\mathbf{E}_0[\mathbf{E}_{X^{\delta, \mu}_s}[\mathbf{1}_{\{0\}}(X^{\delta, \mu}_t)]] = \mathbf{E}_0[\mathbf{P}_{X^{\delta, \mu}_s}(e^\mu_0>t) \mathbf{1}_{\{0\}} (X^{\delta, \mu}_s)].
\end{align*}
with
\begin{align*}
\mathbf{P}_x(e^\mu_0>t) = \mathbf{E}_x[\mathbf{1}_{\{0\}} (X^{\delta, \mu}_t)]\, \mathbf{1}_{\{0\}} (x).
\end{align*}
That is, 
\begin{align*}
\mathbf{E}_0[\mathbf{E}_{X^{\delta, \mu}_s}[\mathbf{1}_{\{0\}}(X^{\delta, \mu}_t)]] = \mathbf{P}_{0}(e^\mu_0>t) \, \mathbf{E}_0[\mathbf{1}_{\{0\}} (X^{\delta, \mu}_s)] = \mathbf{P}_{0}(e^\mu_0>t)\, \mathbf{P}_{0}(e^\mu_0>s).
\end{align*}
This means that
\begin{align*}
\mathbf{P}_0(e^\mu_0 >t+s) = \mathbf{P}_0(e^\mu_0 >s) \mathbf{P}_0(e^\mu_0 >t)
\end{align*}
and $e^\mu_0$ is an exponential random variable. Moreover, the law of $e^\mu_0$ does not depend on $X^{\delta, \mu}_0 = 0$. Since the absorption (for $\eta \to \infty$) and the instantaneous reflection (for $\eta \to 0$) must  respectively imply, for all $t \geq 0$,
\begin{align*}
\mathbf{P}(e^\mu_0 >t) = 1 \quad \textrm{and} \quad \mathbf{P}(e^\mu_0 > t) = 0,
\end{align*} 
we must have $\mathbf{P}(e^\mu_0 > t) = e^{-(1/\eta) t}$, $t \geq 0$. Indeed, for $V_t = t + \eta \gamma_t$, we know that
\begin{align*}
\mathbf{E}[V_t - t] = \eta \mathbf{E}[\gamma_t] 
\end{align*} 
that is, the extra time at $\{0\}$ can be written as
\begin{align*}
\mathbf{E}[V_t - t] = \mathbf{E} \left[ \int_0^t e_s \, d\gamma_s \right]
\end{align*}
where 
\begin{equation}
e_s = 
\left\lbrace
\begin{array}{ll}
\chi, & s \in \{t\,:\, \gamma^{-1}_{t-} \neq \gamma^{-1}_t \}\\
e_{s-}, & \textrm{otherwise}
\end{array}
\right . \quad e_0=e^\mu_0 \stackrel{d}{=} \chi
\end{equation}
and $\chi$ is an exponential r.v. independent from $\gamma_t$. Thus $e_s$ is a step function jumping to an exponential value according with the jump of $\gamma^{-1}$, the inverse of the local time $\gamma$. These exponential values define the exponential holding times. We therefore obtain
\begin{align*}
\mathbf{E}[V_t - t] = \mathbf{E}[\chi] \mathbf{E} \left[ \int_0^t  d\gamma_s \right] = \mathbf{E}[\chi] \, \mathbf{E}[\gamma_t]
\end{align*}
with $\mathbf{E}[\chi] = \eta$, that is $\mathbf{P}(\chi > t) = e^{-(1/\eta) t}$. This also implies that $\{e^\mu_i \stackrel{d}{=} \chi\, ,\, i \in \mathbb{N}\}$ is the sequence of holding times. Indeed, $\{t\,:\, \gamma^{-1}_{t-} \neq \gamma^{-1}_t \}$ is a countable set.

\subsection{Proof of Theorem \ref{thm:NLBVPwithDir}}
\label{proof-thm:NLBVPwithDir}
The time change $V_t = t + \mathcal{H} \circ \eta_\varepsilon \gamma_t $ is right-continuous (and increasing) with continuous inverse $V^{-1}_t$ such that $V^{-1} \circ V_t = t$. We also use the fact that $\mathcal{H} \perp X^\mu$ and
\begin{align}
    \mathbf{E}_x[e^{-\lambda V_t} | X^\mu] = e^{-\lambda t - \eta_\varepsilon \Phi(\lambda) \gamma_t}, \quad \lambda>0.
\end{align}
We have that
\begin{align*}
& \mathbf{E}_x\left[\int_0^{V \circ \tau^\mu_\ell} e^{-\lambda t} f(X^\mu \circ V^{-1}_t) \, M^\mu \circ V^{-1}_t \, dt \right] \\
    = & \mathbf{E}_x\left[\int_0^{\tau^\mu_\ell} e^{-\lambda V_t} f(X^\mu_t) \, M^\mu_t \, dV_t \right]\\
    = & - \frac{1}{\lambda} \mathbf{E}_x\left[\int_0^{\tau^\mu_\ell} f(X^\mu_t) \, M^\mu_t \, de^{-\lambda V_t}  \right]\\
    = & - \frac{1}{\lambda} \mathbf{E}_x\left[\int_0^{\tau^\mu_\ell} f(X^\mu_t) \, M^\mu_t \, de^{-\lambda t - \eta_\varepsilon \Phi(\lambda) \gamma_t}  \right]\\
    = & \mathbf{E}_x\left[\int_0^{\tau^\mu_\ell} f(X^\mu_t) \, M^\mu_t \, e^{-\lambda t - \eta_\varepsilon \Phi(\lambda) \gamma_t} (dt + \eta_\varepsilon \frac{\Phi(\lambda)}{\lambda} d\gamma_t) \right]
\end{align*}
Now set
\begin{align}
    V_{\lambda, t} := t + \eta_\varepsilon \frac{\Phi(\lambda)}{\lambda} \gamma_t 
    \label{newV}
\end{align}
and write
\begin{align*}
& \mathbf{E}_x\left[\int_0^{V \circ \tau^\mu_\ell} e^{-\lambda t} f(X^\mu \circ V^{-1}_t) \, M^\mu \circ V^{-1}_t \, dt \right] \\
    = & \mathbf{E}_x\left[\int_0^{\tau^\mu_\ell} f(X^\mu_t) \, M^\mu_t \, e^{-\lambda V_{\lambda, t}} dV_{\lambda, t} \right]\\
    = & \mathbf{E}_x\left[\int_0^{V_\lambda \circ \tau^\mu_\ell} e^{-\lambda t} f(X^\mu \circ V^{-1}_{\lambda, t}) \, M^\mu \circ V^{-1}_{\lambda, t} \, dt \right]\\
    = & : R_\lambda f(x), \quad x \in [0, \ell), \quad \lambda > 0,
\end{align*}
that is, $R_\lambda f = \int e^{-\lambda t} u\, dt$. Observe that the new time change
\begin{align}
    V_{\lambda, t} = \int_0^t \gamma^z_t m_\lambda(dz), \quad m_\lambda(dz) = dz + \eta_\varepsilon \frac{\Phi(\lambda)}{\lambda} \delta_0(dz)
\end{align}
introduces the (Dirac) measure on the boundary point $\{0\}$ for the sticky effect. Let us write
\begin{align}
    V_{\beta, t} = \int_0^t \gamma^z_t m_\beta(dz), \quad m_\beta(dz) = dz + \eta_\varepsilon \frac{\Phi(\beta)}{\beta} \delta_0(dz), \quad \beta >0.
\end{align}
Since $X^\mu \circ V^{-1}_{\beta, t}$ is a sticky Brownian motion, $\forall \, \beta>0$, we have that
\begin{align}
    Q^{\mu, \beta}_t f(x) := \mathbf{E}_x\left[ f(X^\mu \circ V^{-1}_{\beta, t}) \, M^\mu \circ V^{-1}_{\beta, t}, \, t < V_\beta \circ \tau^\mu_\ell \right], \quad t\geq 0,\; x \in [0, \ell)]
    \label{RepL2}
\end{align}
is a $C_0$-semigroup on $L^2(m_\beta)$. Moreover, there exists a continuous kernel, say $p_\beta$, for which \eqref{RepL2} has the representation
\begin{align*}
    \int_{[0, \infty)} f(y) \, p_\beta (t,x, y) m_\beta(dy), \quad f \in C[0, \infty)]
\end{align*}
which provides the $C_0$-semigroup on $C[0, \infty)$. In particular, for every $\beta>0$, the semigroup \eqref{RepL2} solves the problem
\begin{equation}
\left\lbrace
\begin{array}{ll}
     \displaystyle \frac{\partial Q^{\mu, \beta}_t f}{\partial t} = G_\mu \,Q^{\mu, \beta}_t f & \textrm{in } (0, \infty) \times (0, \ell) \\
     \\
     \displaystyle \eta_\varepsilon \frac{\Phi(\beta)}{\beta} \, G_\mu Q^{\mu, \beta}_t f = (Q^{\mu, \beta}_t f)^\prime - c\, Q^{\mu, \beta}_t f & \textrm{in } (0, \infty) \times \{0\}  \\
     \\
     \displaystyle Q^{\mu, \beta}_t f = 0 & \textrm{in } (0, \infty) \times \{\ell\}\\
     \\
     \displaystyle Q^{\mu, \beta}_0 f = f & f \in C[0, \ell)].
\end{array}
\right .
\label{eqQsemig}
\end{equation}
We only add a Dirichlet stopping time to the problem \eqref{SWpde}. Moreover, for 
\begin{align*}
R^\beta_\lambda f(x) := \mathbf{E}_x \left[ \int_0^\infty e^{-\lambda t} Q^{\mu, \beta}_t f(x)\, dt \right] 
\end{align*}
we have
\begin{align*}
R_\lambda f = R^\beta_\lambda f \quad \textrm{in case} \quad \beta=\lambda.
\end{align*}
Thus, $R_\lambda f \in D(G^{1, c}_\mu)$  and
\begin{align*}
    R_\lambda f, G_\mu R_\lambda f \in C[0, \ell) \; : \; \eta_\varepsilon \frac{\Phi(\lambda)}{\lambda} G_\mu R_\lambda f |_{x=0} = (R_\lambda f)^\prime |_{x=0} - c R_\lambda f |_{x=0}.
\end{align*}
The fact that $u \in D_\phi$ ensures the existence of $D^\Phi_t u$ and
\begin{align}
    \int_0^\infty e^{-\lambda t} \, \eta_\varepsilon\,  D^\Phi_t u(t,0) \, dt = \eta_\varepsilon \frac{\Phi(\lambda)}{\lambda} \Big( \lambda R_\lambda f - f \Big) \Big|_{x=0}, \quad \lambda>0.
\end{align}
Thus, from \eqref{RepL2} we write 
\begin{align*}
    R_\lambda f \in C[0, \ell) \; : \; \eta_\varepsilon \frac{\Phi(\lambda)}{\lambda} \Big( \lambda R_\lambda f - f \Big) |_{x=0} =  (R_\lambda f)^\prime |_{x=0} - c R_\lambda f |_{x=0}
\end{align*}
and the dynamic boundary condition 
\begin{align}
    \eta_\varepsilon\,  D^\Phi_t u(t,0) = u^\prime(t, 0) - u(t, 0), \quad t>0
\end{align}
appears. Since $u(\cdot, x) \in C(0, \infty)$ $\forall\, x \in [0, \ell)$, then $R_\lambda f$ has a unique inverse.

\subsection{Proof of Theorem \ref{thm:NLBVP}}
\label{proof-thm:NLBVP}

Assume $X^\bullet$ is a Markov process for which $u(t,x) = \mathbf{E}_x[f(X^\bullet_t)]$ solves the problem
\begin{equation}
\left\lbrace
\begin{array}{ll}
\displaystyle \dot{u}(t,x) = G_\mu u(t,x), & t>0,\, x \in (0, \infty), \quad \mu<0,\\ 
\\
\displaystyle \mathbf{D}^\Upsilon_x u(t, x) |_{x=0} = 0, & t>0,\\
\\
\displaystyle u(0, x) = f(x), & x \in [0, \infty), \quad f \in C_b[0, \infty).
\end{array}
\right .
\label{app-thm:NLBVP}
\end{equation}
Then, from Theorem \ref{thm:NLBVPwithDir}, $X^\bullet \circ V^{-1}_t$ solves the problem \eqref{NLBVPnodes}. 

Now we argue on \eqref{app-thm:NLBVP} by considering the likelihood ratio $\mathbb{L}$ such that
\begin{align*}
\mathbf{E}^{\mathbf{P}} [f(X^\mu_t)] = \mathbf{E}^{\mathbb{P}} [\mathbb{L}(Y_t) f(Y_t)].
\end{align*}
In particular, 
\begin{align*}
u(t, x) 
= & \mathbf{E}_x[f(X^\mu_t)] = e^{- \frac{\mu}{2} x} \mathbf{E}_x[e^{\frac{\mu}{2} Y_t -\frac{\mu^2}{4} t} f(Y_t)] = e^{- \frac{\mu}{2} x} e^{-\frac{\mu^2}{4} t} v(t,x)
\end{align*}
where $Y=\{Y_t\}_{t\geq 0}$ is an elastic Brownian motion driven by
\begin{equation}
\left\lbrace
\begin{array}{ll}
\displaystyle \dot{v} = v^{\prime \prime}, & (0, \infty) \times [0, \infty),\\
\displaystyle v^\prime = (c + \frac{\mu}{2}) v, & (0, \infty) \times \{0\},\\ 
\displaystyle v(0,x) = f(x), & f \in C_b([0, \infty)).
\end{array}
\right .
\end{equation}
We get $\dot{u} = u^{\prime \prime} - \mu u^{\prime}$ and $\dot{v}=v^{\prime \prime}$, moreover we obtain $v^\prime = 0$ by setting $c=\frac{|\mu|}{2}$, recall that $\mu<0$. Thus, $Y$ is a reflected (elastic) Brownian motion. Let us define new functions $u$ and $v$:
\begin{align*}
u(t,x) = e^{- \frac{\mu}{2} x} \mathbf{E}_x[e^{\frac{\mu}{2} Y_t + \frac{\mu}{2} A\circ \gamma_t(Y) -\frac{\mu^2}{4} t} f(Y_t + A\circ \gamma_t(Y))] = e^{-\frac{\mu}{2} x} e^{-\frac{\mu^2}{4} t} v(t,x).
\end{align*} 
Then, from \cite{BonColDovPag}, the new function $v$ solves
\begin{equation}
\left\lbrace
\begin{array}{ll}
\displaystyle \dot{v} = v^{\prime \prime}, & (0, \infty) \times [0, \infty),\\
\displaystyle {\bf D}^\Upsilon_x v = (c + \frac{\mu}{2}) v, & (0, \infty) \times \{0\},\\ 
\displaystyle v(0,x) = f(x), & f \in C_b([0, \infty)).
\end{array}
\right .
\end{equation}
Assume $t=0$ and observe that
\begin{align*}
& \lim_{x\to 0} {\bf D}^\Upsilon_x u(x) = \lim_{x\to 0} {\bf D}^\Upsilon_x e^{-\frac{\mu}{2} x} v(x)\\
= & \lim_{x \to 0} \int_0^\infty \left( e^{-\frac{\mu}{2} x} v(x) - e^{-\frac{\mu}{2} (x+z)} v(x+z) \right) \mathbf{P}(J > z) dz \\
= & \lim_{x \to 0} e^{-\frac{\mu}{2} x + \frac{1}{\eta_\varepsilon} x} \int_x^\infty \left( v(x) - v(y) \right) e^{-\frac{1}{\eta_\varepsilon} y} dy\\
= & \lim_{x \to 0} \frac{e^{-\frac{\mu}{2} x}}{\mathbf{P}(J>x)} \int_x^\infty \left( v(x) - v(y) \right) \mathbf{P}(J >y) dy\\
= & \int_0^\infty \left( v(0) - v(y) \right) \mathbf{P}(J >y) dy\\
= & {\bf D}^\Upsilon_x v(x) \big|_{x=0}.
\end{align*}
Thus, for $t\geq 0$, we get
\begin{equation}
\left\lbrace
\begin{array}{ll}
\displaystyle \dot{u} = u^{\prime \prime} - \mu u^\prime, & (0, \infty) \times [0, \infty),\\
\displaystyle {\bf D}^\Upsilon_x u = (c + \frac{\mu}{2}) u , & (0, \infty) \times \{0\},\\ 
\displaystyle u(0,x) = f(x), & f \in C_b([0, \infty)).
\end{array}
\right .
\end{equation}
Notice that $\mu < 0$. As $c+\mu/2 = 0$, we get the claim.

\subsection{Proof of Theorem \ref{thm:equivE}} 
\label{proof-thm:equivE}

Under the setting of the previous sections, we consider the problem \eqref{NLBVPnodes} and the representation in Theorem \ref{thm:NLBVP}. Recall that $X^{[\nu]}_t = X^\bullet \circ V^{-1}$ is right-continuous and the jumps are obtained as the process approaches the point $\{0\}$ according with $\Upsilon$ (see Section \ref{sec:UpsilonExpJumps}). It is crucial that Assumption \ref{ASS3} holds true. 

By definition of $E$, for $f\in C_b(0, \infty)$ we can write 
\begin{align}
    \mathbf{E}_{h_0}[f(E_t), t < \tau_1] = \mathbf{E}_{h_0}[f(X^{\mu_1}_t),\, 0 \leq t < \tau^\mu_0] + \mathbf{E}_{h_0}[f(h_1),\, \tau^\mu_0 \leq t < \tau_1]
    \label{Epdeh0}
\end{align}
and, for a given $i \in \mathbb{N}$, we have that
\begin{align*}
    \mathbf{E}_{h_{i-1}}[f(E_t), \tau_{i-1} \leq t < \tau_i] = \mathbf{E}_{h_{i-1}}[f(X^{\mu_i}_t),\, 0 \leq t < \tau^{\mu_i}_0] + \mathbf{E}_{h_{i-1}}[f(h_i),\, \tau^{\mu_i}_0 \leq t < \tau_1].
\end{align*}
Recall that $X^\mu$ is a Markov process. Focus on the latter formula.\\

We consider that $\tau_i = \tau^E_i + \tau^W_i$ is the sum of the running time and the holding time at $J=h_i$ of $E$. We use the fact that (Theorem \ref{thm:hittingTime}) $\forall\, i$, $\tau^E_i$ equals in law $(\tau^\mu_0 | X^\mu_0=h_{i-1})$ with negative drift $\mu$ such that $|\mu| \in (m_1, \ldots, m_N)$. Moreover, the holding time of $E$ is given by $\tau^W_i \stackrel{d}{=} \mathcal{H}^\Psi \circ e^\mu_i$ where $e^\mu_i$ is an holding time for $X^\mu$. The reader should have in mind Theorem \ref{thm:HTrdBM} and Section \ref{sec:SeismicWavePropagation}. In particular, for the $i$-th seismic wave, we have that
\begin{align*}
\tau^E_i \stackrel{d}{=} \mathcal{H}^\Phi \circ h_{i-1}, \quad \textrm{with} \quad \mathbf{E}[h_{i-1}] = \eta_\varepsilon
\end{align*}
is a running time for $E$ and $X^{[\nu]}$ as well as
\begin{align*}
\tau^E_i \stackrel{d}{=} \mathcal{H}^\Phi \circ e^{\mu_i}_0, \quad \mu_i \in \{v_r\}_{r=1, \ldots, N}, \quad \textrm{with} \quad \mathbf{E}[e^{\mu_i}_0] = \eta_\varepsilon
\end{align*}
is an holding time for $W$ and $X^{[\varepsilon]}$. Moreover, $\tau^W_i$ is an excursion time for $W$ and $X^{[\varepsilon]}$ but also an holding time for $E$ and $X^{[\nu]}$, from this we obtain
\begin{align*}
\tau^W_i \stackrel{d}{=} \mathcal{H}^\Psi \circ e^{\mu_i}_0, \quad \mu_i \in \{-m_r\}_{r=1, \ldots, N} \quad \textrm{with} \quad \mathbf{E}[e^{\mu_i}_0] = \eta_\nu .
\end{align*}
In our construction we completely neglect the characterization of $\tau^W_i$. Thus, we do not care about $\{\mathcal{H}^\Psi \circ e^{\mu_i}_0\}_i$, we only know that the sequence exists. It is controlled by the operator $\eta_\nu D^\Psi_t$ in the boundary condition of \eqref{NLBVPnodes}.

By collecting the previous arguments, for  $i \in \{1, \ldots, \mathfrak{N}\}$, we have
\begin{align*}
\tau_i \stackrel{d}{=} \mathcal{H}^\Phi \circ h_{i-1} + \mathcal{H}^\Psi \circ e^{\mu_i}_0
\end{align*}
and we can write the right-hand side of the formula above as
\begin{align*}
\mathbf{E}_{h_{i-1}}[f(X^{\mu_i}_t),\, 0 \leq t < \mathcal{H}^\Phi_{h_{i-1}}] + \mathbf{E}_{h_{i-1}}[f(h_i),\, \mathcal{H}^\Phi_{h_{i-1}} \leq t < \mathcal{H}^\Phi_{h_{i-1}} + \mathcal{H}^\Psi_{e^{\mu_i}_0}].
\end{align*}
Since 
\begin{align*}
\mathbf{E}_x[f(X^{[\nu]}_t), 0 \leq t < \tau^\mu_0 + \mathcal{H}^\Psi_{e^{\mu}_0}] = \mathbf{E}_x[f(X^{\mu}_t), 0 \leq t < \tau^\mu_0] + \mathbf{E}_x[f(J), \tau^\mu_0 \leq t < \tau^\mu_0 + \mathcal{H}^\Psi_{e^{\mu}_0}]
\end{align*}
equals
\begin{align*}
\mathbf{E}_{x}[f(X^{\mu} \circ V^{-1}_t + A \circ \gamma \circ V^{-1}_t),\, 0 \leq t < \tau^\mu_0 + \mathcal{H}^\Psi_{e^{\mu}_0}],
\end{align*}
we get
\begin{align*}
\mathbf{E}_{x}[f(E_t), \tau_{i-1} \leq t < \tau_{i}] = \mathbf{E}_{x}[f(X^{\mu_i} \circ V^{-1}_t + A \circ \gamma \circ V^{-1}_t),\, 0 \leq t < \tau^{\mu_i}_0 + \mathcal{H}^\Psi_{e^{\mu_i}_0}] 
\end{align*}
with $V_t = t + \mathcal{H}^\Psi \circ \eta_\nu \gamma^0_t (X^\mu)$ and $(\tau^\mu_0 | X^\mu_0 = x) \stackrel{d}{=} \mathcal{H}^\Phi_x$. This proves \eqref{formula2-thm:equivE}.\\

Now we consider $h_i$ as a random variable. Recall that, $\forall\, i$, $\tau^E_i$ can be associated with $\tau^{[\nu]}_{h_i}$, that is the time at which a jump occurs (see formula \eqref{jumpXnode}). Due to $\Upsilon$ (see Section \ref{sec:UpsilonExpJumps}), the jump $J$ is an exponential r.v. with parameter $1/\eta_\varepsilon$. Under Assumption \ref{ASS3}, 
\begin{align}
h_i \, \textrm{ equals in law } \, J \, \textrm{ for every excursion $i$}. 
\label{J-thm:equivE}
\end{align}
Thus we have a link between the jumps of $E$ and $\Upsilon$ in the problem \eqref{NLBVPnodes}. Recall that (see formula \eqref{jumpOnlyXnode}) $V_t = t$ for $\tau_{i-1} \leq t < \tau^{[\nu]}_{h_i} \stackrel{d}{=}\tau^{[\nu]}_J$, that is $X^{[\nu]}_t = X^{\mu_i}_t$ for $\tau_{i-1} \leq t < \tau^{[\nu]}_{h_i}\stackrel{d}{=}\tau^{[\nu]}_J$. The right-hand side of the formula above takes the form
\begin{align*}
    \mathbf{E}_{h_{i-1}}[f(E_t), \tau_{i-1} \leq t < \tau_i] = \mathbf{E}_{h_{i-1}}[f(X^{[\nu]}_t),\, \tau_{i-1} \leq t < \tau^{[\nu]}_{h_i}] + \mathbf{E}_{h_{i-1}}[f(h_i)],\, \tau^{[\nu]}_{h_i} \leq t < \tau_i].
\end{align*}
For the first seismic wave we have
\begin{align*}
\mathbf{E}_{h_0}[f(E_t), 0 \leq t < \tau_1] = \mathbf{E}_{h_0}[f(X^{[\nu]}_t, 0 \leq t < \tau_1)]
\end{align*}
which is \eqref{Epdeh0}. By considering the (continuous) excursion of $X^{[\nu]}$ on $(0, \infty)$, 
\begin{align*}
\{X^{[\nu]}_t, \, 0 \leq t < \mathcal{H}^\Psi_{e^{\mu_i}_0} + \tau^{\mu_{i+1}}_0 \},
\end{align*}
with \eqref{J-thm:equivE} at hand, we have
\begin{align*}
\mathbf{E}_x[f(E_t)] = \mathbf{E}_x[f(X^{[\nu]}_t)], \quad t \geq 0, \; x \in [0, \infty)
\end{align*}
which is \eqref{formula1-thm:equivE}.

\subsection{Proof of Theorem \ref{thm:equivW}} 
\label{proof-thm:equivW}

First we show that \eqref{formula2-thm:equivW} solves \eqref{NLBVPedges}. Since we consider a given region $r$, then $\Phi=\Phi_r$ and $\mu_i= m_r$. We have motion on the region $r$ and this implies that $h_r>h_*$, that is
\begin{align*}
\zeta^W_{abs} > (\zeta^W_{kil} \wedge \zeta^W_{reg}) \quad \textrm{and} \quad \zeta^{[\nu]}_{abs} > (V \circ \zeta^{\mu_i} \wedge V \circ \tau^{\mu_i}_\ell).
\end{align*}
Assume for a while that $c=0$, then almost surely $\zeta^W_{kil}= \infty$ as well as $\zeta^{[\varepsilon]}=\infty$. For the $i$-th seismic wave, 
\begin{align*}
\mathbf{E}_0 [f(W_t), \, \tau_{i-1} \leq t < \tau_i ]
\end{align*}
equals
\begin{align*}
\mathbf{E}_0 [f(0), \, \tau_{i-1} \leq t < \tau_{i-1} + \tau^E_i] + \mathbf{E}_0[f(X^{\mu_i}_t),\, \tau_{i-1} + \tau^E_i \leq t < \tau_i]
\end{align*}
where
\begin{align*}
\tau^E_i \stackrel{d}{=} \mathcal{H}^\Phi \circ h_i \quad \textrm{with} \quad \mathbf{E}[h_i] = \eta_\varepsilon
\end{align*}
as hitting time $(\tau^\mu_0 | X^{\mu}_0 = h_i)$ of $X^{\mu_i}$ with $\mu_i = -m_r < 0$. Under Assumption \ref{ASS3}, 
\begin{align*}
\tau^E_i \stackrel{d}{=} \mathcal{H}^\Phi \circ e^{\mu}_0 \quad \textrm{with} \quad \mathbf{E}[e^\mu_0] = \eta_\varepsilon 
\end{align*}
which is an holding time of $X^{[\varepsilon]}_t$ introduced by $\eta_\varepsilon D^\Phi_t$ in the boundary condition of \eqref{NLBVPedges} as Corollary \ref{coro:HTverteces} entails. Thus, the processes $W$ and $X^{[\varepsilon]}$ have a Brownian excursion (given by $X^\mu$ with $\mu \in \{v_r\}_r$) only after an holding time identically distributed as $\mathcal{H}^\Phi \circ e^\mu_0$. This means that
\begin{align*}
\mathbf{E}_0[f(W_t), \,t < \zeta^W_{reg}] = \mathbf{E}_0[f(X^{[\varepsilon]}_t), \, t < \tau^{[\varepsilon]}_\ell] 
= & \mathbf{E}_x[f(X^\mu \circ V^{-1}_t),\, V^{-1}_t < \tau^\mu_\ell]\\
= & \mathbf{E}_x[f(X^\mu \circ V^{-1}_t),\, t < V \circ \tau^\mu_\ell]
\end{align*}
where $\mu=\mu_i$ is the drift for the $i$-th seismic wave. For $c>0$ we only need to consider the elastic kill, that is
\begin{align*}
\mathbf{E}_0[f(W_t), \,t < \zeta^W_{reg} \wedge \zeta^W_{kill}] = \mathbf{E}_0[f(X^{[\varepsilon]}_t), \, t < \tau^{[\varepsilon]}_\ell \wedge \zeta^{[\varepsilon]}] 
\end{align*}
which takes the form
\begin{align*}
\mathbf{E}_0[f(X^{\mu}_t), \, t < V \circ \tau^{\mu}_\ell \wedge V \circ \zeta^{\mu}] 
\end{align*}
with $\mu= \mu_i$. This proves \eqref{formula2-thm:equivW}. 

Now assume that $\zeta^W_{abs} = \infty$ as well as $\zeta^{[\varepsilon]}$. This means that $E \perp W$ as well as $X^{[\nu]} \perp X^{[\varepsilon]}$. Moreover, the process $X^{[\varepsilon]}$ can reach the level $\ell$ and switch a new process according with the rule \eqref{ruleZetaReg}. Thus, $X^{[\varepsilon]}$ starts as a new process form $X^{[\varepsilon]}_0=0$.

In case $\zeta^W_{abs}$ and therefore $\zeta^{[\varepsilon]}$ are finite, then we only need to consider these absorption times and the equality \eqref{formula1-thm:equivW} follows.

\subsection{Proof of Theorem \ref{thm:NLBVPfinalG}} 
\label{proof-thm:NLBVPfinalG}

We only consider the case $c=0$. The case $c>0$ follows after standard arguments including the killing time $\zeta(\mathsf{S})$.

Observe that 
\begin{align*}
& \mathbf{E}_{\mathsf{v}}\left[ \int_0^{\infty} e^{-\lambda t} f(\mathsf{Q}_t)\, dt \right]\\ 
= & \mathbf{E}_{(\varepsilon, x)}\left[ \int_0^{\tau^{[\Theta]}_\ell} e^{-\lambda t} f(\Theta_t, X^{[\Theta]}_t)\, dt \right]\\
     = & \mathbf{E}_{(\varepsilon, x)} \left[ \int_0^{\tau^{[\Theta]}_\ell} e^{-\lambda t} f(\Theta_t, X^{[\Theta]}_t) dt \, , \, (\tau^{[\varepsilon]}_\ell < \tau^{[\varepsilon]}_0) \cup (\tau^{[\varepsilon]}_0 < \tau^{[\varepsilon]}_\ell)  \right]  \\
     = & I_1 + I_2
\end{align*}
\begin{align*}
& \mathbf{E}_{\mathsf{v}}\left[ \int_0^{\infty} e^{-\lambda t} f(\mathsf{Q}_t)\, dt \right]\\ 
= & \mathbf{E}_{(\varepsilon, x)}\left[ \int_0^{\tau^{[\Theta]}_\ell} e^{-\lambda t} f(\Theta_t, X^{[\Theta]}_t)\, dt \right]\\
     = & \mathbf{E}_{(\varepsilon, x)} \left[ \int_0^{\tau^{[\Theta]}_\ell} e^{-\lambda t} f(\Theta_t, X^{[\Theta]}_t) dt \, , \, (\tau^{[\varepsilon]}_\ell < \tau^{[\varepsilon]}_0) \cup (\tau^{[\varepsilon]}_0 < \tau^{[\varepsilon]}_\ell)  \right]  \\
     = & I_1 + I_2
\end{align*}
where
\begin{align}
    I_1 = \mathbf{E}_{(\varepsilon, x)} \left[ \int_0^{\tau^{[\varepsilon]}_\ell} e^{-\lambda t} f(\varepsilon, X^{[\varepsilon]}_t) dt \, , \, \tau^{[\varepsilon]}_\ell < \tau^{[\varepsilon]}_0  \right]
\end{align}
and $I_2 = I_{2,1} + I_{2,2}$ with
\begin{align*}
I_{2,1} = & \mathbf{E}_{(\varepsilon, x)} \left[ \int_0^{\tau^{[\varepsilon]}_0} e^{-\lambda t} f(\varepsilon, X^{[\varepsilon]}_t) dt \, , \, \tau^{[\varepsilon]}_0 < \tau^{[\varepsilon]}_\ell  \right]  
\end{align*}
and
\begin{align*}
I_{2,2} = & \mathbf{E}_{(\varepsilon, x)} \left[ \int_{\tau^{[\varepsilon]}_0}^{\tau^{[\Theta]}_\ell} e^{-\lambda t} f(\Theta_t, X^{[\Theta]}_t) dt \, , \, \tau^{[\varepsilon]}_0 < \tau^{[\varepsilon]}_\ell  \right]
\end{align*}
We get
\begin{align*}
I_1 + I_{2,1} = \mathbf{E}_{(\varepsilon, x)} \left[ \int_0^{\tau^{[\varepsilon]}_0 \wedge \tau^{[\varepsilon]}_\ell} e^{-\lambda t} f(\varepsilon, X^{[\varepsilon]}_t) dt \right]
\end{align*}
and
\begin{align*}
I_{2,2} =  & \mathbf{E}_{(\varepsilon, x)} \left[ \int_{\tau^{[\varepsilon]}_0}^{\tau^{[\Theta]}_\ell} e^{-\lambda t} f(\Theta_t, X^{[\Theta]}_t) dt \, , \, \tau^{[\varepsilon]}_0 < \tau^{[\varepsilon]}_\ell  \right]\\ 
    = & \sum_{\varepsilon^\prime \in \mathcal{E}} \rho_{\varepsilon^\prime} \mathbf{E}_{(\varepsilon, x)} \left[ \int_{\tau^{[\varepsilon]}_0}^{\tau^{[\varepsilon^\prime]}_\ell} e^{-\lambda t} f(\varepsilon^\prime, X^{[\varepsilon^\prime]}_t) dt \, , \, \tau^{[\varepsilon]}_0 < \tau^{[\varepsilon]}_\ell  \right]\\
    = & \mathbf{E}_{(\varepsilon, x)} \left[ e^{-\lambda \tau^{[\varepsilon]}_0}, \tau^{[\varepsilon]}_0 < \tau^{[\varepsilon]}_\ell \right] \sum_{\varepsilon^\prime \in \mathcal{E}} \rho_{\varepsilon^\prime}   \mathbf{E}_{(\cdot, 0)} \left[ \int_0^{\tau^{[\varepsilon^\prime]}_\ell} e^{-\lambda t} f(\varepsilon^\prime, X^{[\varepsilon^\prime]}_t) dt \right]
\end{align*}
where $\tau^{[\varepsilon^\prime]}_\ell = \inf\{t\,:\, X^{[\varepsilon^\prime]}_t = \ell \, |\, X^{[\varepsilon^\prime]}_0 = 0\}$ and $\tau^{[\varepsilon^\prime]}_0 = \inf\{t\,:\, X^{[\varepsilon^\prime]}_t = 0 \, |\, X^{[\varepsilon^\prime]}_0 = x\}$ is such that
\begin{align*}
    \mathbf{E}_{(\varepsilon, x)} \left[ e^{-\lambda \tau^{[\varepsilon]}_0} \right] = \mathbf{E}_0 \left[ e^{-\lambda \mathcal{H}_x} \right] = e^{-x \Phi(\lambda)}, \quad \lambda>0
\end{align*}
($X^{[\varepsilon^\prime]}$ behaves like $X^\mu$ on $(0, \ell)$) and
\begin{align*}
    \mathbf{E}_{(\varepsilon, x)} \left[ e^{-\lambda \tau^{[\varepsilon]}_0} , \tau^{[\varepsilon]}_0 < \tau^{[\varepsilon]}_\ell \right] = \mathcal{K}_\lambda^{[\varepsilon]} (x), \lambda>0,\; x \in [0, a)
\end{align*}
is the solution to
\begin{align*}
    (\lambda - G_\mu) \, \mathcal{K}_\lambda^{[\varepsilon]}=0 \quad \textrm{with} \quad \mathcal{K}_\lambda^{[\varepsilon]} (0)=1,\; \mathcal{K}_\lambda^{[\varepsilon]} (a)=0.
\end{align*}
Thus, $\mathcal{K}_\lambda^{[\varepsilon]} = \mathcal{K}_\lambda \in \mathsf{K}$, $\forall\, \varepsilon \in \mathcal{E}$. We can write 
\begin{align*}
    R_\lambda f(\varepsilon, x) := \mathbf{E}_{(\varepsilon, x)}\left[ \int_0^{\tau^{[\Theta]}_\ell} e^{-\lambda t} f(\Theta_t, X^{[\Theta]}_t)\, dt \right]
\end{align*}
as
 \begin{align*}
    I_1 + I_2 
    = & I_1 + I_{2,1} + \mathcal{K}_\lambda^{[\varepsilon]} (x) \sum_{\varepsilon^\prime \in \mathcal{E}} \rho_{\varepsilon^\prime} \,   \mathbf{E}_{(\cdot, 0)} \left[ \int_0^{\tau^{[\varepsilon^\prime]}_\ell} e^{-\lambda t} f(\varepsilon^\prime, X^{[\varepsilon^\prime]}_t) dt  \right]\\
    = & \mathbf{E}_x \left[ \int_0^{\tau^{\mu}_0 \wedge \tau^{\mu}_\ell} e^{-\lambda t} f_\varepsilon(X^\mu_t) dt \right] +  \mathcal{K}_\lambda^{[\varepsilon]}(x) \sum_{\varepsilon^\prime \in \mathcal{E}} \rho_{\varepsilon^\prime} \,   R_\lambda f_{\varepsilon^\prime}(0).
\end{align*}
Now consider the Dirichlet semigroup
\begin{align*}
    R^\dagger_\lambda f_\varepsilon (x) = \mathbf{E}_{(\varepsilon, x)} \left[ \int_0^{\tau^\mu_0 \wedge \tau^\mu_\ell} e^{-\lambda t} f_{\varepsilon}(X^\mu_t) dt \right], 
\end{align*}
so that
\begin{align}
    R_\lambda f(\varepsilon, x) = R^\dagger_\lambda f_\varepsilon (x) + \mathcal{K}_\lambda^{[\varepsilon]} (x) 
 \sum_{\varepsilon^\prime \in \mathcal{E}} \rho_{\varepsilon^\prime} R_\lambda f_{\varepsilon^\prime}(0).
    \label{RrepSUM}
\end{align}
We first observe that $R_\lambda f$ belongs to $\mathsf{K}$ and
\begin{align}
    R_\lambda f(\cdot, 0) = \sum_{\varepsilon^\prime \in \mathcal{E}} \rho_{\varepsilon^\prime} R_\lambda f_{\varepsilon^\prime}(0).
\end{align}
Since $\lambda R^\dagger_\lambda f_\varepsilon - f_\varepsilon = G_\mu R^\dagger_\lambda f_\varepsilon$, it holds that
\begin{align*}
    G_\mu R_\lambda f(\varepsilon, x) = & \lambda R^\dagger_\lambda f_\varepsilon(x) - f_\varepsilon(x) + \lambda \mathcal{K}_\lambda^{[\varepsilon]} (x) \sum_{\varepsilon^\prime \in \mathcal{E}} \rho_{\varepsilon^\prime} R_\lambda f_{\varepsilon^\prime}(0)\\ 
 = & \lambda R_\lambda f(\varepsilon, x) - f(\varepsilon, x)
\end{align*}
on $(0, \ell)$. Moreover, recall that $\sum_{\varepsilon^\prime} \rho_{\varepsilon^\prime} =1$ and
\begin{align*}
    \eta \frac{\Phi(\lambda)}{\lambda} \Big( \lambda R_\lambda f(\cdot, 0) - f(\cdot, 0) \Big) =  \sum_{\varepsilon^\prime \in \mathcal{E}} \rho_{\varepsilon^\prime} \eta \frac{\Phi(\lambda)}{\lambda} \Big( \lambda  R_\lambda f_{\varepsilon^\prime}(0) - f_{\varepsilon^\prime}(0) \big)
\end{align*}
can be associated with the motion $X^{[\varepsilon^\prime]}$ on the edge $\varepsilon^\prime$ in terms of the equivalence in Theorem \ref{thm:equiv}. Indeed, 
\begin{align*}
    \eta \frac{\Phi(\lambda)}{\lambda} \Big( \lambda R_\lambda f(\cdot, 0) - f(\cdot, 0) \Big) = \sum_{\varepsilon^\prime \in \mathcal{E}} \rho_{\varepsilon^\prime} \, \eta \int_0^\infty e^{-\lambda t} D^\Phi_t \mathbf{E}_0 [f_{\varepsilon^\prime} (X^{[\varepsilon^\prime]}_t)] dt
\end{align*}
and $u \in \mathsf{D}_\phi$ so that
\begin{align*}
    \eta \int_0^\infty e^{-\lambda t} D^\Phi_t u(t, \mathsf{v}) dt = \sum_{\varepsilon^\prime \in \mathcal{E}} \rho_{\varepsilon^\prime} \, \eta \int_0^\infty e^{-\lambda t} D^\Phi_t \mathbf{E}_0 [f_{\varepsilon^\prime} (X^{[\varepsilon^\prime]}_t)] dt
\end{align*}
According with Theorem \ref{thm:NLBVPwithDir}, we have
\begin{align}
    \eta D^\Phi_t \mathbf{E}_0 [f_{\varepsilon^\prime} (X^{[\varepsilon^\prime]}_t)] = \Big(\mathbf{E}_0 [f_{\varepsilon^\prime} (X^{[\varepsilon^\prime]}_t)] \Big)^\prime - c \, \mathbf{E}_0 [f_{\varepsilon^\prime} (X^{[\varepsilon^\prime]}_t)]
\end{align}
where $\mathbf{E}_0 [f_{\varepsilon^\prime} (X^{[\varepsilon^\prime]}_t)] = u_{\varepsilon^\prime} (t, 0)$ is the projection of $u(t, \mathsf{v})$ on the edge $\varepsilon^\prime$. Then, we get
\begin{align}
    \eta D^\Phi_t u(t, 0) = \sum_{\varepsilon^\prime \in \mathcal{E}} \rho_{\varepsilon^\prime} \, \big( u^\prime_{\varepsilon^\prime} (t,0) - c\, u_{\varepsilon^\prime}  (t, 0) \big) \quad t>0.
\end{align}
Recalling that $u \in C((0, \infty) \times \mathsf{S})$, 
\begin{align}
    \sum_{\varepsilon^\prime \in \mathcal{E}} \rho_{\varepsilon^\prime} \,  u_{\varepsilon^\prime}  (t, 0) = \sum_{\varepsilon^\prime \in \mathcal{E}} \rho_{\varepsilon^\prime} \,  u(t, \varepsilon^\prime, 0) = u(t, \mathsf{v}).
\end{align}
In particular, $\eta=m_r/\sigma_r$ and $u \in \mathsf{U}^r_\phi$ with $r \in \{1,\ldots, N\}$. Uniqueness follows from Theorem \ref{thm:NLBVPwithDir} and Theorem \ref{thm:equiv}.

\section{Figures}

\begin{figure}
    \centering
    \includegraphics[width=.7\linewidth]{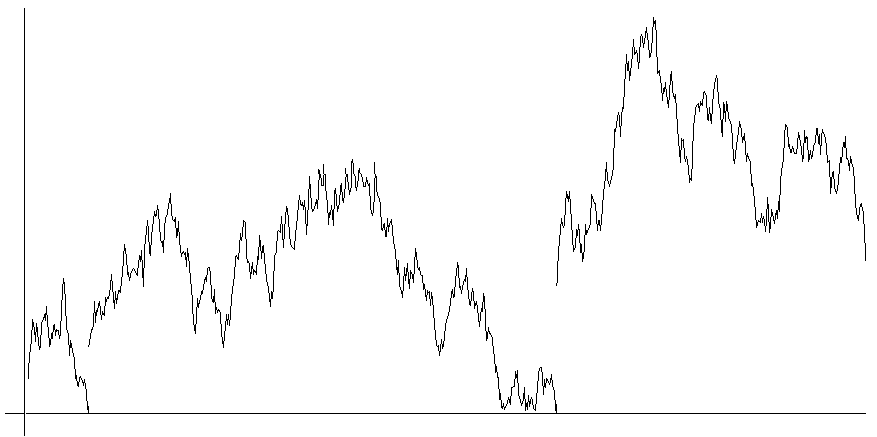}
    \caption{An example of Brownian motion $X^\mu$ with drift $\mu=0$ and non-local condition at the boundary point $\{0\}$ as in \eqref{NLBVPnodes} with $\eta=0$ (then $V_t=t$). The additive part $A\circ \gamma^0_t(X^\mu)$ gives the jump from zero to a random point in $(0,\infty)$. Due to the local time at $\{0\}$, a jump occurs as soon as the process hits the boundary point $\{0\}$. Thus, the process is pushed away from zero. Here the jump is random.}
    \label{fig:BmNLBVPjumps}
\end{figure}

\begin{figure}
    \centering
    \includegraphics[width=.7\linewidth]{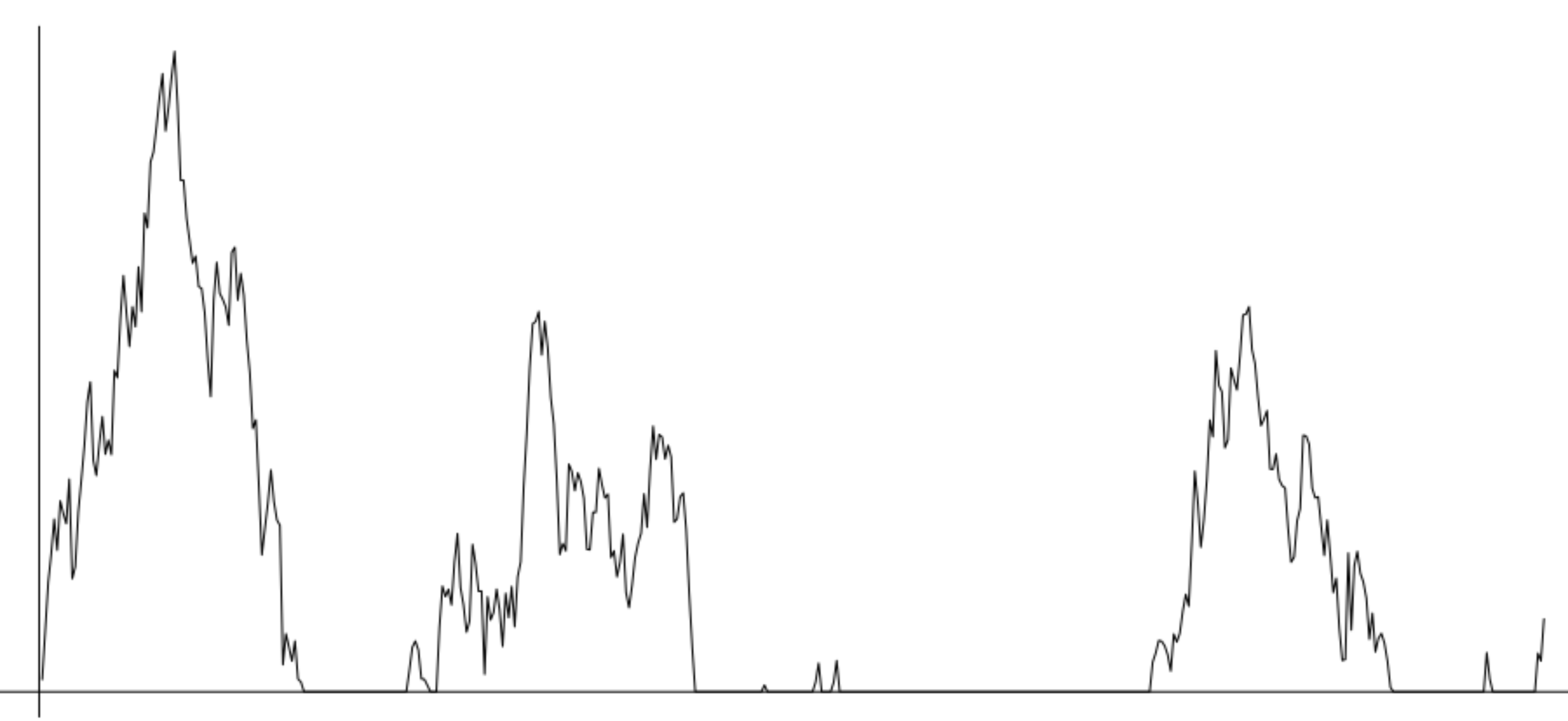}
    \caption{ An example of Brownian motion $X^\mu$ with drift $\mu=0$ under non-local condition at the boundary point $\{0\}$ as in \eqref{NLBVPnodes} with $\eta>0$ and $\Psi=Id$, that is $\mathbf{D}^\Psi_x u = u^\prime$. The behavior at $\{0\}$ is the same as in case of the boundary condition in \eqref{NLBVPedges} where the process is stopped at $\ell>0$. Due to the local time at $\{0\}$, the process is forced to stop at $\{0\}$ for a random holding time.}
    \label{fig:BmNLBVPplateaus}
\end{figure}

\begin{figure}
    \centering
    \includegraphics[width=.7\linewidth]{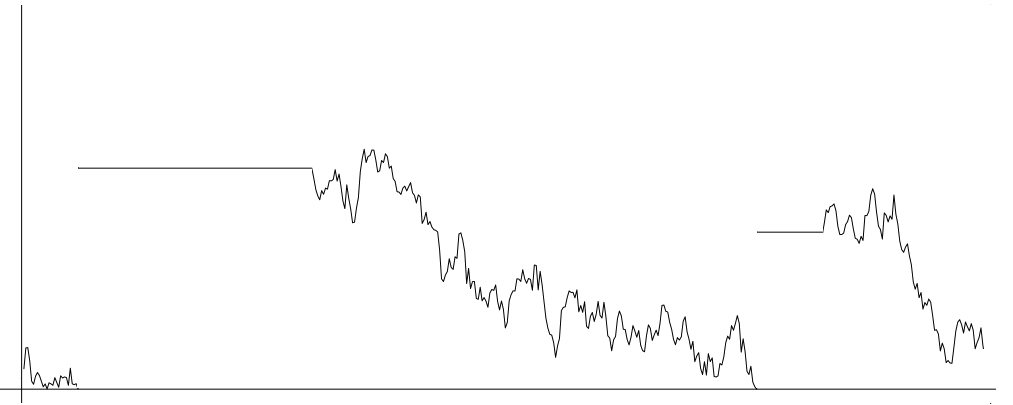}
    \caption{ An example of Brownian motion $X^\mu$ with drift $\mu=0$ under non-local condition at the boundary point $\{0\}$ as in \eqref{NLBVPnodes} with $\eta>0$ and $\sigma>0$. The effects in Figure \ref{fig:BmNLBVPjumps} and Figure \ref{fig:BmNLBVPplateaus} are combined. The process is right-continuous and the holding time is observed immediately after the jump.} 
    \label{fig:BmNLBVPjumpsANDplateaus}
\end{figure}

%

\begin{figure}
    \centering
    \includegraphics[width=.7\linewidth]{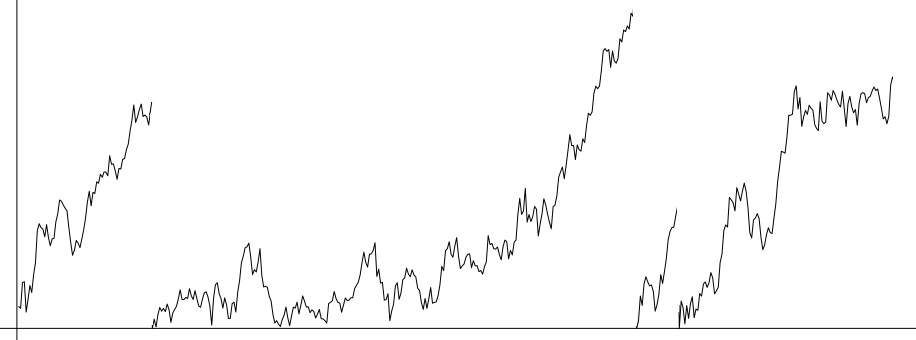}
    \caption{The reflected drifted Brownian motion $X^\mu$ on $[0, \infty)$ with drift $\mu=0.1$. The process starts from zero, it is killed at a random level $\ell>0$, then it starts from zero as a new process. We see that the process may return at the boundary point $\{0\}$ before to be killed.}
    \label{fig:RDBMrandomLevel}
\end{figure}

\begin{figure}
    \centering
    \includegraphics[width=0.5\linewidth]{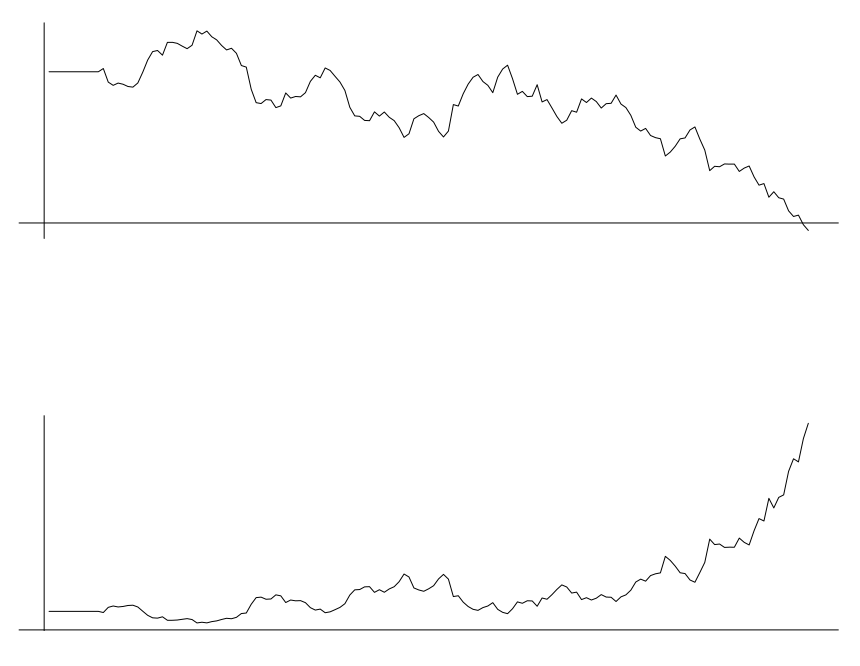}
    \caption{The transformation \eqref{g-transf} with $h=1$ and $k=0.1$. Above the path of the relaxation process $X^\bullet \circ V^{-1}$ after a jump away from $\{0\}$ and below the path of the accumulation process $g(X^\bullet \circ V^{-1})$.}
    \label{fig:g-transf}
\end{figure}

\begin{figure}
    \centering
    \includegraphics[width=0.6\linewidth]{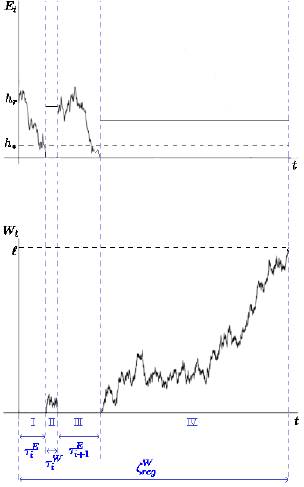}
    \caption{This is the case of two visits of $\{0\}$ for $W$. The processes $X^{[\nu]}$ on $(0, \infty)$ and $X^{[\varepsilon]}$ on $[0, \ell)$ respectively describe $W$ and $E$ in a given region (star graph $\mathsf{S}_{\nu_r}$). The process $X^{[\nu]}$ (picture at the top) starts from the random level $h_r$, it jumps to a new random level as it approaches the boundary point $\{0\}$, then it starts afresh after the holding time. The process $X^{[\varepsilon]}$ (picture at the bottom) starts from zero where it is stuck until continuous reflection on $(0, \ell)$.  It is killed at $\ell$. As the process $X^{[\varepsilon]}$ hits the level $\ell$, then it start as a new process in a new star graph. See the table above (Section \ref{sec:Stat}) for a detailed description. }
    \label{fig:relations}
\end{figure}
\end{appendix}

\newpage

{\bf Funding.}
Fausto Colantoni and Mirko D'Ovidio was supported by MUR under the project PRIN 2022 - 2022XZSAFN: Anomalous Phenomena on Regular and Irregular Domains: Approximating Complexity for the Applied Sciences - CUP B53D23009540006 - PNRR M4.C2.1.1 (\url{https://www.sbai.uniroma1.it/~mirko.dovidio/prinSite/index.html}). The research was also  partially supported by INdAM-GNAMPA and Sapienza University of Rome (Ateneo 2020).

\end{document}